\newtheorem{assumption}{\hspace{0pt}\bf Assumption}
\newtheorem{lemma}{\hspace{0pt}\bf Lemma}
\newtheorem{proposition}{\hspace{0pt}\bf Proposition}
\newtheorem{theorem}{\hspace{0pt}\bf Theorem}
\newtheorem{remark}{\hspace{0pt}\bf Remark}
\newtheorem{definition}{\hspace{0pt}\bf Definition}
\begin{document}

\title{Limited-Memory Greedy Quasi-Newton Method with Non-asymptotic Superlinear Convergence Rate}

\author{Zhan Gao\thanks{Department of Computer Science and Technology, University of Cambridge, Cambridge, UK\qquad\{zg292@cam.ac.uk\}}         \and
        Aryan Mokhtari\thanks{Department of Electrical and Computer Engineering, The University of Texas at Austin, Austin, TX, USA\qquad\{mokhtari@austin.utexas.edu\}}
\and
        Alec Koppel\thanks{JP Morgan Chase AI Research, New York, NY, USA ~ \{alec.koppel@jpmchase.com\}}
}

\date{}

\maketitle

\setstretch{1.02}

\begin{abstract}
Non-asymptotic convergence analysis of quasi-Newton methods has gained attention with a landmark result establishing an explicit local superlinear rate of $\mathcal{O}((1/\sqrt{t})^t)$. The methods that obtain this rate, however, exhibit a well-known drawback: they require the storage of the previous Hessian approximation matrix or 
all past curvature information to form the current Hessian inverse approximation. Limited-memory variants of quasi-Newton methods such as the celebrated L-BFGS alleviate this issue by leveraging a limited window of past curvature information to construct the Hessian inverse approximation. As a result, their per iteration complexity and storage requirement is $\mathcal{O}(\tau d)$ where $\tau \le d$ is the size of the window and $d$ is the problem dimension reducing the $\mathcal{O}(d^2)$ computational cost and memory requirement of standard quasi-Newton methods. However, to the best of our knowledge, there is no result showing a non-asymptotic superlinear convergence rate for any limited-memory quasi-Newton method. In this work, we close this gap by presenting a Limited-memory Greedy BFGS (LG-BFGS) method that can achieve an explicit non-asymptotic superlinear rate. We incorporate displacement aggregation, i.e., decorrelating projection, in post-processing gradient variations, together with a basis vector selection scheme on variable variations, which \emph{greedily} maximizes a progress measure of the Hessian estimate to the true Hessian. Their combination allows past curvature information to remain in a sparse subspace while yielding a valid representation of the full history. Interestingly, our established \textit{non-asymptotic} superlinear convergence rate demonstrates an explicit trade-off between the convergence speed and memory requirement, which to our knowledge, is the first of its kind. Numerical results corroborate our theoretical findings and demonstrate the effectiveness of our method.
\end{abstract}

\newpage

\section{Introduction}

This work focuses on the minimization of a smooth and strongly convex function as follows: 
\begin{equation}\label{eq:ERMproblem1}
		\min_{\bbx} f(\bbx) 
\end{equation} 
where $\bbx\in\mathbb{R}^d$ and $f:\mathbb{R}^d\rightarrow \mathbb{R}$ is twice differentiable. It is well-known that gradient descent and its accelerated variant can 
converge to the optimal solution at a linear rate \citep{nesterov2003introductory}. While this is advantageous from the perspective of simplicity and low complexity of $\mathcal{O}(d)$, the convergence rate for these methods exhibits dependence on the conditioning of the problem. Therefore, when the problem is ill-conditioned, such methods could be slow. Second-order methods such as Newton's method or cubic regularized Newton method are able to address the ill-conditioning issue by leveraging the objective function curvature. Alas, their complexity of $\mathcal{O}(d^3)$ limits their application.  

Quasi-Newton techniques strike a balance between gradient methods and second-order algorithms in terms of having a complexity of $\mathcal{O}(d^2)$, while refining the dependence on the problem conditioning and achieving local superlinear rates  \citep{byrd1987global,gao2019quasi}. They approximate the Hessian or the Newton direction in a variety of ways leading to different algorithms, such as Broyden's method \citep{broyden1965class,broyden1973local,gay1979some}, the celebrated Broyden \citep{broyden1967quasi,broyden1973local}-Fletcher\citep{fletcher1963rapidly}-Goldfarb\citep{goldfarb1970family}-Shanno\citep{shanno1970conditioning} (BFGS) method, as well as Davidon-Fletcher-Powell (DFP) algorithm \citep{fletcher1963rapidly,davidon1991variable} and Symmetric Rank 1 (SR1) method \citep{conn1991convergence}. For some time, it has been known that these methods can achieve asymptotic local superlinear rates \citep{dennis1974characterization}. More recently, the explicit non-asymptotic local rate has been characterized for Broyden class when combined with a greedy basis vector selection \citep{rodomanov2021greedy,lin2021greedy}, and subsequently, the explicit rates have been derived for standard BFGS \citep{rodomanov2021rates,jin2022non} and SR1 \citep{sr1_superlinear}. While these results are promising in clarifying the performance trade-offs between different Hessian approximation methodologies, there are important limitations. In particular, their per-update complexity is quadratic $\mathcal{O}(d^2)$ 
which could be costly in high-dimensional settings. Also, they require to store either the most recent Hessian inverse approximation on 
a memory of $\mathcal{O}(d^2)$ or all past differences of gradients and iterates, known as curvature information, on 
a memory of $\mathcal{O}(td)$. Hence, in the case that the iteration index $t$ becomes large, their storage requirement becomes quadratic in dimension. 

Limited-memory BFGS (L-BFGS) drops all but the past $\tau$ curvature pairs, alleviating these bottlenecks and reducing the iteration complexity and memory requirement to $\mathcal{O}(\tau d)$. Note that $\tau$ is often a constant much smaller than the 
dimension $d$. However, previous analyses of classic L-BFGS 
only guarantee either a sublinear \citep{mokhtari2015global,boggs2019adaptive,yousefian2020stochastic} or a linear \citep{moritz2016linearly} convergence rate, which are no better than gradient descent. Recently, \cite{berahas2022limited} showed that when L-BFGS is combined with a way to project removed information onto the span that remains, it asymptotically converges at a superlinear rate. However, they failed to characterize the region for which the superlinear rate is achieved, the explicit rate of convergence, or the role of memory size. 
Thus, we pose the following questions:
\smallskip
\smallskip
\smallskip
\renewenvironment{quote}{%
  \list{}{%
    \leftmargin1cm   
    \rightmargin\leftmargin
  }
  \item\relax
}
{\endlist}
\begin{quote}
\vspace{-2mm}
   \textit{Can we identify a class of problems for which a fixed-size limited-memory quasi-Newton method may achieve a non-asymptotic superlinear rate? If yes, how would the memory size impact the convergence rate?} 
\end{quote}
\smallskip
\textbf{Contributions.} In this work, we provide an affirmative answer to the first question by synthesizing a Limited-memory Greedy (LG)-BFGS method that employs the greedy basis vector selection 
for the variable variation \citep{rodomanov2021greedy} to construct the Newton direction, but along a subspace whose rank is defined by the memory size. Different from the prior instantiation of displacement aggregation \citep{berahas2022limited,sahu2023modified}, we specify the basis for subspace projection to be the one identified by the aforementioned greedy basis selection. 
This innovative design allows 
achieving the approximation attributes of greedy BFGS but along a low-dimensional subspace only, under appropriate structural assumptions on the curvature of limited-memory Hessian approximation error. These ingredients together enable us to generalize the convergence theory of greedy BFGS to the limited-memory setting and to 
provide the first non-asymptotic local superlinear rate for a quasi-Newton method with 
affordable storage requirements. We further answer the second question by characterizing the role of memory size in the superlinear convergence 
rate of 
LG-BFGS. More precisely, we establish an explicit trade-off between the memory size and the contraction factor that appears in the superlinear rate, and corroborate this trade-off in numerical experiments. 
All proofs are summarized in the appendix.

%
%
%
\smallskip
\noindent {\bf Related Work.}  Alternative approach for  memory reduction is randomized sub-sampling (sketching) of Hessian matrices  \citep{pilanci2017newton,gower2017randomized,gower2019rsn}. This distinct line of inquiry can establish local quadratic rates of convergence when the memory associated with the sub-sampling size achieves a threshold  called effective problem dimension \citep{lacotte2021adaptive}; however, ensuring this condition in practice is elusive, which leads to nontrivial parameter tuning issues in practice \citep{NEURIPS2021_16837163}. Efficient approximation criteria for sketching is an ongoing avenue of research, see e.g., \citep{derezinski2020precise}.

\smallskip
\noindent {\bf Notation.} The weighted norm of $\bbx$ by  a positive defnite matrix $\bbZ$ is denoted by  $\|\bbx\|_\bbZ = \sqrt{\bbx^\top \bbZ\bbx}$.

\section{Problem Setup and Background}

This paper focuses on 
quasi-Newton (QN) algorithms for 
problem \eqref{eq:ERMproblem1}, which aim at approximating the objective function curvature. 
Let $\bbx_t$ be the decision variable, $\nabla f(\bbx_t)$ the gradient, and $\bbB_t$ the Hessian approximation at iteration $t$. The update rule for most QN methods follows the recursion 
\begin{align}\label{eq:Newton}
	\bbx_{t+1}  = \bbx_t - \alpha \bbH_t \nabla f(\bbx_t)
\end{align} 
where $\alpha$ is the step-size and  $\bbH_t$ is the inverse of the Hessian approximation matrix $\bbB_t$. By updating 
$\bbB_t$ with different schema, one may obtain a variety of QN methods \citep{davidon1991variable}. In the case of Broyden-Fletcher-Goldfarb-Shanno (BFGS), we define the variable variation $\bbs_t = \bbx_{t+1} - \bbx_t$ and gradient variation $\bbr_t = \nabla f(\bbx_{t+1}) - \nabla f(\bbx_t)$, and sequentially select the matrix close to the previous estimate, while satisfying the secant condition $\bbB\bbs_t =\bbr_t$. This would lead to the following update 
\begin{align}\label{eq:BFGS}
	\bbB_{t+1} = \bbB_t + \frac{\bbr_t \bbr_t^\top}{\bbr_t^\top \bbs_t} - \frac{\bbB_t \bbs_t \bbs_t^\top \bbB_t^\top}{\bbs_t^\top \bbB_t \bbs_t}.
\end{align}
Given the rank-two structure of the above update, one can use the Sherman-Morrison formula to establish a simple update for the Hessian inverse approximation matrix which is given by  
\begin{align}\label{eq:InverseBFGS}
	\bbH_{t+1} =  \Big(\bbI - \frac{\bbr_t \bbs_t^\top}{\bbs_t^\top \bbr_t}\Big)^\top\  \bbH_t\ \Big(\bbI - \frac{\bbr_t\bbs_t^\top}{\bbs_t^\top \bbr_t}\Big) + \frac{\bbs_t \bbs_t^\top}{\bbs_t^\top \bbr_t}  .
\end{align}
The computational cost of this update 
is $\ccalO(d^2)$ as it requires matrix-vector product calculations. Moreover, it requires a memory of $\ccalO(d^2)$ to store the matrix $\bbH_t$. Using the recursive structure of the update in \eqref{eq:InverseBFGS} one can also implement BFGS at the computational cost and memory of $\ccalO(td)$ which is beneficial 
in the regime that $t\leq d$, while 
this gain disappears as soon as $t>d$. 
More precisely, suppose we are at iteration $t$ and we have access to all previous iteration and gradient differences $\{(\bbs_u, \bbr_u)\}_{u=0}^{t-1}$, referred to as the \emph{curvature pairs}, as well as the initial Hessian inverse approximation matrix $\bbH_0$ that is diagonal. The memory requirement for storing the above information is $\ccalO(td)$. It can be shown that the BFGS descent direction $-\bbH_t \nabla f(\bbx_t)$ can be computed at an overall cost of $\ccalO(td)$ with the well-known two-loop recursion \citep{nocedal1980updating}. Hence, one can conclude that the computational cost and storage requirement of BFGS is $\ccalO(\min(td,d^2))$.

The above observation motivates the use of limited-memory QN methods and in particular limited memory BFGS (L-BFGS), in which we only keep last $\tau$ pairs of curvature information, i.e., $\{(\bbs_u, \bbr_u)\}_{u=t-\tau}^{t-1}$, to construct the descent direction at each iteration. If $\tau <t $ and $\tau<d$ it will reduce the computational cost and storage requirement of BFGS from $\ccalO(\min(td,d^2))$ to $\ccalO(\tau d)$. Indeed, a larger $\tau$ leads to a better approximation of the BFGS descent direction but higher computation and memory, while a smaller $\tau$ reduces the memory and computation at the price of having a less accurate approximation of the BFGS direction. 
A key observation is that for such limited memory updates the standard analysis of the Hessian approximation error does not go through, and hence the theoretical results for standard QN methods (without limited memory) may not hold anymore. To the best of our knowledge, all existing analyses of L-BFGS can only provide linear convergence rates, which is not better than the one for gradient descent \citep{liu1989limited,mokhtari2015global,moritz2016linearly,bollapragada2018progressive,berahas2020robust,berahas2022quasi}. Therefore, the question is how to design a memory-retention procedure so that a non-asymptotic superlinear rate can be achieved in a limited-memory manner. 
Doing so is the focus of the following section.

\section{Limited-Memory Greedy BFGS (LG-BFGS)} 

We propose a new variant of L-BFGS that departs from the classical approach in two key ways: (i)~greedy basis vector selection for the variable variation and (ii)~displacement aggregation on the gradient variation. Greedy updates have been introduced into QN methods as a way to maximize a measure of progress, the trace Lyapunov function, w.r.t. variable variation, which enables to derive an explicit per-step improvement of the Hessian approximation to the true Hessian. The upshot of this modification is an explicit (non-asymptotic) superlinear convergence rate~\citep{rodomanov2021greedy}. Moreover, as we will see, the basis vector selection of the greedy update allows to control the memory growth, which is not present in classical QN schemes and provides a quantitative motivation to port greedy updates \citep{rodomanov2021greedy} into the limited-memory setting.

However, greedy selection alone with respect to variable variation cannot ensure that the resultant L-BFGS retains the correct curvature information, as L-BFGS stores only a limited number of past gradient values. Displacement aggregation \citep{berahas2022limited} provides a way to cull linearly dependent curvature information such that a retained subset gradient variation information is sufficient for ensuring the quality of the Hessian inverse approximation, which can result in asymptotic superlinear rates \citep{berahas2022limited, sahu2023modified}. We synthesize these two key strategies in a unique fashion and present our Limited-memory Greedy BFGS (LG-BFGS) method. 

Let $\bbx_0$ be the initial variable, $\nabla f(\bbx_0)$ the initial gradient, 
and $\bbB_0$ the initial Hessian approximation satisfying $\bbB_0 \succeq \nabla^2\! f(\bbx_0)$, which can be easily satisfied by setting $\bbB_0=L \bbI$.
The LG-BFGS consists of two phases at each iteration: (i) decision variable update and (ii) curvature pair update.

\smallskip
\noindent \textbf{Decision variable update:} At iteration $t$, we have access to the initial Hessian approximation, previously stored curvature pairs, the number of which is at most $\tau$, the current iterate $\bbx_t$ and its gradient $\nabla f(\bbx_t)$. Similar to L-BFGS, we can compute the descent direction of LG-BFGS denoted by $\bbd_t$ at a cost of at most $\mathcal{O}(\tau d)$ and then find the new iterate with stepsize $\alpha$ as 
%
\begin{equation}\label{eq:LGBFGSvariable}
	\bbx_{t+1} = \bbx_t +\alpha \bbd_t.
\end{equation}
%

\smallskip
\noindent \textbf{Curvature pair update:} The update of the curvature pairs happens in two steps as described below.

\smallskip
\noindent \emph{(1) Greedy step.} The greedy step selects the curvature pair greedily to maximize the update progress of the Hessian approximation. Specifically, it selects the variable variation $\bbs_t$ from a vector basis $\{ \bbe_1,\ldots,\bbe_d \}$ by maximizing the ratio $\frac{\bbs^\top \bbB_t\bbs}{\bbs^\top \nabla^2 f(\bbx_{t+1})\bbs}$, where 
$\{ \bbe_1,\ldots,\bbe_d \}$ can be any basis in the space.  For ease of implementation, a reasonable choice is selecting $\bbe_i$ as the vector that its $i$-th entry is the only non-zero value. 
For the purpose of limited memory, we propose an innovative search scheme that 
restricts the vector basis to the subset $\{ \bbe_1,\ldots,\bbe_\tau \}$ of $\tau < d$ vectors, and select $\bbs_t$ greedily as 
\begin{equation}\label{eq:greedyPreparation}
		\bbs_t = \argmax_{\bbs \in \{ \bbe_1,\ldots,\bbe_\tau \}} \frac{\bbs^\top \bbB_t\bbs}{\bbs^\top \nabla^2 f(\bbx_{t+1})\bbs}.
\end{equation}
The greedy step 
selects $\bbs_t$ such that it maximizes the progress of Hessian approximation. For the creation of our curvature pair, instead of using the classic $\bbx_{t+1} - \bbx_t$ as the variable variation, we use the output $\bbs_t$ of \eqref{eq:greedyPreparation}. Note that 
\eqref{eq:greedyPreparation} is equivalent to computing $\tau$ diagonal components of the matrix $\bbB_t$ and $\nabla^2 f(\bbx_{t+1})$, which can be done efficiently at a cost of $\ccalO(\tau d)$, and matrix-vector products $\{\bbB_t \bbe_i\}_{i=1}^\tau$ can be computed with a compact representation at a cost of $\ccalO(\tau^2 d)$ without forming $\bbB_t$; see Section 7.2 in \citep{nocedal2006numerical}. This is due to the fact that $\bbB_t$ only depends on 
$\tau$ curvature pairs. Once $\bbs_t$ is computed, we can efficiently compute $\bbr_t$ by following 
$\bbr_t= \nabla^2 f(\bbx_{t+1}) \bbs_t$. Since $\bbs_t$ is a unit vector with only one non-zero entry, we can simply compute $\bbr_t$ by adding the elements of one column of $\nabla^2 f(\bbx_{t+1}) $. Hence, all computations are $\mathcal{O}(d)$.

\smallskip
\noindent \emph{(2) Displacement step.} Given access to $\hat{\tau}$ previously stored curvature pairs $\ccalP_{t-1} = \{(\bbs_{u}, \bbr_{u})\}_{u=0}^{\hat{\tau}-1}$, where $\hat{\tau}$ is either less than or equal to our memory budget, i.e., $\hat{\tau} \le \tau$, and a new curvature pair $(\bbs_t, \bbr_t)$ from the evaluation of \eqref{eq:greedyPreparation}, the key question is how to select which information to retain for curvature pair update. Displacement aggregation incorporates $(\bbs_t, \bbr_t)$ into $\ccalP_{t-1}$ and formulates an aggregated curvature set $\ccalP_t$ rather than simply removing the oldest pair $(\bbs_0, \bbr_0)$ and adding the latest one $(\bbs_t, \bbr_t)$. Doing so yields a limited-memory Hessian approximation equal to that computed from a full-memory version; hence, reducing the effect of memory reduction and improving the convergence rate. The execution of this retention strategy contains three cases:
 
\smallskip
	{\bf (C1)} {\bf If new variable variation $\bbs_t$ is linearly independent of $\{\bbs_{u}\}_{u=0}^{\hat{\tau}-1}$ in $\ccalP_{t-1}$}, 
 we formulate $\ccalP_t$ by directly adding the new curvature pair as 
	\begin{align} \label{eq:displacementCase1}
		\ccalP_t = \{\ccalP_{t-1}, (\bbs_t,\bbr_t)\}.
	\end{align}	
	
\smallskip
	{\bf (C2)} {\bf If new variable variation $\bbs_t$ is parallel to last variable variation $\bbs_{\hat{\tau}-1}$ in $\ccalP_{t-1}$}, 
    we formulate $\ccalP_t$ by removing the most recently stored 
    $(\bbs_{\hat{\tau}-1}, \bbr_{\hat{\tau}-1})$ and replace it with the new 
    $(\bbs_t,\bbr_t)$ as 
	\begin{align}\label{eq:displacementCase2}
		\ccalP_t = \{(\bbs_{0},\bbr_{0}), \ldots, (\bbs_{{\hat{\tau}-2}}, \bbr_{{\hat{\tau}-2}}), (\bbs_t,\bbr_t)\}.
	\end{align}
    I.e., we have $(\bbs_{\hat{\tau}-1}, \bbr_{\hat{\tau}-1}) = (\bbs_t,\bbr_t)$ in the updated $\ccalP_t$. 
	
\smallskip
	{\bf (C3)} {\bf If new variable variation $\bbs_t$ is parallel to a previously stored variable variation in $\ccalP_{t-1}$}, 
    we formulate $\ccalP_t$ by projecting the information of $(\bbs_{t}, \bbr_{t})$ onto the previous pairs to modify and reweigh $\ccalP_{t-1}$. Specifically, assume $\bbs_t = \bbs_{j}$ for some $0 \le j < \hat{\tau}-1$. Denote by $\bbS_{{j_1}:{j_2}} = [\bbs_{{j_1}} \cdots \bbs_{{j_2}}] \in \mathbb{R}^{d \times (j_2-j_1+1)}$ and $\bbR_{{j_1}:{j_2}}= [\bbr_{{j_1}} \cdots \bbr_{{j_2}}] \in \mathbb{R}^{d \times (j_2-j_1+1)}$ the concatenated variable variation matrix and gradient variation matrix for $0 \le j_1 \le j_2$, and define $(\bbs_{{\hat{\tau}}}, \bbr_{{\hat{\tau}}}) = (\bbs_t, \bbr_t)$ as the new curvature pair. We remove $(\bbs_{j}, \bbr_{j})$, replace $\bbR_{{j+1}:{\hat{\tau}}}$ with a modified $\hat{\bbR}_{{j+1}:{\hat{\tau}}}$, and formulate $\ccalP_t$ as
	\begin{align}\label{eq:displacementCase3}
		\ccalP_t= \{(\bbs_{0}, \bbr_{0}),\ldots,(\bbs_{{j-1}}, \bbr_{{j-1}}), (\bbs_{{j+1}}, \hat{\bbr}_{{j+1}}),\ldots, (\bbs_{{\hat{\tau}}}, \hat{\bbr}_{{\hat{\tau}}})\} 
	\end{align}
	such that the descent direction in \eqref{eq:LGBFGSvariable} computed from $\ccalP_t$ is equal to that from $\{\ccalP_{t-1}, (\bbs_{\hat{\tau}},\bbr_{\hat{\tau}})\}$. To do so, we compute $\hat{\bbR}_{{j+1}:{\hat{\tau}}} \!=\! [\hat{\bbr}_{{j+1}} \cdots \hat{\bbr}_{{\hat{\tau}}}]$ following the aggregation procedure in \citep{berahas2022limited} as
	\begin{align}\label{eq:state_aggregation_projection}
		\hat{\bbR}_{{j+1}:{\hat{\tau}}} = (\bbH_{0:{j-1}})^{-1} \bbS_{{j+1}:{\hat{\tau}}}[\bbA~\textbf{0}] + \bbr_{j} [\bbb~\bb0] + \bbR_{{j+1}:{\hat{\tau}}}
	\end{align}
	where $\bbH_{0:{j-1}}$ is the Hessian inverse approximation computed with the limited-memory strategy, initialized at $\bbH_0$ and updated with $j$ curvature pairs $\{(\bbs_{u}, \bbr_{u})\}_{u=0}^{j-1}$, and $\bbb \in \mathbb{R}^{\hat{\tau}-j-1}$ and $\bbA \in \mathbb{R}^{(\hat{\tau}-j)\times(\hat{\tau}-j-1)}$ are unknowns that can be computed by Algorithm 4 in \citep{berahas2022limited}. Note that $\bbH_{0:{j-1}}^{-1} \bbS_{{j+1}:{\hat{\tau}}}$ in (\ref{eq:state_aggregation_projection}) can be computed without forming $\bbH_{0:{j-1}}$ \citep{berahas2022limited}.

\smallskip
We remark that the number of curvature pairs $\hat{\tau}$ in $\ccalP_t$ is always bounded by the memory size $\tau$, i.e., $\hat{\tau} \le \tau$ for all iterations $t$. We explain this fact from two aspects:

\smallskip
\noindent \emph{(i)} (C1) only happens when $\hat{\tau} < \tau$. Specifically, the variable variations $\{\bbs_{u}\}_{u=0}^{\hat{\tau}-1}$ in $\ccalP_{t-1}$ are independent 
because any dependent variable variation will not be included into $\ccalP_{t-1}$ according to (C2) and (C3). If $\hat{\tau} \ge \tau$, the new variable variation $\bbs_t$ cannot be linearly independent of $\{\bbs_{u}\}_{u=0}^{\hat{\tau}-1}$ in $\ccalP_{t-1}$ because all variable variations are selected from the subset $\{ \bbe_1,\ldots,\bbe_\tau \}$ of size $\tau$ [cf. \eqref{eq:greedyPreparation}]. Thus, the number of curvature pairs in $\ccalP_t$ is bounded by the memory size, i.e.,  $\hat{\tau}+1 \le \tau$. 

\smallskip
\noindent \emph{(ii)} (C2) and (C3) do not increase the number of curvature pairs from $\ccalP_{t-1}$ to $\ccalP_t$ and thus, the number of curvature pairs in $\ccalP_t$ is bounded by the memory size, i.e., $\hat{\tau} \le \tau$. 

\smallskip
\noindent Therefore, LG-BFGS stores at most $\tau$ curvature pairs 
and solves problem \eqref{eq:ERMproblem1} with limited memory. 

\begin{remark}[Computational Cost]
While the displacement step seems computationally costly, it can be implemented efficiently. 
For the case selection, it is computationally efficient because variable variations $\{\bbs_{0},\ldots, \bbs_{{\hat{\tau}-1}}\}$ are selected from the same subset $\{\bbe_1,\ldots,\bbe_\tau\}$ instead of distributed arbitrarily in  $d$-dimensional space and thus, the computational cost is of $\ccalO(\tau)$. For the modified $\hat{\bbR}_{{j+1}:\hat{\tau}}$, the computational cost is of $\ccalO(\tau^2 d + \tau^4)$ as analyzed in \citep{berahas2022limited}. Hence, the overall cost is comparable to the cost of L-BFGS which is  $\ccalO(4\tau d)$ \citep{liu1989limited}. 
\end{remark} 

\begin{algorithm}[t!]
	\caption{Limited-memory Greedy BFGS (LG-BFGS) Method}\label{algorithm:LG-BFGS}
	\textbf{Initialization:} {Loss function $f(\bbx)$, initial decision vector $\bbx_0$ and Hessian inverse approximation $\bbH_0$}\\
	\textbf{for} {$t = 0,1,\ldots,T$}~~\textbf{do}\\
		1. Compute the gradient $\nabla f(\bbx_{t})$ and update the decision variable $\bbx_{t+1}$ as in \eqref{eq:LGBFGSvariable}\\
		2. Compute $\phi_t \!=\! \| \bbx_{t+1}\!-\!\bbx_t\|_{\nabla^2\! f(\bbx_t)}$, the scale constant $\psi_t = 1+C_M \phi_t$, the corrected initial $\bbH_0 = \psi_{t}^{-1} \bbH_{0}$ and the scaled curvature pairs $\{\bbs_u, \tilde{\bbr}_u\}_{u=0}^{\hat{\tau}-1}$ with $ \tilde{\bbr}_u = \psi_{t}\bbr_u$\\
		3. Select the greedy curvature pair $(\bbs_t, \bbr_t)$ as in \eqref{eq:greedyPreparation} with the corrected curvature pairs $\{\bbs_u, \tilde{\bbr}_u\}_{u=0}^{\hat{\tau}-1}$\\
		4. Update the historical curvature pairs $\ccalP_{t}$ as in \eqref{eq:displacementCase1}-\eqref{eq:displacementCase3}
\end{algorithm}

\subsection{Correction Strategy}\label{sec:correction}

The greedy step 
is designed to maximize 
the progress of the BFGS update in terms of the trace function $\sigma(\nabla^2 f(\bbx_{t+1}), \bbB_{t+1}\!) \!=\! \text{Tr}(\nabla^2 f(\bbx_{t+1})^{-1} \bbB_{t+1} \!)- d$, which captures the difference between positive definite matrices. For our analysis, we require this expression to stay positive and for that reason, we need the following condition ${\bbB}_{t+1} \succeq \nabla^2 f(\bbx_{t+1})$. Note that this condition may not hold even if $\bbB_t \succeq \nabla^2 f(\bbx_{t})$, which requires developing a correction strategy to overcome this issue. Specifically, define $\phi_t \!:=\! \| \bbx_{t+1}-\bbx_{t}\|_{\nabla^2\! f(\bbx_t)} $ and 
the corrected matrix $\hat{\bbB}_t \!=\! (1\!+\! \phi_t C_M)\bbB_t \succeq \nabla^2 f(\bbx_{t+1})$ as a scaled version of $\bbB_t$ where $C_M$ is the strongly self-concordant constant; see \eqref{eq:SelfConcordant}. By updating $\bbB_{t+1}$ with the corrected $\hat{\bbB}_t$, we can ensure that $\bbB_{t+1}\succeq \nabla^2 f(\bbx_{t+1})$ is satisfied \citep{rodomanov2021greedy}. However, in the limited memory setting, we do not have access to the Hessian approximation matrix explicitly and thus need to apply this scaling to the curvature pairs. This can be done 
by scaling the gradient variation as $\tilde{\bbr}_t= (1 + \phi_t C_M) \bbr_t$. In fact, this means that we need to use the corrected gradient variation $\tilde{\bbr}_t$ instead of $\bbr_t$ for the displacement step. 

Next, we formally prove that using the scaled gradient variation ${\tilde{\bbr}_t}$ would equivalently lead to scaling the corresponding Hessian approximation matrices. 

\begin{proposition}\label{prop:correctStep}
	Consider the BFGS update in \eqref{eq:BFGS}. Let $\bbB_0$ be the initial Hessian approximation, $\psi_{t} = 1+ \phi_{t} C_M$ be the scaling parameter, and $\{(\bbs_u,\bbr_u)\}_{u=0}^{t-1}$ be the curvature pairs. Define the modified curvature pairs $\{(\bbs_u,\tilde{\bbr}_u)\}_{u=0}^{t-1}$ as $\tilde{\bbr}_u = \psi_{t} \bbr_u$ for all $u=0,\ldots,t-1$. At iteration $t$, let $\hat{\bbB}_t$ be the corrected Hessian approximation as
	\begin{align}\label{eq:correctedBFGS}
		\hat{\bbB}_t = \psi_{t} \text{\rm BFGS}(\bbB_{t-1}, \bbs_{t-1}, \bbr_{t-1}),~\ldots, ~\bbB_1 = \text{\rm BFGS}(\bbB_{0}, \bbs_{0}, \bbr_{0})
	\end{align}
 where $\text{\rm BFGS}(\cdot)$ represents the operation in \eqref{eq:BFGS}. Then, given $\tilde{\bbB}_{0} = \psi_{t} \bbB_0$, it holds that 
	\begin{align}\label{eq:alterCorrectedBFGS}
		\hat{\bbB}_t = \text{\rm BFGS}(\tilde{\bbB}_{t-1}, \bbs_{t-1}, \tilde{\bbr}_{t-1}),~\ldots, ~\tilde{\bbB}_1 = \text{\rm BFGS}(\tilde{\bbB}_{0}, \bbs_{0}, \tilde{\bbr}_{0}).
	\end{align}
\end{proposition}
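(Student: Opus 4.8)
The plan is to reduce the entire statement to a single algebraic identity expressing that one BFGS step is positively homogeneous of degree one under the simultaneous scaling of its matrix argument and its gradient-variation argument, and then to propagate this identity through the update chain by induction. Concretely, I would first isolate the one-step claim that for any positive scalar $\psi$, any $\bbB \succ 0$, and any curvature pair $(\bbs,\bbr)$ with $\bbr^\top \bbs > 0$,
\begin{align}\label{eq:homogeneity}
	\text{\rm BFGS}(\psi \bbB, \bbs, \psi \bbr) = \psi\, \text{\rm BFGS}(\bbB, \bbs, \bbr).
\end{align}

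To verify \eqref{eq:homogeneity} I would substitute $\psi \bbB$ and $\psi \bbr$ directly into the definition \eqref{eq:BFGS} and track the powers of $\psi$ in each of the three terms. The leading term $\psi \bbB$ is already linear in $\psi$. For the two rank-one corrections, the numerators are quadratic in the scaled arguments — $(\psi\bbr)(\psi\bbr)^\top = \psi^2 \bbr\bbr^\top$ and $(\psi\bbB)\bbs\bbs^\top(\psi\bbB)^\top = \psi^2 \bbB\bbs\bbs^\top\bbB^\top$ — while the denominators are linear, $(\psi\bbr)^\top \bbs = \psi\, \bbr^\top\bbs$ and $\bbs^\top(\psi\bbB)\bbs = \psi\, \bbs^\top\bbB\bbs$. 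Hence each correction term carries an overall factor $\psi^2/\psi = \psi$, so that every term in the updated matrix scales by exactly $\psi$ and \eqref{eq:homogeneity} follows. This is the only real computation in the proof, and it is elementary.

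With \eqref{eq:homogeneity} in hand, I would prove by induction on $u \in \{0,1,\ldots,t\}$ that $\tilde{\bbB}_u = \psi_t \bbB_u$, where the $\bbB_u$ are the unscaled iterates generated from $\bbB_0$ in \eqref{eq:correctedBFGS} and the $\tilde{\bbB}_u$ are the iterates generated from $\tilde{\bbB}_0 = \psi_t \bbB_0$ with the scaled pairs $(\bbs_u, \tilde{\bbr}_u)$ in \eqref{eq:alterCorrectedBFGS}. The base case $u=0$ is the definition $\tilde{\bbB}_0 = \psi_t \bbB_0$. For the inductive step, assuming $\tilde{\bbB}_{u-1} = \psi_t \bbB_{u-1}$ and using $\tilde{\bbr}_{u-1} = \psi_t \bbr_{u-1}$, I would compute
\begin{align}
	\tilde{\bbB}_u = \text{\rm BFGS}(\psi_t \bbB_{u-1}, \bbs_{u-1}, \psi_t \bbr_{u-1}) = \psi_t\, \text{\rm BFGS}(\bbB_{u-1}, \bbs_{u-1}, \bbr_{u-1}) = \psi_t \bbB_u,
\end{align}
where the middle equality is exactly \eqref{eq:homogeneity}. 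Evaluating at $u=t$ gives $\tilde{\bbB}_t = \psi_t \bbB_t = \hat{\bbB}_t$, which is the desired conclusion \eqref{eq:alterCorrectedBFGS}.

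I do not anticipate a genuine obstacle: the whole argument hinges on the degree-one homogeneity \eqref{eq:homogeneity}, whose verification amounts to cancelling a quadratic numerator against a linear denominator in each rank-one term. The only point requiring a little care is to confirm that $\psi_t$ is a single fixed scalar across the entire chain — it depends on the outer index $t$, not on the inner loop variable $u$ — so that the same factor can be pulled out at every step of the induction; this is consistent with the definitions of $\tilde{\bbr}_u$ and $\tilde{\bbB}_0$ given in the statement.
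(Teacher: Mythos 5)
Your proposal is correct and follows essentially the same route as the paper: the paper also verifies the degree-one homogeneity of the BFGS update under joint scaling of $\bbB$ and $\bbr$ by direct substitution into \eqref{eq:BFGS}, and then propagates $\tilde{\bbB}_k = \psi_t \bbB_k$ through the chain by induction. The only cosmetic difference is that you extract the homogeneity identity as a standalone lemma, whereas the paper inlines the same computation into the base case and inductive step.
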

\noindent Proposition \ref{prop:correctStep} states that we can transform the correction strategy of the Hessian approximation $\bbB_t$ [cf. \eqref{eq:correctedBFGS}] to the corresponding correction strategy of the gradient variation $\bbr_t$. This indicates that we can incorporate the correction strategy into the displacement step by scaling the gradient variation $\bbr_t$ and maintain the remaining unchanged. Algorithm \ref{algorithm:LG-BFGS} formally summarizes the corrected LG-BFGS method which alleviates the requirement of $\bbB_{t+1} \succeq \nabla^2 f(\bbx_{t+1})$.

\section{Convergence}\label{sec:convergence}

In this section, we conduct convergence analysis to show that LG-BFGS can achieve an explicit superlinear rate depending on the memory size $\tau$. 
This result is salient as it is the first non-asymptotic superlinear rate established by a limited-memory quasi-Newton method. Moreover, our result identifies how memory size affects convergence speed, which provides an explicit trade-off between these two factors. 
To proceed, we introduce some technical conditions on the objective function $f$. 
%
\begin{assumption} \label{Assump:strongConvex}
	The function $f$ is $\mu$-strongly convex and its gradient is $L$-Lipschitz continuous. 
\end{assumption}
\begin{assumption}\label{Assump:LipschitzHessian}
	The Hessian $\nabla^2 f$ is $C_L$-Lipschitz continuous, i.e.,  for all $\bbx_1,\bbx_2 \in \mathbb{R}^d$, we have $\| \nabla^2 f(\bbx_1) - \nabla^2 f(\bbx_2)\| \le C_L \| \bbx_1 - \bbx_2 \|$.
\end{assumption}
\noindent Assumptions \ref{Assump:strongConvex} and  \ref{Assump:LipschitzHessian} are 
customary in the analysis of QN methods \citep{nocedal2006numerical}[Ch. 6]. They also together imply that  $f$ is strongly self-concordant with constant $C_M = C_L/\mu^{3/2}$, i.e., 
\begin{equation}\label{eq:SelfConcordant}
		\begin{split}
			\nabla^2 f(\bbx_1) - \nabla^2 f(\bbx_2) \preceq C_M \| \bbx_1 - \bbx_2 \|_{\nabla^2\! f(\bby)} \nabla^2 f(\bbz),~\text{for~any}~ \bbx_1,\bbx_2,\bby,\bbz \in \mathbb{R}^d.
		\end{split}
\end{equation}%
%
With these preliminaries in place, we formally present the convergence results. Specifically, we consider the convergence criterion as the operator norm that has been commonly used in \citep{rodomanov2021greedy,lin2021greedy,rodomanov2021rates,jin2022non}
\begin{align}
    \lambda_f(\bbx_t) := \|\nabla f(\bbx_t)\|_{\nabla^2 f(\bbx_t)^{-1}}
\end{align}
and show the sequence $\{\lambda_f(\bbx_t)\}_{t=1}^T$ generated by LG-BFGS can converge to zero at an explicit superlinear rate. To do so, we first state the linear convergence of $\{\lambda_f(\bbx_t)\}_{t=1}^T$ for LG-BFGS. 

\begin{proposition}\label{prop:linearConvergence}
	Consider LG-BFGS for problem \eqref{eq:ERMproblem1} satisfying Assumptions \ref{Assump:strongConvex}-\ref{Assump:LipschitzHessian} with $\mu$, $L$ and $C_M$. Let $\epsilon(\mu, L, C_M) =  \mu \ln (3/2) / (4 L C_M)$. Then, if $\bbx_0$ satisfies $\lambda_f(\bbx_0) \le \epsilon(\mu, L, C_M)$, we have
	\begin{align}\label{eq:thmlinear}
		\lambda_f(\bbx_t) \le \left(1 - \frac{\mu}{2 L}\right)^t \lambda_f(\bbx_0).
	\end{align}
\end{proposition}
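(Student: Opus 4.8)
The plan is to reduce the analysis to that of corrected (full-memory) greedy BFGS, since for the purpose of bounding $\lambda_f(\bbx_t)$ the only things that matter are the effective Hessian approximation $\bbB_t$ driving the step $\bbx_{t+1} = \bbx_t - \bbB_t^{-1}\nabla f(\bbx_t)$ and the two-sided spectral relation $\nabla^2 f(\bbx_t)\preceq\bbB_t\preceq\Lambda\bbI$, not how $\bbB_t$ is stored nor which basis vector the greedy rule selects. I would first note that the specific limited-memory bookkeeping is immaterial for the \emph{linear} rate: displacement aggregation guarantees the retained pairs yield the same direction (hence the same effective $\bbH_t=\bbB_t^{-1}$) as keeping full history, and Proposition~\ref{prop:correctStep} shows that scaling the gradient variations by $\psi_t$ realizes the corrected matrix $\hat\bbB_t=\psi_t\bbB_t\succeq\nabla^2 f(\bbx_{t+1})$. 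I would therefore record, by induction using the correction step and self-concordance \eqref{eq:SelfConcordant}, the lower invariant $\nabla^2 f(\bbx_t)\preceq\bbB_t$ for all $t$, which is exactly what makes $\lambda_f$ a valid Lyapunov function.

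Next I would prove the single-step contraction assuming the sandwich $\nabla^2 f(\bbx_t)\preceq\bbB_t\preceq\Lambda\bbI$. Writing $\bbp_t=\bbx_{t+1}-\bbx_t=-\bbB_t^{-1}\nabla f(\bbx_t)$ and applying the fundamental theorem of calculus gives $\nabla f(\bbx_{t+1})=(\bbI-\nabla^2 f(\bbx_t)\bbB_t^{-1})\nabla f(\bbx_t)+\bbw_t$ with remainder $\bbw_t=\int_0^1[\nabla^2 f(\bbx_t+\theta\bbp_t)-\nabla^2 f(\bbx_t)]\bbp_t\,d\theta$. Measuring in the $\nabla^2 f(\bbx_t)^{-1}$ norm, the matrix $\bbM_t=\nabla^2 f(\bbx_t)^{1/2}\bbB_t^{-1}\nabla^2 f(\bbx_t)^{1/2}$ has eigenvalues in $[\mu/\Lambda,1]$ by the sandwich, so the leading term contracts by $\|\bbI-\bbM_t\|\le 1-\mu/\Lambda$ times $\lambda_f(\bbx_t)$; the remainder $\bbw_t$ is of order $C_L\|\bbp_t\|^2$ and, because $\phi_t=\|\bbp_t\|_{\nabla^2 f(\bbx_t)}\le\lambda_f(\bbx_t)$ (a direct consequence of $\bbB_t\succeq\nabla^2 f(\bbx_t)$), contributes a term of order $\lambda_f(\bbx_t)^2$. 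A final change of metric from $\nabla^2 f(\bbx_{t+1})^{-1}$ to $\nabla^2 f(\bbx_t)^{-1}$, again via \eqref{eq:SelfConcordant}, costs a factor $(1+\phi_t C_M)$ close to one. Collecting these and using $\lambda_f(\bbx_t)\le\epsilon$ to absorb the quadratic remainder, the per-step factor drops to $1-\mu/(2L)$ once $\Lambda$ is pinned near $L$.

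The crux, and the main obstacle, is establishing the uniform upper bound $\bbB_t\preceq\Lambda\bbI$ with $\Lambda$ only a small multiple of $L$ (I anticipate $\Lambda=\tfrac32 L$). This cannot be obtained in one shot, because the correction multiplies $\bbB_t$ by $\psi_t=1+\phi_t C_M\ge 1$ at every step; instead the bound and the rate must be proved together in a single induction. Assuming $\lambda_f(\bbx_u)\le(1-\mu/(2L))^u\lambda_f(\bbx_0)$ for $u<t$ yields, via $\phi_u\le\lambda_f(\bbx_u)$ and the geometric sum, $\sum_{u<t}\phi_u\le\tfrac{2L}{\mu}\lambda_f(\bbx_0)\le\tfrac{2L}{\mu}\epsilon=\tfrac{\ln(3/2)}{2C_M}$, which is precisely the threshold encoded in $\epsilon$: the accumulated scaling, entering squared through the correction factor and the self-concordant comparison of $\nabla^2 f(\bbx_u)$ with $\nabla^2 f(\bbx_{u+1})$, then satisfies $\prod_{u<t}\psi_u^2\le\exp(2C_M\sum_{u<t}\phi_u)\le 3/2$, so $\bbB_t\preceq\tfrac32 L\bbI$ starting from $\bbB_0=L\bbI$. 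Feeding $\Lambda=\tfrac32 L$ back into the single-step bound closes the induction. The delicate points to verify are that the greedy BFGS update itself does not inflate the top eigenvalue beyond the Hessian-driven scale (the secant relation $\bbr_t=\nabla^2 f(\bbx_{t+1})\bbs_t$ makes the rank-one addition have operator norm at most $\lambda_{\max}(\nabla^2 f)\le L$), and that the compounding of self-concordance factors is tracked exactly, which is why $\epsilon$ carries the $\ln(3/2)$ and the $1/(4LC_M)$ scaling.
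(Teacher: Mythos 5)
Your proposal is correct and follows essentially the same route as the paper: reduce LG-BFGS to full-memory greedy BFGS via the displacement-aggregation equivalence and Proposition~\ref{prop:correctStep}, then run a single induction that couples the linear rate $\lambda_f(\bbx_t)\le(1-\mu/(2L))^t\lambda_f(\bbx_0)$ with a uniform sandwich on $\bbB_t$, using the threshold $\epsilon$ exactly as you do to cap the accumulated correction factors $\prod_u\psi_u^2$ at $3/2$. The paper states the upper bound in Hessian-relative form $\bbB_t\preceq\eta_t\nabla^2 f(\bbx_t)$ with $\eta_t\le\tfrac{3L}{2\mu}$ and cites the one-step lemmas of \citep{rodomanov2021greedy} rather than re-deriving the Taylor expansion, but these are equivalent to your $\Lambda\bbI$ formulation up to strong convexity and yield the same $1-\tfrac{2\mu}{3L}$ intermediate contraction.

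One step of your argument is under-justified as written: bounding the operator norm of the rank-one addition $\bbr_t\bbr_t^\top/(\bbr_t^\top\bbs_t)$ by $L$ does not by itself prevent $\lambda_{\max}(\bbB_t)$ from growing (it would only give $\lambda_{\max}(\bbB_{t+1})\le\lambda_{\max}(\bbB_t)+L$, which compounds). What is actually needed is the Loewner monotonicity of the BFGS update with exact curvature pairs, $\nabla^2 f(\bbx_{+})\preceq\mathrm{BFGS}(\hat\bbB,\bbs,\nabla^2 f(\bbx_{+})\bbs)\preceq\hat\bbB$, which is precisely what the paper's Lemma~\ref{lemma:3} (Lemma~4.4 of \citep{rodomanov2021greedy}) supplies; with that substitution your induction closes as claimed.
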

\noindent Proposition \ref{prop:linearConvergence} states that the weighted gradient norm $\lambda_f(\bbx_t)$ of the iterates of LG-BFGS converges to zero at least at a linear rate. This is a local linear convergence result, where the initial variable $\bbx_0$ is assumed close to the solution, i.e., $\lambda_f(\bbx_0) \le \epsilon(\mu, L, C_M)$. 
Before proceeding with our superlinear analysis, we define the \textit{relative condition number} of basis $\{\bbe_i\}_{i=1}^d$ with respect to any matrix $\bbE$.

%
\begin{definition}\label{def:relativeCondition}[Relative Condition Number]
	Consider a matrix $\bbE$ and a vector basis $\{\bbe_i\}_{i=1}^d$. The relative condition number of each vector $\bbe_i$ with respect to $\bbE$ is defined as
\begin{equation}\label{eq:relativeCondition}
		\beta(\bbe_i) := \frac{\max_{1 \le k \le d} \bbe_k^\top \bbE \bbe_k}{ \bbe_i^\top \bbE \bbe_i}.
	\end{equation}
Moreover, we define the minimal relative condition number of the subset $\{\bbe_i\}_{i=1}^\tau$ as 
 $$
 \beta_\tau :=\beta(\{\bbe_i\}_{i=1}^\tau) = \min_{1 \le i \le \tau} \beta(\bbe_i).
 $$
\end{definition}
The relative condition number in Definition \ref{def:relativeCondition} characterizes how large the linear operation $\bbE \bbx$ can change for a small change on the input $\bbx$ along the direction $\bbe_i$ for $i=1,\ldots,d$. Indeed, based on the definition in \eqref{eq:relativeCondition}, this value is larger than or equal to $1$. It can be also verified that the maximum possible value for the relative condition number is $\lambda_d/\lambda_1$, where $\lambda_d$ and $\lambda_1$ are the maximal and minimal eigenvalues of $\bbE$ (we assume $\lambda_1\le \cdots \le \lambda_d$). If particularizing 
$\{\bbe_i\}_{i=1}^d$ to the eigenvectors $\{\bbv_i\}_{i=1}^d$ of $\bbE$, i.e., $\{\bbe_i = \bbv_i\}_{i=1}^d$, the relative condition number of $\bbe_1$ with respect to $\bbE$ is equivalent to the condition number of $\bbE$, i.e., $\lambda_d/\lambda_1$, and the relative condition number of $\bbe_d$ is 1. 

Moreover, $ \beta(\{\bbe_i\}_{i=1}^\tau)$, abbreviated as $\beta_\tau$, measures the minimum possible relative condition number for the elements in the subset $\{\bbe_i\}_{i=1}^\tau$. 
When increasing the size of the set, i.e., increasing $\tau$, the minimal relative condition number decreases and approaches 1. In fact, for $\tau=d$ we obtain $\beta_d=1$. We next use this definition to characterize the progress of Hessian approximation in LG-BFGS. We follow \citep{rodomanov2021greedy} to measure the Hessian approximation error with the trace metric $\sigma(\nabla^2 f(\bbx), \bbB) \!=\! {\rm Tr}(\nabla^2 f(\bbx)^{-1} \bbB) \!-\! d$, which captures the distance between $\nabla^2 f(\bbx)$ and $\bbB$.
\begin{proposition}\label{prop:HessianUpdate}
    Suppose Assumptions \ref{Assump:strongConvex}-\ref{Assump:LipschitzHessian} hold and  $\nabla^2 f(\bbx) \preceq \bbB$. Let $\bbx_+$ be the updated decision variable by \eqref{eq:LGBFGSvariable}, $\phi = \| \bbx_+ - \bbx \|_{\nabla^2 f(\bbx)}$ the weighted update difference, $\hat{\bbB} = (1 + \phi C_M) \bbB$ the corrected Hessian approximation, $(\bbs, \bbr)$ the greedily selected curvature pair by \eqref{eq:greedyPreparation}, $\bbB_+ = \text{\rm BFGS}(\hat{\bbB}, \bbs, \bbr)$ the updated Hessian approximation, and $\tau$ the memory size. Then, we have
\begin{equation}\label{eq:propHessianUpdate}
		\sigma(\nabla^2 f(\bbx_+), \bbB_+)
        \le \bigg(1 - \frac{\mu}{\beta_\tau d L}\bigg)(1 + \phi C_M)^2 \Big(\sigma(\nabla^2 f(\bbx), \bbB) + \frac{2 d \phi C_M}{1+ \phi C_M}\Big).
	\end{equation} 
\end{proposition}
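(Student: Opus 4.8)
The plan is to split the one-step estimate into two pieces: a per-step greedy-progress bound for the BFGS step applied to the corrected matrix $\hat{\bbB}$, measured against $\bbA := \nabla^2 f(\bbx_+)$, and a self-concordance transfer that rewrites the resulting $\sigma(\bbA,\hat{\bbB})$ in terms of $\sigma(\nabla^2 f(\bbx),\bbB)$. The preliminary step is to check that the correction is valid, i.e. that $\hat{\bbB}=(1+\phi C_M)\bbB \succeq \bbA$, so that $\sigma(\bbA,\hat{\bbB})\ge 0$ and the trace identities below are meaningful: applying \eqref{eq:SelfConcordant} to the pair $\bbx,\bbx_+$ gives $\nabla^2 f(\bbx_+)\preceq(1+\phi C_M)\nabla^2 f(\bbx)$, and combining with the hypothesis $\nabla^2 f(\bbx)\preceq\bbB$ yields $\bbA\preceq(1+\phi C_M)\bbB=\hat{\bbB}$. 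Note also that the greedy rule \eqref{eq:greedyPreparation} is invariant under the positive scaling relating $\bbB$ and $\hat{\bbB}$, so the selected direction $\bbs$ and the secant pair $\bbr=\bbA\bbs$ are unchanged.

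For the greedy-progress piece, I would first record the exact decrease of the trace potential under a BFGS step whose secant pair is the exact Hessian action $\bbr=\bbA\bbs$. Because $\bbr^\top\bbA^{-1}\bbr=\bbs^\top\bbA\bbs=\bbs^\top\bbr$, the two rank-one terms in \eqref{eq:BFGS} collapse to the identity $\sigma(\bbA,\bbB_+)=\sigma(\bbA,\hat{\bbB})-\theta$ with $\theta=\frac{\bbs^\top\hat{\bbB}\bbA^{-1}\hat{\bbB}\bbs}{\bbs^\top\hat{\bbB}\bbs}-1$. Writing $\bbG:=\bbA^{-1/2}\hat{\bbB}\bbA^{-1/2}\succeq\bbI$ and $\bbw:=\bbA^{1/2}\bbs$, a Cauchy--Schwarz inequality applied to $\bbG\bbw$ and $\bbw$ gives $\frac{\bbw^\top\bbG^2\bbw}{\bbw^\top\bbG\bbw}\ge\frac{\bbw^\top\bbG\bbw}{\bbw^\top\bbw}$, hence $\theta\ge\frac{\bbs^\top\hat{\bbB}\bbs}{\bbs^\top\bbA\bbs}-1=:\rho$. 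Thus the per-step decrease is at least $\rho$, and since the greedy rule maximises exactly $\bbs^\top\hat{\bbB}\bbs/\bbs^\top\bbA\bbs=1+\rho$ over the retained basis, the decrease is at least $\max_{1\le i\le\tau}\rho_i$ with $\rho_i=\bbe_i^\top(\hat{\bbB}-\bbA)\bbe_i/\bbe_i^\top\bbA\bbe_i$.

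The main obstacle is to lower-bound this restricted maximum by a $\beta_\tau$-discounted multiple of the \emph{full} potential $\sigma(\bbA,\hat{\bbB})=\text{Tr}\!\big(\bbA^{-1}(\hat{\bbB}-\bbA)\big)$. Over the complete basis this is routine: a mediant inequality gives $\max_{i}\rho_i\ge \text{Tr}(\hat{\bbB}-\bbA)/\text{Tr}(\bbA)$, and then $\text{Tr}(\hat{\bbB}-\bbA)\ge\mu\,\text{Tr}(\bbA^{-1}(\hat{\bbB}-\bbA))=\mu\,\sigma(\bbA,\hat{\bbB})$ (from $\bbA^{-1}\preceq\mu^{-1}\bbI$ and $\hat{\bbB}-\bbA\succeq 0$) together with $\text{Tr}(\bbA)\le dL$ reproduces the full-memory factor $\mu/(dL)$ of \citep{rodomanov2021greedy}. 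Restricting the maximisation to $\{\bbe_1,\dots,\bbe_\tau\}$ is exactly where Definition \ref{def:relativeCondition} enters: $\beta_\tau$ quantifies how much the retained directions can under-represent the directions carrying the potential, so that the spread of the denominators $\bbe_i^\top\bbA\bbe_i$ over the kept coordinates, and the portion of the error they capture, degrade the full-basis estimate by no more than the factor $\beta_\tau$, giving $\max_{i\le\tau}\rho_i\ge\frac{\mu}{\beta_\tau d L}\sigma(\bbA,\hat{\bbB})$. This is the only place where memory truncation enters the rate and where the structural properties of the greedily-selected subspace (the span within which displacement aggregation confines the curvature information) must be used; with $\tau=d$ one has $\beta_d=1$ and the estimate collapses to the full-memory one. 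Combining with the previous paragraph yields $\sigma(\bbA,\bbB_+)\le\big(1-\frac{\mu}{\beta_\tau d L}\big)\sigma(\bbA,\hat{\bbB})$.

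Finally I would carry out the self-concordance transfer. Since $\hat{\bbB}=(1+\phi C_M)\bbB$, one has $\sigma(\bbA,\hat{\bbB})=(1+\phi C_M)\text{Tr}(\bbA^{-1}\bbB)-d$; the companion direction of \eqref{eq:SelfConcordant} gives $\nabla^2 f(\bbx)\preceq(1+\phi C_M)\bbA$, hence $\bbA^{-1}\preceq(1+\phi C_M)\nabla^2 f(\bbx)^{-1}$, and, using $\bbB\succ0$, $\text{Tr}(\bbA^{-1}\bbB)\le(1+\phi C_M)\text{Tr}(\nabla^2 f(\bbx)^{-1}\bbB)$. Substituting $\text{Tr}(\nabla^2 f(\bbx)^{-1}\bbB)=\sigma(\nabla^2 f(\bbx),\bbB)+d$ and bounding the leftover additive term via $(1+\phi C_M)^2-1\le 2\phi C_M(1+\phi C_M)$ gives $\sigma(\bbA,\hat{\bbB})\le(1+\phi C_M)^2\big(\sigma(\nabla^2 f(\bbx),\bbB)+\frac{2d\phi C_M}{1+\phi C_M}\big)$. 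Chaining this with the contraction from the greedy step produces \eqref{eq:propHessianUpdate}. Apart from the restricted-greedy estimate, the remaining care is purely bookkeeping: the consistent two-sided use of \eqref{eq:SelfConcordant} and the scaling invariance of the greedy selection already noted.
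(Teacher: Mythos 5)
Your proposal is correct and follows essentially the same route as the paper: the paper invokes Lemma 2.4 of Rodomanov--Nesterov for the per-step decrease $\sigma(\bbA,\hat{\bbB})-\sigma(\bbA,\bbB_+)\ge \bbs^\top(\hat{\bbB}-\bbA)\bbs/\bbs^\top\bbA\bbs$ (which you re-derive from the trace identity and Cauchy--Schwarz), and then chains $\max_{1\le i\le\tau}\bbe_i^\top\bbE\bbe_i=\beta_\tau^{-1}\max_{1\le k\le d}\bbe_k^\top\bbE\bbe_k\ge(\beta_\tau d)^{-1}\mathrm{Tr}(\bbE)\ge\mu(\beta_\tau d)^{-1}\sigma(\bbA,\hat{\bbB})$ together with $\mu\bbI\preceq\nabla^2 f(\bbx_+)\preceq L\bbI$, which is exactly the mechanism you attribute to Definition \ref{def:relativeCondition}. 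Your self-concordance transfer is in fact written more carefully than the paper's own (which silently equates $\mathrm{Tr}(\nabla^2 f(\bbx_+)^{-1}\bbB)$ with $\sigma(\nabla^2 f(\bbx),\bbB)+d$, a step that really requires your explicit inequality $\nabla^2 f(\bbx_+)^{-1}\preceq(1+\phi C_M)\nabla^2 f(\bbx)^{-1}$).
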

\noindent Proposition \ref{prop:HessianUpdate} captures the contraction factor for the convergence of the Hessian approximation error in LG-BFGS, which is determined by not only the objective function but also the memory size. Specifically, the term $1 - \mu/(\beta_\tau d L)$ in \eqref{eq:propHessianUpdate} embeds the role played by the memory size $\tau$. Since $\beta_{\tau_1} \ge \beta_{\tau_2}$ for any $\tau_1 \le \tau_2$ by Definition~\ref{def:relativeCondition}, a larger $\tau$ allows the greedy selection from a larger subset $\{\bbe_i\}_{i=1}^\tau$, yields a smaller minimal relative condition number $\beta_\tau$, and generates a smaller contraction factor that improves the update progress. While the update progress is shown in \eqref{eq:propHessianUpdate}, we remark that LG-BFGS does not compute any Hessian approximation matrix $\bbB$ across iterations. By combining Propositions \ref{prop:linearConvergence}-\ref{prop:HessianUpdate}, we formally show 
the convergence of LG-BFGS. 
\begin{theorem}\label{thm:superlinearConvergence}
	Consider the setting in Proposition \ref{prop:linearConvergence}. Let $\beta_{t, \tau}$ be the minimal relative condition number of the subset $\{\bbe_u\}_{u=1}^\tau$ with respect to the error matrix $\hat{\bbB}_t - \nabla^2 f(\bbx_{t+1})$ at iteration $t$, and $t_0$ be such that $(2 d L / \mu) \prod_{u=1}^{t_0}(1\!-\!\frac{\mu}{\beta_{u,\tau}dL}) \le 1$. Then, if $\bbx_0$ satisfies $\lambda_f(\bbx_0) \le  \mu \ln 2 / (4 (2d+1) L)$, we have
\begin{equation}\label{eq:ExplicitSuperLinearRate}
		\lambda_f(\bbx_{t+t_0+1}) \le \prod_{u=t_0+1}^{t+t_0}\Big(1\!-\!\frac{\mu}{\beta_{u,\tau}dL}\Big)^{t+t_0+1-u} 
  \Big(1\!-\!\frac{\mu}{2L}\Big)^{t_0} \lambda_f(\bbx_{0}).
	\end{equation}
\end{theorem}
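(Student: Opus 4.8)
The plan is to treat Proposition~\ref{prop:linearConvergence} and Proposition~\ref{prop:HessianUpdate} as the two engines and glue them with a single-step descent estimate for $\lambda_f$, splitting the horizon into a linear burn-in phase of length $t_0$ and a subsequent superlinear phase. Throughout I write $\sigma_t := \sigma(\nabla^2 f(\bbx_t),\bbB_t) = \mathrm{Tr}(\nabla^2 f(\bbx_t)^{-1}\bbB_t)-d$ for the trace potential and $\gamma_t := 1-\mu/(\beta_{t,\tau}dL)$ for the per-step contraction factor of Proposition~\ref{prop:HessianUpdate}, with $\beta_{t,\tau}$ taken relative to the error matrix $\hat{\bbB}_t-\nabla^2 f(\bbx_{t+1})$ as in the statement.

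First I would establish the bridge from the Hessian potential to the gradient norm. Because the correction strategy (Proposition~\ref{prop:correctStep} together with the scaling in Algorithm~\ref{algorithm:LG-BFGS}) keeps $\nabla^2 f(\bbx_t)\preceq\bbB_t$, the eigenvalues $\nu_1,\dots,\nu_d$ of $\nabla^2 f(\bbx_t)^{-1/2}\bbB_t\nabla^2 f(\bbx_t)^{-1/2}$ all exceed $1$, so $\|\bbI-\nabla^2 f(\bbx_t)^{1/2}\bbB_t^{-1}\nabla^2 f(\bbx_t)^{1/2}\| = 1-1/\max_i\nu_i \le \sigma_t/(1+\sigma_t)\le\sigma_t$. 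Substituting this deviation into the standard self-concordant one-step analysis of the update \eqref{eq:LGBFGSvariable} yields $\lambda_f(\bbx_{t+1})\le\kappa_t\,\lambda_f(\bbx_t)$ with $\kappa_t$ dominated by $\sigma_t$ once the $(1+\phi_tC_M)$-type corrections are absorbed; this is the mechanism that converts a small potential into a fast decrease of $\lambda_f$.

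Second, I would pin down the decay of $\sigma_t$ by unrolling Proposition~\ref{prop:HessianUpdate}. From Proposition~\ref{prop:linearConvergence}, $\lambda_f(\bbx_t)\le(1-\mu/(2L))^t\lambda_f(\bbx_0)$, and since $\bbH_t=\bbB_t^{-1}\preceq\nabla^2 f(\bbx_t)^{-1}$ gives $\phi_t\le\lambda_f(\bbx_t)$, the displacements are geometrically summable with $\sum_t\phi_t\le(2L/\mu)\lambda_f(\bbx_0)$. The initial-radius hypothesis $\lambda_f(\bbx_0)\le\mu\ln 2/(4(2d+1)L)$ is calibrated precisely so that the multiplicative factors $\prod_t(1+\phi_tC_M)^2$ and the accumulated additive terms $\sum_t 2d\phi_tC_M/(1+\phi_tC_M)$ in the recursion of Proposition~\ref{prop:HessianUpdate} remain below an absolute constant; carrying the crude initial estimate $\sigma_0\le 2dL/\mu$ (from $\hat{\bbB}_0=(1+\phi_0C_M)L\bbI$ and $\nabla^2 f\succeq\mu\bbI$) through the unrolled recursion then gives $\sigma_t\le(2dL/\mu)\prod_{u=1}^{t}\gamma_u$ up to that bounded inflation.

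Third, I would split at $t_0$ and telescope. For $1\le t\le t_0$ I use the linear factor $(1-\mu/(2L))$ from Proposition~\ref{prop:linearConvergence}; the defining property $(2dL/\mu)\prod_{u=1}^{t_0}\gamma_u\le1$ guarantees $\sigma_{t_0}\le1$, so the superlinear regime is reached. For $t>t_0$ the contraction continues from $\sigma_{t_0}\le 1$ as $\sigma_k\le\prod_{u=t_0+1}^{k}\gamma_u$, and combining with the single-step bound $\lambda_f(\bbx_{k+1})\le\sigma_k\lambda_f(\bbx_k)$ and telescoping over $t_0+1\le k\le t+t_0$ gives
\[
\lambda_f(\bbx_{t+t_0+1})\;\le\;\Big(\prod_{k=t_0+1}^{t+t_0}\sigma_k\Big)\lambda_f(\bbx_{t_0+1})\;\le\;\Big(\prod_{u=t_0+1}^{t+t_0}\gamma_u^{\,t+t_0+1-u}\Big)\lambda_f(\bbx_{t_0+1}),
\]
where each $\gamma_u$ is collected from every $\sigma_k$ with $k\ge u$; bounding $\lambda_f(\bbx_{t_0+1})\le(1-\mu/(2L))^{t_0}\lambda_f(\bbx_0)$ yields the claim.

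The main obstacle I anticipate is the bookkeeping in the second step: the perturbations $(1+\phi_tC_M)^2$ and $2d\phi_tC_M/(1+\phi_tC_M)$ must be shown not merely summable but small enough that the effective contraction of $\sigma_t$ is still governed by the bare product $\prod\gamma_u$, so that no stray $\lambda_f$-dependent factor survives into the final rate. This is exactly why the initial condition here is quantitatively stronger than in Proposition~\ref{prop:linearConvergence}, with the $(2d+1)$ encoding the additive accumulation. A secondary point requiring care is that $\beta_{t,\tau}$ is defined relative to the time-varying error matrix $\hat{\bbB}_t-\nabla^2 f(\bbx_{t+1})$, so the $\gamma_u$ genuinely depend on the iteration and must be propagated through the telescoping as written rather than collapsed to a single worst-case constant.
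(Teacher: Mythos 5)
Your overall architecture matches the paper's: reduce to greedy BFGS, bound $\bbB_t\preceq(1+\sigma_t)\nabla^2 f(\bbx_t)$ so that the one-step lemma gives $\lambda_f(\bbx_{t+1})\lesssim\big(\sigma_t+O(d C_M\lambda_f(\bbx_t))\big)\lambda_f(\bbx_t)$, drive $\sigma_t$ down with Proposition~\ref{prop:HessianUpdate}, split at $t_0$, and telescope (your accounting of how each $\gamma_u:=1-\mu/(\beta_{u,\tau}dL)$ appears $t+t_0+1-u$ times is correct). The gap is in your second step. You claim that because $\prod_t(1+\phi_tC_M)^2$ is bounded and $\sum_t 2d\phi_tC_M$ is bounded by a constant, unrolling the recursion $\sigma_{t+1}\le\gamma_{t+1}(1+\phi_tC_M)^2\big(\sigma_t+\tfrac{2d\phi_tC_M}{1+\phi_tC_M}\big)$ yields $\sigma_t\le(2dL/\mu)\prod_{u=1}^t\gamma_u$ up to a constant. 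It does not. In the unrolled sum, the additive term injected at step $k$ is multiplied only by $\prod_{u=k}^{t}\gamma_u$, not by the full product, so summability of the injections alone gives only $\sigma_t\le O(1)$ — a constant, not a quantity decaying like $\prod_{u\le t}\gamma_u$. Trying to rescue this by using the linear decay $\phi_{k}\le\lambda_f(\bbx_k)\le(1-\mu/(2L))^k\lambda_f(\bbx_0)\le\prod_{u\le k}\gamma_u\,\lambda_f(\bbx_0)$ makes every summand comparable to $\prod_{u\le t}\gamma_u$ but leaves $t$ of them, producing a stray factor of $t$ that contaminates the non-asymptotic bound. Since $\sigma_k\lesssim\prod_{u\le k}\gamma_u$ is exactly what your telescoping step consumes, the proof as proposed does not close.

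The missing device — which is the one genuinely non-routine idea in the paper's proof — is a \emph{joint} induction on the combined potential $\sigma_t+2dC_M\lambda_f(\bbx_t)\le\Phi_t$ with $\Phi_t:=\prod_{k=1}^{t}\gamma_k\, e^{2(2d+1)C_M\sum_{k=0}^{t-1}\lambda_f(\bbx_k)}\,dL/\mu$. The point is that the new additive contribution at step $t+1$, namely $2dC_M\lambda_f(\bbx_{t+1})$, is itself bounded by $O(dC_M\lambda_f(\bbx_t))\cdot\Phi_t$ via the one-step bound $\lambda_f(\bbx_{t+1})\le e^{2C_M\lambda_f(\bbx_t)}\Phi_t\lambda_f(\bbx_t)$, and (using $\gamma_{t+1}\ge 1/2$) it can therefore be absorbed \emph{multiplicatively} into $\Phi_{t+1}$ as a factor $1+4dC_M\lambda_f(\bbx_t)\le e^{4dC_M\lambda_f(\bbx_t)}$; the product of these factors over all $t$ is controlled by $e^{O(d)C_M\sum_k\lambda_f(\bbx_k)}\le 2$ thanks to the strengthened initial condition with the $(2d+1)$ factor. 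This converts the additive accumulation you flagged as "the main obstacle" into a bounded multiplicative inflation, yielding $\sigma_t+2dC_M\lambda_f(\bbx_t)\le(2dL/\mu)\prod_{u=1}^{t}\gamma_u$ exactly, after which your step three goes through verbatim. Without this (or an equivalent coupling of the $\sigma$- and $\lambda$-recursions), the stated bound is not reached.
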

Theorem \ref{thm:superlinearConvergence} states that the iterates of LG-BFGS converge to the optimal solution and establishes an explicit convergence rate, which depends on the linear term $(1-\mu/(2L))^{t_0}$ and the quadratic term $\prod_{u=t_0+1}^{t+t_0}(1\!-\!\mu/(\beta_{u,\tau}dL))^{t+t_0+1-u}$. By introducing a structural assumption on the error matrix $\hat{\bbB}_t - \nabla^2 f(\bbx_{t+1})$, we can simplifies the expression of the convergence rate and show a non-asymptotic superlinear rate in the following theorem.
\begin{theorem}\label{coro:superlinearConvergence}
	Under the same settings as Theorem \ref{thm:superlinearConvergence}, suppose the minimal relative condition number $\beta_{t,\tau}$ of subset $\{\bbe_i\}_{i=1}^\tau$ 
 or the condition number $\beta_{t}$ of error matrix $\hat{\bbB}_t - \nabla^2 f(\bbx_{t+1})$ is bounded by a constant $C_\beta$, i.e., either $\beta_{t,\tau} \le C_\beta$ or $\beta_t \le C_\beta$. 
 Then, we have \begin{align}\label{eq:ExplicitSuperLinearRateCoro}
		\lambda_f(\bbx_{t+t_0+1}) \le \Big(1\!-\!\frac{\mu}{C_\beta dL}\Big)^{\frac{t(t+1)}{2}} 
        \Big(1\!-\!\frac{\mu}{2L}\Big)^{t_0} \lambda_f(\bbx_{0}).
	\end{align}
\end{theorem}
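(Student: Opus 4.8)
The plan is to treat Theorem~\ref{coro:superlinearConvergence} as a direct corollary of Theorem~\ref{thm:superlinearConvergence}: the analytic work is already carried by the per-iteration product bound \eqref{eq:ExplicitSuperLinearRate}, so all that remains is to collapse the iteration-dependent contraction factors into a single constant factor and to evaluate the resulting exponent. First I would invoke \eqref{eq:ExplicitSuperLinearRate} verbatim, leaving the linear prefactor $(1-\mu/(2L))^{t_0}\lambda_f(\bbx_0)$ untouched and focusing on the product $\prod_{u=t_0+1}^{t+t_0}(1-\mu/(\beta_{u,\tau}dL))^{t+t_0+1-u}$.

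Next I would reconcile the two hypotheses. The statement permits either $\beta_{u,\tau}\le C_\beta$ or $\beta_u\le C_\beta$, and the key observation is that both reduce to the single uniform bound $\beta_{u,\tau}\le C_\beta$. Indeed, as noted after Definition~\ref{def:relativeCondition}, the relative condition number of any basis vector with respect to a matrix is bounded above by that matrix's ordinary condition number; taking the minimum over the subset $\{\bbe_i\}_{i=1}^\tau$ gives $\beta_{u,\tau}\le\beta_u$, so the second hypothesis implies the first. Thus in either case $\beta_{u,\tau}\le C_\beta$ holds for every $u$.

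With this uniform bound in hand, I would argue monotonicity factor by factor. Since $\beta_{u,\tau}\le C_\beta$ we have $1-\mu/(\beta_{u,\tau}dL)\le 1-\mu/(C_\beta dL)$; moreover, because $\beta_{u,\tau}\ge 1$ (Definition~\ref{def:relativeCondition}) together with $\mu\le L\le dL$, each base lies in $[0,1)$, so raising it to the positive integer power $t+t_0+1-u$ preserves the inequality. Substituting the common upper bound for every factor then yields $\prod_{u=t_0+1}^{t+t_0}(1-\mu/(\beta_{u,\tau}dL))^{t+t_0+1-u}\le (1-\mu/(C_\beta dL))^{E}$ with $E=\sum_{u=t_0+1}^{t+t_0}(t+t_0+1-u)$.

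Finally I would evaluate the exponent by reindexing $v=t+t_0+1-u$, which runs over $1,\ldots,t$ as $u$ runs over $t_0+1,\ldots,t+t_0$, so $E=\sum_{v=1}^{t}v=t(t+1)/2$; reattaching the untouched linear prefactor reproduces exactly \eqref{eq:ExplicitSuperLinearRateCoro}. I do not anticipate a genuine obstacle here, since Theorem~\ref{thm:superlinearConvergence} bears the analytic burden; the only points requiring care are the bookkeeping of the exponent sum and the justification that both hypotheses collapse to $\beta_{u,\tau}\le C_\beta$ through the single inequality $\beta_{u,\tau}\le\beta_u$ linking the subset relative condition number to the error-matrix condition number.
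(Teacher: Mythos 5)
Your proposal is correct and follows essentially the same route as the paper's proof: both hypotheses are collapsed to the single uniform bound $\beta_{u,\tau}\le C_\beta$ via the observation that the minimal relative condition number is dominated by the ordinary condition number of the error matrix, and the product in \eqref{eq:ExplicitSuperLinearRate} is then bounded factor by factor, with the exponents summing to $t(t+1)/2$. Your added care about each base lying in $[0,1)$ and the explicit reindexing of the exponent sum are minor elaborations of steps the paper leaves implicit.
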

Theorem \ref{coro:superlinearConvergence} presents concrete takeaways demonstrating the explicit superlinear rate of LG-BFGS. For the first $t_0$ iterations, LG-BFGS converges at a linear rate that is not affected by the memory limitation. For $t \ge t_0$, once the superlinear phase is triggered, LG-BFGS converges at a superlinear rate whose contraction factor depends on the condition number $C_\beta$ of the error matrix of Hessian approximation, which in turn depends on the memory size $\tau$. 
This is the first result showing a superlinear rate for a fixed-size limited memory QN method, while it is important to note that it only holds for a sub-class of problems where the relative condition number of the Hessian approximation error along a low-memory subspace is well-behaved. We discuss more details 
in Remark \ref{remark:memory}.

\smallskip
\noindent {\bf Effect of memory size.} The fact that we only employ at most $\tau$ curvature pairs affects the convergence rate in a way previously unaddressed / unclear in the literature. Specifically, a larger $\tau$ decreases the minimal relative condition number $\beta_{u,\tau}$, reduces the contraction factor $1-\mu/(\beta_{u,\tau}dL)$, and improves the convergence rate, but increases the storage memory and computational cost; hence, yielding an inevitable trade-off between these factors. 
Importantly, this result  recovers the explicit superlinear rate of greedy BFGS \citep{rodomanov2021greedy} when the subset $\{\bbe_u\}_{u=1}^\tau$ increases to the entire basis $\{\bbe_u\}_{u=1}^d$, i.e., the limited memory increases to the full memory. 
\begin{remark}\label{remark:memory}
    There may exist pessimistic scenarios, where the 
    assumption on the error matrix $\hat{\bbB}_t - \nabla^2 f(\bbx_{t+1})$ does not hold for a small memory size $\tau$, and the convergence rate of LG-BFGS in Theorem \ref{thm:superlinearConvergence} may not be superlinear as shown in Theorem \ref{coro:superlinearConvergence}. However, 
    we show in Appendix F that there exists a provable bound $C_{t, \beta}$ on the condition number $\beta_t$ of $\hat{\bbB}_t - \nabla^2 f(\bbx_{t+1})$ in any circumstances, which increases with the iteration $t$, and LG-BFGS will converge at least with an improved linear rate. We emphasize that this bound is the worst-case analysis established on the condition number $\beta_t$, i.e., the minimal relative condition number with memory size one $\beta_{t,1}$. 
\end{remark}

\section{Discussion}\label{sec:discussion}

In this section, we compare the convergence rates, as well as storage requirements of LG-BFGS with other quasi-Newton methods. We replace all universal constants with $1$ to ease the comparisons.

\vspace{1mm}
\noindent \textbf{LG-BFGS.} From 
Theorem~\ref{coro:superlinearConvergence}, the iterates of 
LG-BFGS satisfy 
$
\frac{\lambda_f(\bbx_t)}{\lambda_f(\bbx_0)} \le \min \{ (1-\frac{\mu}{L})^t, (1-\frac{\mu}{C_\beta dL})^{\frac{t(t+1)}{2}} \}$.
%
When $t < C_\beta dL \ln(dL/\mu) / \mu$, the superlinear phase is not yet triggered and the first term is smaller which implies a linear rate of $(1-\mu/L)^t$. Once the superlinear phase is triggered, i.e., $t \ge C_\beta dL \ln(dL/\mu) / \mu$, the second term becomes smaller and iterates converge at a superlinear rate of $(1-\mu/(C_\beta dL))^{t(t+1)/2}$. The constant $C_{\beta}$ (defined in Theorem \ref{coro:superlinearConvergence}) depends on the memory size $\tau$, and as $\tau$ increases, $C_{\beta}$ becomes smaller and the superlinear rate becomes faster. 
The memory storage requirement of LG-BFGS is $\ccalO(\tau d)$ and its per iteration complexity is $\ccalO(\tau^2 d+\tau^4)$.

\vspace{1mm}
\noindent \textbf{L-BFGS.} The iterates of L-BFGS converge at a linear rate \citep{liu1989limited}, i.e., 
$|f(\bbx_t) - f(\bbx^*)| \le \gamma^t |f(\bbx_0) - f(\bbx^*)|$, for some $ \gamma \in (0,1)$, which is slower than the superlinear rare of LG-BFGS. On the other hand, the storage requirement and  cost per iteration of L-BFGS are of $\ccalO(\tau d)$, which are comparable with the ones for LG-BFGS.

\vspace{1mm}
\noindent \textbf{Greedy BFGS.} Based on \citep{rodomanov2021greedy,lin2021greedy}, the iterates of Greedy BFGS satisfy 
$\frac{\lambda_f(\bbx_t)}{\lambda_f(\bbx_0)} \le \min \{ (1-\frac{\mu}{L})^t, (1-\frac{\mu}{dL})^{\frac{t(t+1)}{2}} \}$. It 
requires $dL \ln(dL/\mu) / \mu$ iterations to trigger the superlinear phase and 
achieves a faster superlinear rate than LG-BFGS since $C_\beta \ge 1$. However, LG-BFGS requires less storage and computational cost per iteration compared to greedy BFGS, since the storage and per iteration complexity of greedy BFGS are $\ccalO( d^2)$.

\vspace{1mm}
\noindent \textbf{BFGS.} The iterates of BFGS 
satisfy 
$ \frac{\lambda_f(\bbx_t)}{\lambda_f(\bbx_0)} \le \min \{ (1-\frac{\mu}{L})^t, ( \frac{d \ln ({L}/{\mu})}{t} )^{\frac{t}{2}} \}$ \citep{rodomanov2021rates}.
The superlinear convergence of BFGS starts after $d \ln (L/\mu)$ iterations, while it takes $C_\beta dL \ln(dL/\mu) / \mu$ iterations for LG-BFGS to trigger the superlinear rate. However, 
the superlinear rate of LG-BFGS is faster than BFGS for large $t$, i.e.,
$    (1\!-\!\frac{\mu}{C_\beta dL})^{\frac{t(t+1)}{2}} \ll ( \frac{(d \ln \frac{L}{\mu})}{t})^{\frac{t}{2}}$.
Moreover, the storage requirement of LG-BFGS and its cost per iteration are smaller than $\ccalO( d^2)$ of BFGS. 

\vspace{1mm}
\noindent \textbf{BFGS with displacement aggregation.} The results \citep{sahu2023modified,berahas2022limited} for BFGS with displacement aggregation can only guarantee an \textit{asymptotic} superlinear convergence. Moreover, it is not strictly limited-memory as it computes the variable variation as $\bbs_t = \bbx_{t+1} - \bbx_t$ which can be any vector in $\reals^d$. Hence, $\bbs_t$ could be independent of all previously stored variable variations, which leads to (C1) in the displacement step and increases the memory size [cf. \eqref{eq:displacementCase1}]. As a result, 
the memory size could increase up to $d$, resulting in a storage and cost per iteration of $\ccalO(d^2)$.  
In contrast, LG-BFGS selects $\bbs_t$ from  $ \{\bbe_i\}_{i=1}^\tau$ and the maximal number of independent variable variations is $\tau$. Hence, $\bbs_t$ is guaranteed 
parallel to one of the previously stored variable variations when the memory size reaches $\tau$ and LG-BFGS is strictly limited-memory with the storage requirement of $\ccalO(\tau d)$. Moreover, LG-BFGS obtains a non-asymptotic superlinear convergence rate, which explains the trade-off between memory and convergence rate explicitly.

\section{Experiments}

\begin{figure}[t]
  \centering
  \begin{subfigure}{0.42\linewidth}
    \includegraphics[width=\linewidth]{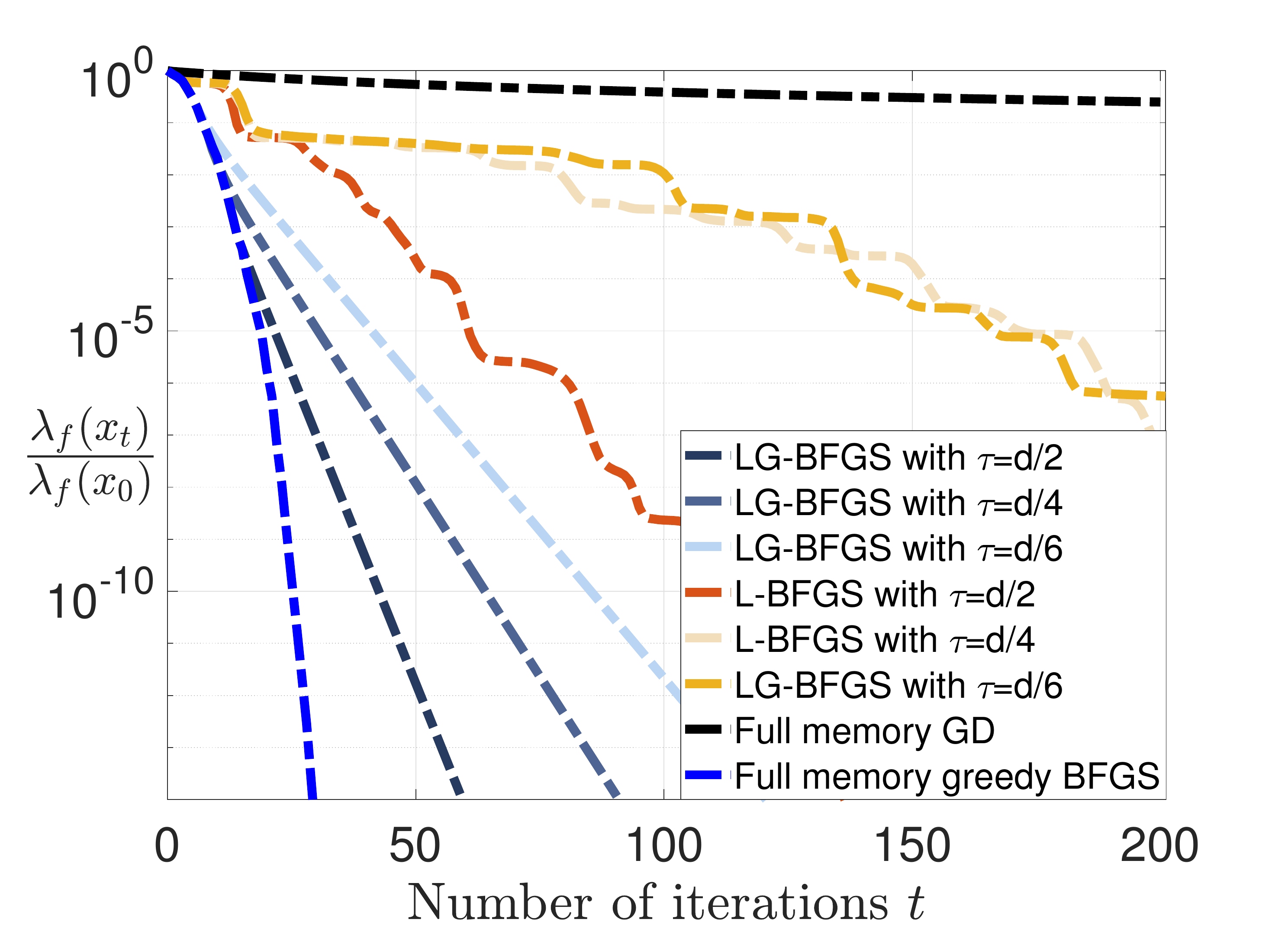}
    \caption{Svmguide3 dataset}
  \end{subfigure}
  \begin{subfigure}{0.42\linewidth}
    \includegraphics[width=\linewidth]{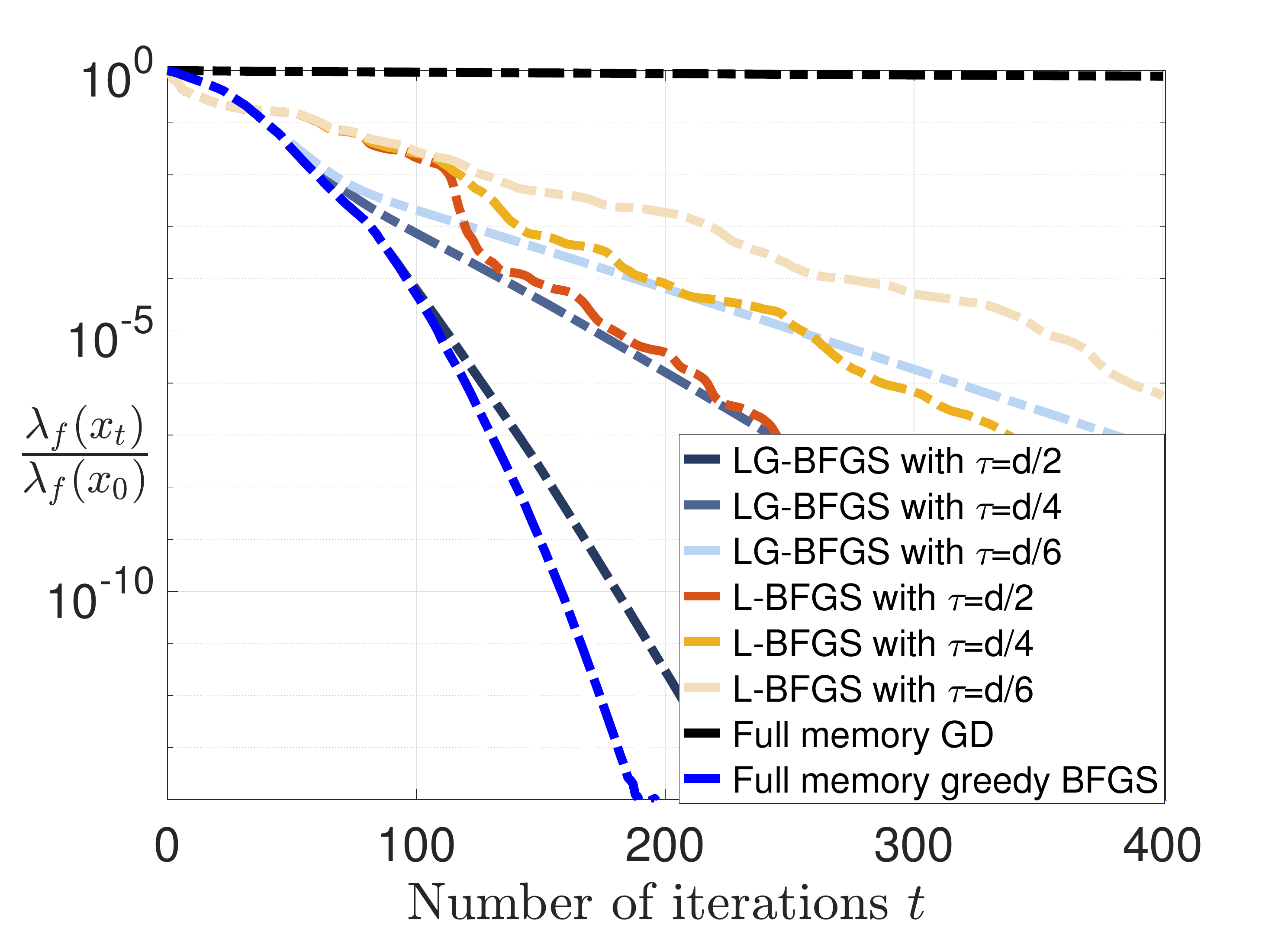}
    \caption{Connect-4 dataset}
  \end{subfigure}
  \begin{subfigure}{0.42\linewidth}
    \includegraphics[width=\linewidth]{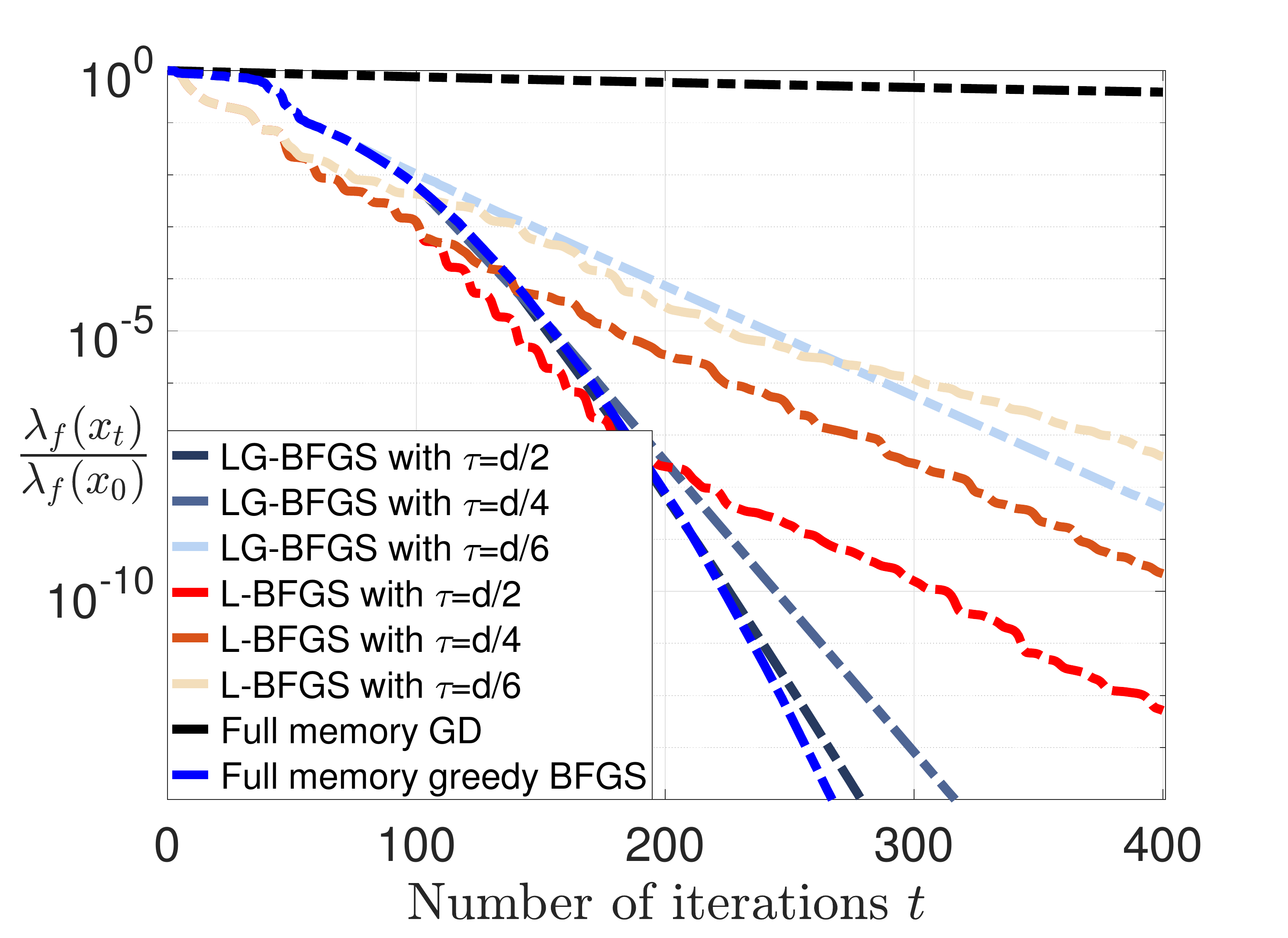}
    \caption{Protein dataset}
  \end{subfigure}
  \begin{subfigure}{0.42\linewidth}
    \includegraphics[width=\linewidth]{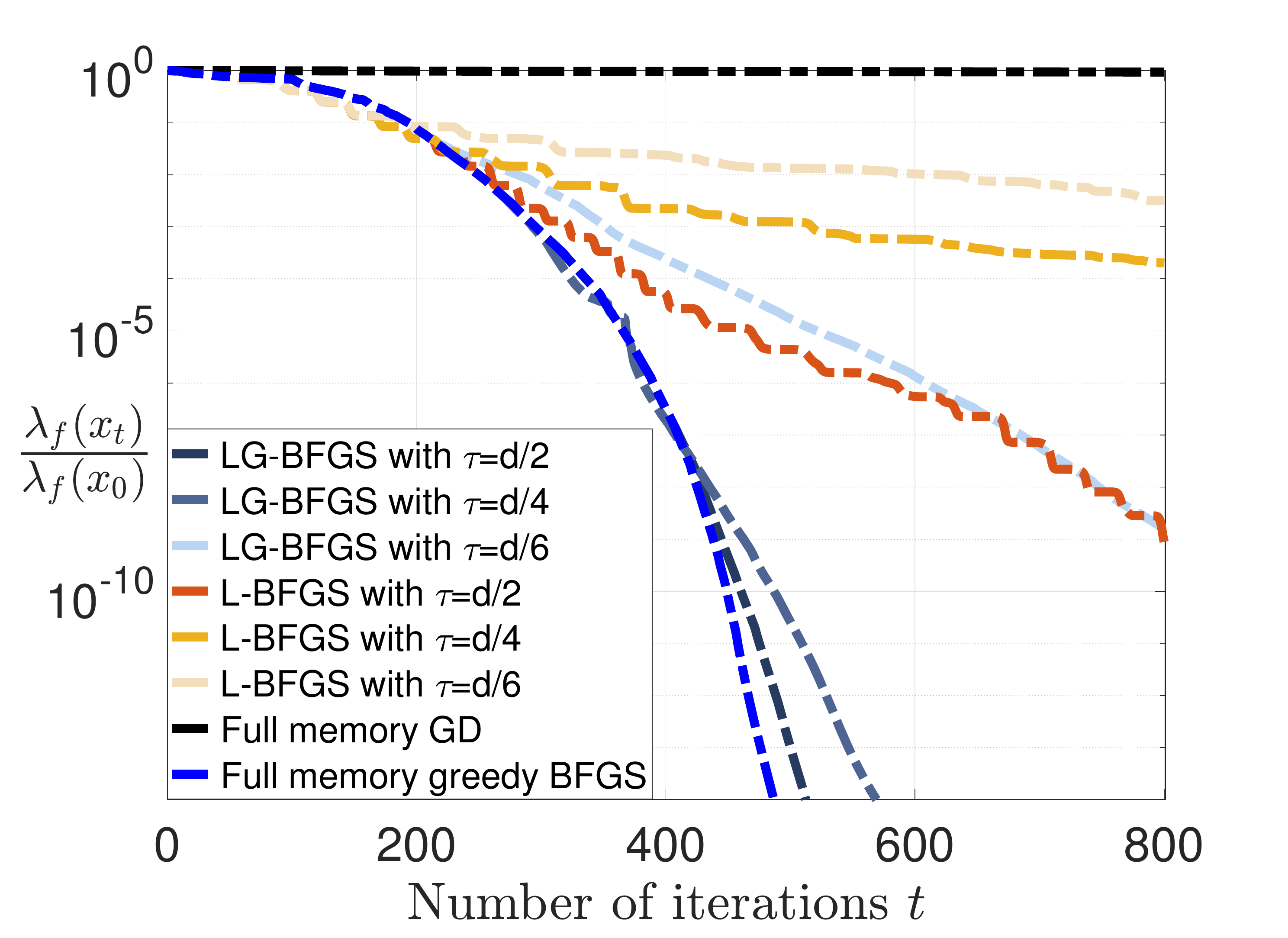}
    \caption{Mnist dataset}
  \end{subfigure}
  \caption{Comparison of LG-BFGS, L-BFGS, gradient descent, and greedy BFGS on four datasets. We consider different memory sizes for LG-BFGS and L-BFGS.}
 \label{fig:1}
\end{figure}

We compare the performance of LG-BFGS with gradient descent, L-BFGS, and greedy BFGS on different datasets. We focus on the logistic regression with $l_2$ regularization where $ f(\bbx) \!=\! \frac{1}{N} \sum_{i=1}^N\ln (1\!+\!\exp(-y_i \bbz_i^\top \bbx)) \!+\! \frac{\mu}{2}\|\bbx\|^2$. Here, $\{\bbz_i\}_{i=1}^N\!\in\! \mathbb{R}^d$ are data points and $\{y_i\}_{i=1}^N\!\in\! \{-1, 1\}$ are labels. We normalize all samples s.t. the objective function gradient is smooth with $L=1/4 + \mu$. Considering the local nature of superlinear results for QN methods, we construct a setup with a warm start, i.e., the initialization is close to the solution. Details 
may be found in Appendix G, where we also analyze performance with a cold start. We run experiments on four datasets: svmguide3, connect-4, protein and mnist, and select the regularization parameter $\mu$ to achieve the best performance, whose descriptions are also 
in Appendix G. We set the stepsize of all QN methods as $1$, while the stepsize of gradient descent is $1/L$. The subset $\{\bbe_i\}_{i=1}^\tau$ is selected: (i) when the number of stored curvature pairs is smaller than $\tau$, we select $\bbs_t$ from the entire basis $\{\bbe_i\}_{i=1}^d$; (ii) when the number of stored curvature pairs reaches $\tau$, we select $\bbs_t$ from the stored variable variations $\{\bbs_u\}_{u=0}^{\tau-1}$.

In Figure \ref{fig:1}, we observe that LG-BFGS consistently outperforms L-BFGS and gradient descent, 
corroborating our theoretical results. 
The convergence rate of LG-BFGS degrades with the decrease of memory size as expected. This is because a smaller memory 
restricts the selection space of the subset $\{\bbe_i\}_{i=1}^\tau$ in \eqref{eq:greedyPreparation}, which increases the minimal relative condition number $\beta_\tau$ in \eqref{eq:propHessianUpdate} and decreases the convergence of the Hessian approximation matrix in \eqref{eq:propHessianUpdate}. Hence, it results in a slower superlinear rate of LG-BFGS and the latter may be close to a linear rate when the memory size is small. Moreover, LG-BFGS can exhibit a comparable performance to greedy BFGS when the memory size is large. This corresponds to the fact that Theorem \ref{thm:superlinearConvergence} recovers the superlinear rate of greedy BFGS when the memory size is the space dimension, i.e., $\tau=d$.

\section{Conclusion}\label{C}
In this work, we developed the LG-BFGS method 
that can exhibit an explicit superlinear convergence rate with a fixed memory size $\tau$, which is smaller than the problem dimension $d$. 
The key attribute of the construction is an innovative synthesis of greedy basis vector selection and displacement aggregation, which conducts greedy selection along a sub-basis and allows ongoing displacement aggregation to ensure memory limitation. The resultant method attains the first non-asymptotic superlinear guarantee for 
limited-memory quasi-Newton methods, which 
explores an explicit trade-off between the memory requirement and the contraction factor in the rate of the convergence. 

\smallskip
\smallskip
\smallskip
{\bf \noindent Disclaimer:} This paper was prepared for informational purposes in part by the Artificial Intelligence Research group of JPMorgan Chase \& Co. and its affiliates (“JP Morgan”), and is not a product of the Research Department of JP Morgan. JP Morgan makes no representation and warranty whatsoever and disclaims all liability, for the completeness, accuracy or reliability of the information contained herein. This document is not intended as investment research or investment advice, or a recommendation, offer or solicitation for the purchase or sale of any security, financial instrument, financial product or service, or to be used in any way for evaluating the merits of participating in any transaction, and shall not constitute a solicitation under any jurisdiction or to any person, if such solicitation under such jurisdiction or to such person would be unlawful.

\section*{Acknowledgements}

The research of A. Mokhtari is supported in part by NSF Grants 2007668, 2019844, and  2112471,  ARO  Grant  W911NF2110226,  the  Machine  Learning  Lab  (MLL)  at  UT  Austin, the Wireless Networking and Communications Group (WNCG) Industrial Affiliates Program, and the NSF AI Institute for Foundations of Machine Learning (IFML).
\newpage

\printbibliography

\newpage

\appendix

\section{Proof of Proposition 1}

For the initial iteration $t=0$ with the initial Hessian approximation $\bbB_0$, curvature pair $\{\bbs_0,\bbr_0\}$ and scaling factor $\psi_1$, consider the corrected Hessian approximation
\begin{align}\label{proof:propeq1}
	\hat{\bbB}_1 = \psi_{1} \bbB_1 = \psi_1 \text{\rm BFGS}(\bbB_{0}, \bbs_{0}, \bbr_{0}).
\end{align}
Define the corrected initial Hessian approximation as $\tilde{\bbB}_0 = \psi_1 \bbB_0$ and the scaled gradient variation as $\tilde{\bbr}_{0} = \psi_1 \bbr_0$. By performing the BFGS update on $\tilde{\bbB}_0$ with $\{\bbs_0, \tilde{\bbr}_{0}\}$, we have 
\begin{align}\label{proof:propeq2}
	\tilde{\bbB}_1 \!=\! \text{\rm BFGS}(\tilde{\bbB}_{0}, \bbs_{0}, \tilde{\bbr}_{0}) & = \tilde{\bbB}_0 \!+\! \frac{\tilde{\bbr}_0 \tilde{\bbr}_0^\top}{\tilde{\bbr}_0^\top \bbs_0} \!-\! \frac{\tilde{\bbB}_0 \bbs_0 \bbs_0^\top \tilde{\bbB}_0^\top}{\bbs_0^\top \tilde{\bbB}_0 \bbs_t} = \psi_1 \bbB_0 + \frac{\psi_1^2 \bbr_0 \bbr_0^\top}{\psi_1 \bbr_0^\top \bbs_0} - \frac{\psi_1^2 \bbB_0 \bbs_0 \bbs_0^\top \bbB_0^\top}{ \psi_1 \bbs_0^\top \bbB_0 \bbs_t} \\
	& =\! \psi_1 \bbB_0 \!+\! \frac{\psi_1 \bbr_0 \bbr_0^\top}{ \bbr_0^\top \bbs_0} \!-\! \frac{\psi_1 \bbB_0 \bbs_0 \bbs_0^\top \bbB_0^\top}{ \bbs_0^\top \bbB_0 \bbs_t} = \psi_1 \Big( \bbB_0 + \frac{\bbr_0 \bbr_0^\top}{ \bbr_0^\top \bbs_0} - \frac{\bbB_0 \bbs_0 \bbs_0^\top \bbB_0^\top}{ \bbs_0^\top \bbB_0 \bbs_t} \Big) \nonumber \\  
	& = \psi_1 \text{\rm BFGS}(\bbB_{0}, \bbs_{0}, \bbr_{0}) = \psi_{1} \bbB_1 = \hat{\bbB}_1. \nonumber
\end{align}
This indicates that scaling the Hessian approximation matrix $\bbB_1$ is equivalent to scaling the initial Hessian approximation $\bbB_0$ and the gradient variation $\bbr_0$. Thus, the proposition conclusion holds for the initial iteration $t=1$.

For any iteration $t > 1$ with the initial Hessian approximation $\bbB_0$, stored curvature pair $\{\bbs_u,\bbr_u\}_{u=0}^{t-1}$ and the scaling factor $\psi_{t}$, 
consider the corrected Hessian approximation
\begin{align}\label{proof:propeq3}
	\hat{\bbB}_t = \psi_{t} \bbB_{t} = \psi_{t-1} \text{\rm BFGS}(\bbB_{t-1}, \bbs_{t-1}, \bbr_{t-1}),~\ldots, ~\bbB_1 = \text{\rm BFGS}(\bbB_{0}, \bbs_{0}, \bbr_{0}).
\end{align}
Define the corrected initial Hessian approximation as $\tilde{\bbB}_0 = \psi_{t} \bbB_0$ and the scaled gradient variations as $\tilde{\bbr}_{u} = \psi_{t} \bbr_u$ for $u=0,\ldots,t-1$. By performing the BFGS updates on $\hat{\bbB}_0$ with $\{\bbs_u, \tilde{\bbr}_u\}_{u=0}^{t-1}$, we have
\begin{align}\label{proof:propeq4}
	\tilde{\bbB}_t = \text{\rm BFGS}(\tilde{\bbB}_{t-1}, \bbs_{t-1}, \tilde{\bbr}_{t-1}),~\ldots, ~\tilde{\bbB}_1 = \text{\rm BFGS}(\tilde{\bbB}_{0}, \bbs_{0}, \tilde{\bbr}_{0}).
\end{align}
We now use \textbf{induction} to prove the following statement 
\begin{align}\label{proof:propeq45}
    \tilde{\bbB}_k = \psi_{t}\bbB_k,~\forall~k=1,\ldots,t.
\end{align}
For the initial iteration $k=1$, by performing the BFGS update on $\tilde{\bbB}_0$ with $\{\bbs_0, \tilde{\bbr}_{0}\}$, we have
\begin{align}\label{proof:propeq5}
	&\tilde{\bbB}_1 \!\!=\! \text{\rm BFGS}(\tilde{\bbB}_{0},\! \bbs_{0},\! \tilde{\bbr}_{0}) \!=\! \psi_{t} \Big(\! \bbB_0 \!+\! \frac{\bbr_0 \bbr_0^\top}{ \bbr_0^\top \bbs_0} \!-\! \frac{\bbB_0 \bbs_0 \bbs_0^\top \bbB_0^\top}{ \bbs_0^\top \bbB_0 \bbs_0} \!\Big) \!=\! \psi_{t} \text{\rm BFGS}(\bbB_{0},\! \bbs_{0},\! \bbr_{0}) \!=\! \psi_{t} \bbB_1.
\end{align}
Thus, \eqref{proof:propeq45} holds for $k=1$. Assume \eqref{proof:propeq45} holds for iteration $k-1 \ge 1$, i.e., $\tilde{\bbB}_{k-1} = \psi_{t} \bbB_{k-1} $, and consider iteration $k$. By performing the BFGS update on $\tilde{\bbB}_{k-1}$ with $\{\bbs_{k-1}, \tilde{\bbr}_{k-1}\}$, we have
\begin{align}\label{proof:propeq6}
	\tilde{\bbB}_k \!=\! \text{\rm BFGS}(\tilde{\bbB}_{k\!-\!1}, \bbs_{k\!-\!1}, \tilde{\bbr}_{k\!-\!1}) \!=\! \psi_{t} \Big( \bbB_{k\!-\!1} \!+\! \frac{\bbr_{k\!-\!1} \bbr_{k-1}^\top}{ \bbr_{k-1}^\top \bbs_{k-1}} \!-\! \frac{\bbB_{k-1} \bbs_{k-1} \bbs_{k-1}^\top \bbB_{k-1}^\top}{ \bbs_{k-1}^\top \bbB_{k-1} \bbs_{k-1}} \!\Big) 
    \!=\! \psi_{t} \bbB_k 
\end{align}
where $\tilde{\bbB}_{k-1} = \psi_{t} \bbB_{k-1}$ is used in the second equality. By combining \eqref{proof:propeq5} and \eqref{proof:propeq6}, we prove \eqref{proof:propeq45} by induction. Thus, we get
\begin{align}
	\tilde{\bbB}_t = \psi_{t} \bbB_{t} = \hat{\bbB}_t.
\end{align}
This indicates that scaling the Hessian approximation matrix $\bbB_{t}$ is equivalent to scaling the initial Hessian approximation $\bbB_0$ and the gradient variations $\{\bbr_u\}_{u=0}^{t-1}$. 

We conclude that at each iteration $t$, scaling the Hessian approximation matrix $\bbB_t$ by $\psi_{t}$ is equivalent to scaling the initial Hessian approximation $\bbB_0$ and the gradient variations $\{\bbr_u\}_{u=0}^{t-1}$ by $\psi_{t}$. Therefore, we can incorporate the correction strategy into the displacement step by scaling 
the gradient variations and maintain the remaining unchanged, which completes the proof.


\section{Proof of Proposition 2}\label{appendixB}

\noindent We need the following lemmas to complete the proof.
\begin{lemma}\label{lemma:1}
	If LG-BFGS and greedy BFGS perform the greedy selection from the same subset $\{\bbe_i\}_{i=1}^\tau$ of size $\tau$ and have the same initial settings, the iterates $\{\bbx_{L,t}\}_{t}$ generated by LG-BFGS equal to the iterates $\{\bbx_{G, t}\}_t$ generated by greedy BFGS. 
\end{lemma}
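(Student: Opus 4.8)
The plan is to prove the statement by induction on the iteration index $t$, but carrying a \emph{strengthened} hypothesis: at every iteration the two methods share not only the same iterate but also the same Hessian approximation. Concretely, let $\bbB_{L,t}$ and $\bbB_{G,t}$ denote the Hessian approximations maintained by LG-BFGS (implicitly, through its limited curvature set $\ccalP_t$) and by greedy BFGS (through its full curvature history), with inverses $\bbH_{L,t}$ and $\bbH_{G,t}$. I would prove simultaneously that $\bbx_{L,t}=\bbx_{G,t}$ and $\bbB_{L,t}=\bbB_{G,t}$ for all $t$. The base case is immediate, since by assumption the two methods receive identical initial data $\bbx_0$, $\nabla f(\bbx_0)$, and $\bbB_0$.

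For the inductive step, assume both invariants hold at iteration $t$. Because the iterates agree, $\nabla f(\bbx_{L,t})=\nabla f(\bbx_{G,t})$, and because the Hessian approximations agree, the descent directions $\bbd_t=-\bbH_{L,t}\nabla f(\bbx_{L,t})=-\bbH_{G,t}\nabla f(\bbx_{G,t})$ coincide; applying the common stepsize $\alpha$ in \eqref{eq:LGBFGSvariable} yields $\bbx_{L,t+1}=\bbx_{G,t+1}$, re-establishing the first invariant. Next, the correction scaling $\psi_t=1+\phi_t C_M$ with $\phi_t=\|\bbx_{t+1}-\bbx_t\|_{\nabla^2 f(\bbx_t)}$ depends only on the (now matching) iterates, so it is identical for both methods; by Proposition~\ref{prop:correctStep}, applying this scaling to the stored gradient variations in LG-BFGS reproduces exactly the corrected matrix $\hat{\bbB}_t$ used by greedy BFGS, so the corrected approximations $\hat{\bbB}_{L,t}=\hat{\bbB}_{G,t}$ still agree.

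It remains to re-establish the Hessian invariant after the greedy and displacement steps. Since $\hat{\bbB}_{L,t}=\hat{\bbB}_{G,t}$ and both methods evaluate $\nabla^2 f(\bbx_{t+1})$ at the same point, the ratio $\bbs^\top \bbB_t\bbs / (\bbs^\top \nabla^2 f(\bbx_{t+1})\bbs)$ in \eqref{eq:greedyPreparation} is the same function of $\bbs$ over the common subset $\{\bbe_1,\ldots,\bbe_\tau\}$, so both methods select the identical greedy pair $(\bbs_t,\bbr_t)$ with $\bbr_t=\nabla^2 f(\bbx_{t+1})\bbs_t$. Greedy BFGS then forms $\bbB_{G,t+1}=\mathrm{BFGS}(\hat{\bbB}_{G,t},\bbs_t,\bbr_t)$, whereas LG-BFGS forms $\ccalP_{t+1}$ via the displacement rules \eqref{eq:displacementCase1}--\eqref{eq:displacementCase3}. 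These rules are designed precisely so that the Hessian approximation induced by $\ccalP_{t+1}$ equals the one induced by appending $(\bbs_t,\bbr_t)$ to the previous curvature set, i.e. $\mathrm{BFGS}(\hat{\bbB}_{L,t},\bbs_t,\bbr_t)$; combined with $\hat{\bbB}_{L,t}=\hat{\bbB}_{G,t}$ this gives $\bbB_{L,t+1}=\bbB_{G,t+1}$, closing the induction.

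The main obstacle is this last step: verifying that the three-case displacement-aggregation map leaves the induced Hessian approximation unchanged. I would not reprove it from scratch but invoke the aggregation guarantee of \citep{berahas2022limited}, checking only that its hypotheses hold here---in particular that the greedy selection confines every variable variation to the fixed $\tau$-dimensional span $\{\bbe_1,\ldots,\bbe_\tau\}$, so that case (C1) ceases to fire once $\hat{\tau}=\tau$ and the aggregation in (C2)--(C3) keeps the stored set at most $\tau$ pairs while preserving the descent direction. A secondary point of care is threading the correction scaling consistently between the two methods, which Proposition~\ref{prop:correctStep} handles cleanly at the level of gradient variations rather than through direct manipulation of the limited-memory recursion.
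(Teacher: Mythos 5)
Your proof is correct and follows essentially the same route as the paper's: an induction that carries the equality of the induced Hessian (inverse) approximations alongside the iterates, with the displacement step handled by invoking the aggregation guarantee of \citep{berahas2022limited} (Theorem 3.2 there). The only difference is cosmetic --- you state the strengthened induction hypothesis ($\bbB_{L,t}=\bbB_{G,t}$) explicitly, whereas the paper states the hypothesis only on the iterates but implicitly uses the same matrix equality in its inductive step.
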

\begin{proof}
	We start by noting that greedy BFGS updates the variable with (1) and the Hessian inverse approximation with (3). This is equivalent to updating the variable and the Hessian inverse approximation from the initial Hessian inverse approximation $\bbH_0$ with all historical curvature pairs $\{\bbs_k, \bbr_k\}_{k=0}^{t-1}$ at each iteration $t$. In this context, we can prove the lemma by proving the iterate $\bbx_{L,t}$ generated by LG-BFGS equal to the iterate $\bbx_{G, t}$ generated from the initial Hessian inverse approximation $\bbH_0$ with all historical curvature pairs $\{\bbs_k, \bbr_k\}_{k=0}^{t-1}$ for any iteration $t \ge 0$.\footnote{Without loss of generality, we assume $\{\}_{a}^b = \emptyset$, $\sum_{a}^b = 0$ and $\prod_{a}^b = 1$ if $b < a$.}
	
	Specifically, we use \textbf{induction} to prove the lemma. 
	At the initial iteration $t=0$, this conclusion holds because LG-BFGS and greedy BFGS have the same initial setting $\bbx_{L,0} = \bbx_{G,0}$. Assume that the conclusion holds at iteration $t-1 \ge 0$, i.e., the iterate $\bbx_{L,t-1}$ generated by LG-BFGS with the limited-memory curvature pairs $\ccalP_{t-1}$ equal to the iterate $\bbx_{G,t-1}$ generated by greedy BFGS with all historical curvature pairs $\{\bbs_k, \bbr_k\}_{k=0}^{t-2}$ as 
	\begin{align}\label{proof:lemma1eq1}
		\bbx_{L,t-1} = \bbx_{G,t-1}.
	\end{align}

	Consider iteration $t$ with the new curvature pair $\{\bbs_{t-1}, \bbr_{t-1}\}$. Greedy BFGS updates the historical curvature pairs by adding the new curvature pair $\{\bbs_{t-1}, \bbr_{t-1}\}$ directly and form the new historical curvature pairs $\{\bbs_k, \bbr_k\}_{k=0}^{t-1}$. LG-BFGS updates the curvature pairs by incorporating the information $\{\bbs_{t-1}, \bbr_{t-1}\}$ into $\ccalP_{t-1}$ and form the new curvature pairs $\ccalP_t$. From Theorem 3.2 in \citep{berahas2022limited}, if the Hessian inverse approximation generated from $\bbH_0$ with $\ccalP_{t-1}$ equal to that generated from $\bbH_0$ with $\{\bbs_k, \bbr_k\}_{k=0}^{t-2}$, the Hessian inverse approximation generated from $\bbH_0$ with $\ccalP_{t}$ equal to that generated from $\bbH_0$ with $\{\bbs_k, \bbr_k\}_{k=0}^{t-1}$. By using this result and \eqref{proof:lemma1eq1}, we get 
	\begin{align}\label{proof:lemma1eq2}
		\bbx_{L,t} = \bbx_{G,t}.
	\end{align}
	By combining \eqref{proof:lemma1eq1} and \eqref{proof:lemma1eq2}, we prove by induction that $\bbx_{L,t} = \bbx_{G,t}$ for any iteration $t \ge 0$, which completes the proof.
\end{proof}
\begin{lemma}[Lemma 4.3 in \citep{rodomanov2021greedy}]\label{lemma:2}
	Let $\bbx$ be a decision variable and $\bbB$ the Hessian approximation satisfying
	\begin{align}
		\nabla^2 f(\bbx) \preceq \bbB \preceq \eta \nabla^2 f(\bbx)
	\end{align}
	for some $\eta \ge 1$. Let also $\bbx_+$ be the updated decision variable as
	\begin{align}\label{eq:update}
		\bbx_+ = \bbx - \bbB^{-1} \nabla f(\bbx)
	\end{align}
	and $\lambda_f(\bbx)$ be such that $\lambda_f(\bbx) C_M \le 2$. Then, it holds that
	\begin{align}\label{eq:lemma21}
		\phi \!=\! \|\bbx_+ \!-\! \bbx\|_{\nabla^2 f(\bbx)} \!\le\! \lambda_f(\bbx)~~\text{\rm and}~~\lambda_f(\bbx_+) \!\le\! \Big(1\!+\! \frac{\lambda_f(\bbx) C_M}{2}\Big) \frac{\eta -1 + \frac{\lambda_f(\bbx) C_M}{2} }{\eta} \lambda_f(\bbx).
	\end{align}
\end{lemma}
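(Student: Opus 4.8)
The plan is to analyze a single inexact-Newton step $\bbx_+ = \bbx - \bbB^{-1}\nabla f(\bbx)$ and establish the two assertions separately, following the self-concordance calculus of \citep{rodomanov2021greedy}. Throughout I write $\bbH := \nabla^2 f(\bbx)$, $\bbg := \nabla f(\bbx)$, $\bbd := \bbx_+ - \bbx = -\bbB^{-1}\bbg$, $\bbv := \bbH^{-1/2}\bbg$ (so $\lambda_f(\bbx) = \|\bbv\|$), and introduce the congruence $\bbM := \bbH^{1/2}\bbB^{-1}\bbH^{1/2}$. The hypothesis $\bbH \preceq \bbB \preceq \eta\bbH$ is exactly the spectral sandwich $\tfrac1\eta\bbI \preceq \bbM \preceq \bbI$, which is the workhorse of the whole argument. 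The bound $\phi \le \lambda_f(\bbx)$ is then immediate: substituting $\bbd = -\bbB^{-1}\bbg$ gives $\phi^2 = \bbg^\top\bbB^{-1}\bbH\bbB^{-1}\bbg = \bbv^\top\bbM^2\bbv$, and since $\bbM \preceq \bbI$ forces $\bbM^2 \preceq \bbM \preceq \bbI$, we get $\phi^2 \le \|\bbv\|^2 = \lambda_f(\bbx)^2$. I would also record $\|\bbM\bbv\| = \phi$ for later use.

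For the second estimate I would start from the fundamental theorem of calculus. Using $\bbg = -\bbB\bbd$,
\[ \nabla f(\bbx_+) = (\bbJ - \bbB)\bbd, \qquad \bbJ := \int_0^1 \nabla^2 f(\bbx + t\bbd)\,dt. \]
Strong self-concordance \eqref{eq:SelfConcordant} yields the two-sided Hessian bounds $\tfrac{1}{1+C_M r}\bbH \preceq \nabla^2 f(\bby) \preceq (1+C_M r)\bbH$ for $r = \|\bby - \bbx\|_{\bbH}$. Applying these along $\bby = \bbx + t\bbd$ (where $r = t\phi$) and integrating in $t$ produces $-\tfrac{C_M\phi}{2}\bbH \preceq \bbJ - \bbH \preceq \tfrac{C_M\phi}{2}\bbH$, while the choice $\bby = \bbx_+$ gives $\nabla^2 f(\bbx_+)^{-1} \preceq (1+C_M\phi)\bbH^{-1}$, hence
\[ \lambda_f(\bbx_+) \le \sqrt{1+C_M\phi}\,\|\nabla f(\bbx_+)\|_{\bbH^{-1}} \le \Big(1 + \tfrac{C_M\phi}{2}\Big)\|\nabla f(\bbx_+)\|_{\bbH^{-1}}. \]
With $\phi \le \lambda_f(\bbx)$ this already delivers the leading factor $(1 + \tfrac{C_M\lambda_f(\bbx)}{2})$; the hypothesis $\lambda_f(\bbx)C_M \le 2$ enters here, keeping $C_M\phi \le 2$ so that all the self-concordance expansions stay valid.

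It remains to bound $\|\nabla f(\bbx_+)\|_{\bbH^{-1}}$. Splitting $\bbJ - \bbB = (\bbJ - \bbH) + (\bbH - \bbB)$ and rewriting in the $(\bbM,\bbv)$ coordinates, a short computation gives $(\bbH - \bbB)\bbd = \bbH^{1/2}(\bbI - \bbM)\bbv$ and $(\bbJ - \bbH)\bbd = -\bbH^{1/2}\tbK\bbM\bbv$, where $\tbK := \bbH^{-1/2}(\bbJ - \bbH)\bbH^{-1/2}$ satisfies $\|\tbK\| \le \tfrac{C_M\phi}{2}$. Hence $\|\nabla f(\bbx_+)\|_{\bbH^{-1}} = \|(\bbI - \bbM)\bbv - \tbK\bbM\bbv\|$, in which the approximation-error part obeys $\|(\bbI - \bbM)\bbv\| \le \tfrac{\eta-1}{\eta}\lambda_f(\bbx)$ (because $\|\bbI - \bbM\| \le 1 - \tfrac1\eta$) and the curvature-variation part obeys $\|\tbK\bbM\bbv\| \le \tfrac{C_M\phi}{2}\|\bbM\bbv\| = \tfrac{C_M\phi^2}{2}$. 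Feeding these into the previous display and using $\phi \le \lambda_f(\bbx)$ in the residual places yields an estimate of the claimed shape.

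The step I expect to be the main obstacle is the sharp constant — recovering exactly the factor $\tfrac{\eta - 1 + C_M\lambda_f(\bbx)/2}{\eta}$, in particular the $1/\eta$ multiplying the self-concordance term, rather than the looser $\tfrac{\eta-1}{\eta}\lambda_f(\bbx) + \tfrac{C_M}{2}\lambda_f(\bbx)^2$ that a naive triangle inequality produces. The refinement must exploit that the two pieces cannot both be large at once: since $\bbM(\bbI - \bbM) \succeq 0$, one has $\|(\bbI - \bbM)\bbv\|^2 + \|\bbM\bbv\|^2 \le \|\bbv\|^2$, so the error and curvature terms have to be estimated jointly rather than separately, as is done in the proof of Lemma~4.3 in \citep{rodomanov2021greedy}. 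Getting that joint bookkeeping to collapse into the clean stated coefficient is the delicate part; the remaining manipulations are routine Loewner-order and monotonicity arguments.
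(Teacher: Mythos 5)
First, a point of comparison that matters here: the paper does not prove this statement. Lemma~\ref{lemma:2} is imported verbatim from \citep{rodomanov2021greedy} and used as a black box in the proofs of Propositions~\ref{prop:linearConvergence} and \ref{prop:HessianUpdate}, so there is no in-paper proof to measure your argument against; you are in effect re-proving the external result. On its own terms, your proposal correctly disposes of the first claim ($\phi^2=\bbv^\top\bbM^2\bbv\le\|\bbv\|^2$ from $\bbM\preceq\bbI$) and correctly assembles the standard ingredients for the second one (Taylor remainder $\bbJ$, the two-sided self-concordance bounds, the prefactor $1+\tfrac{C_M\phi}{2}$ from changing the metric at $\bbx_+$). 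The problem is the step you yourself flag as delicate: it is a genuine gap, and the repair you sketch provably cannot close it.

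Your route reduces the second claim to showing $a+\tfrac{C_M}{2}b^2\le\tfrac{\eta-1+C_M\lambda_f(\bbx)/2}{\eta}\,\lambda_f(\bbx)$, where $a=\|(\bbI-\bbM)\bbv\|$ and $b=\|\bbM\bbv\|=\phi$, using only $\tfrac1\eta\bbI\preceq\bbM\preceq\bbI$, $\|\bbv\|=\lambda_f(\bbx)$, and your joint constraint $a^2+b^2\le\|\bbv\|^2$. This is false. Take $\eta$ large, $C_M\lambda_f(\bbx)=2$ (allowed by the hypothesis), $\bbM$ with eigenvalues $1/\eta$ and $1$, and $\bbv$ split equally between the two eigenvectors: then $a\approx\lambda_f(\bbx)/\sqrt2$ and $b^2\approx\lambda_f(\bbx)^2/2$, so $a+\tfrac{C_M}{2}b^2\approx(\tfrac{1}{\sqrt 2}+\tfrac12)\lambda_f(\bbx)\approx 1.21\,\lambda_f(\bbx)$, while the target tends to $\lambda_f(\bbx)$. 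Moreover one can choose an admissible symmetric $\tbK$ with $\|\tbK\|\le\tfrac{C_M\phi}{2}$ for which $-\tbK\bbM\bbv$ is parallel to $(\bbI-\bbM)\bbv$, so the triangle inequality in $\|(\bbI-\bbM)\bbv-\tbK\bbM\bbv\|$ is essentially tight; no bookkeeping in the scalars $a$, $b$, $\|\bbv\|$, $\|\tbK\|$ can recover the stated coefficient. The conclusion is that the decomposition $\bbJ-\bbB=(\bbJ-\bbH)+(\bbH-\bbB)$ followed by separate norm estimates is the wrong starting point. The argument in \citep{rodomanov2021greedy} keeps $\bbJ-\bbB$ as a single operator and uses that, in an appropriate metric, its spectrum is sandwiched in $\bigl[-\tfrac{\eta-1+C_M\phi/2}{\eta},\ \tfrac{C_M\phi}{2}\bigr]$; the hypothesis $\lambda_f(\bbx)C_M\le 2$ enters precisely to guarantee $\tfrac{C_M\phi}{2}\le\tfrac{\eta-1+C_M\phi/2}{\eta}$, so that a single coefficient $\tfrac{\eta-1+C_M\phi/2}{\eta}$ controls both edges. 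You would need to rebuild the estimate along those lines rather than patch the triangle inequality.
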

\begin{lemma}[Lemma 4.4 in \citep{rodomanov2021greedy}]\label{lemma:3}
	Let $\bbx$ be a decision variable and $\bbB$ the Hessian approximation satisfying
	\begin{align}
		\nabla^2 f(\bbx) \preceq \bbB \preceq \eta \nabla^2 f(\bbx)
	\end{align}
	for some $\eta \ge 1$. Let also $\bbx_+$ be the updated decision variable [cf. \eqref{eq:update}] and $\phi = \|\bbx_+ - \bbx \|_{\nabla^2 f(\bbx)}$ be the weighted update difference. Then, it holds that
	\begin{align}\label{eq:lemma31}
		\nabla^2 f(\bbx_+) \preceq (1+C_M \phi) \bbB = \hat{\bbB}
	\end{align}
	and the Hessian approximation $\bbB_+$ updated by the BFGS on $\hat{\bbB}$ with the curvature pair $\{\bbs,\bbr\}$ satisfies
	\begin{align}\label{eq:lemma32}
		\nabla^2 f(\bbx_+) \preceq \text{\rm BFGS}\big(\hat{\bbB}, \bbs, \bbr \big) \preceq \eta (1 + C_M \phi)^2 \nabla^2 f(\bbx_+).
	\end{align}
\end{lemma}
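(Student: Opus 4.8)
The plan is to establish the two displays \eqref{eq:lemma31} and \eqref{eq:lemma32} in turn, combining the strong self-concordance inequality \eqref{eq:SelfConcordant} (which controls how $\nabla^2 f$ deforms between $\bbx$ and $\bbx_+$) with the monotonicity of the BFGS operator. The structural fact that makes the BFGS step tractable is that the greedy curvature pair obeys the \emph{exact} secant relation $\bbr = \nabla^2 f(\bbx_+)\bbs$; hence, writing $\bbA := \nabla^2 f(\bbx_+)$, the matrix $\bbA$ is the true target of $\text{BFGS}(\hat{\bbB},\bbs,\bbr)$ in the sense that $\bbA\bbs = \bbr$, and I may invoke the property that a BFGS step with a correct secant never breaks a two-sided sandwich placed around its target.

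First I would prove \eqref{eq:lemma31}. Instantiating \eqref{eq:SelfConcordant} with $\bbx_1 = \bbx_+$ and $\bbx_2 = \bby = \bbz = \bbx$ gives $\nabla^2 f(\bbx_+) - \nabla^2 f(\bbx) \preceq C_M \|\bbx_+ - \bbx\|_{\nabla^2 f(\bbx)}\,\nabla^2 f(\bbx) = C_M\phi\,\nabla^2 f(\bbx)$, hence $\nabla^2 f(\bbx_+) \preceq (1 + C_M\phi)\nabla^2 f(\bbx)$. Chaining with the hypothesis $\nabla^2 f(\bbx) \preceq \bbB$ yields $\nabla^2 f(\bbx_+) \preceq (1 + C_M\phi)\bbB = \hat{\bbB}$, which is \eqref{eq:lemma31} and simultaneously the left inequality of \eqref{eq:lemma32}.

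Next, for the right inequality of \eqref{eq:lemma32}, I would sandwich $\hat{\bbB}$ tightly about $\bbA$. The lower side $\bbA \preceq \hat{\bbB}$ is exactly \eqref{eq:lemma31}. For the upper side, I apply \eqref{eq:SelfConcordant} in the reverse direction with $\bbx_1 = \bbx$, $\bbx_2 = \bbz = \bbx_+$, keeping the metric point $\bby = \bbx$ so the weight stays $\phi$; this gives $\nabla^2 f(\bbx) \preceq (1 + C_M\phi)\,\bbA$. Combined with $\bbB \preceq \eta\,\nabla^2 f(\bbx)$ and $\hat{\bbB} = (1 + C_M\phi)\bbB$, I obtain $\hat{\bbB} \preceq \eta(1 + C_M\phi)^2\,\bbA$. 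Setting $\eta' := \eta(1 + C_M\phi)^2$, everything reduces to the clean claim: if $\bbA \preceq \hat{\bbB} \preceq \eta'\bbA$ and $\bbA\bbs = \bbr$, then $\bbA \preceq \text{BFGS}(\hat{\bbB},\bbs,\bbr) \preceq \eta'\bbA$.

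This last claim is the crux, and I would prove it by passing to the Hessian-\emph{inverse} form of the update, where the error relative to $\bbA^{-1}$ is an exact congruence. Writing $\bbH = \hat{\bbB}^{-1}$ and $\bbV = \bbI - \frac{\bbs\bbr^\top}{\bbs^\top\bbr}$ (which satisfies $\bbV\bbs = \bb0$), the inverse BFGS formula \eqref{eq:InverseBFGS} together with $\bbs = \bbA^{-1}\bbr$ collapses the quadratic and rank-one correction terms into the single centered identity $\text{BFGS}(\hat{\bbB},\bbs,\bbr)^{-1} - \bbA^{-1} = \bbV(\bbH - \bbA^{-1})\bbV^\top$. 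Since a congruence preserves the positive semidefinite cone, the relation $\bbH \preceq \bbA^{-1}$ (i.e. $\bbA \preceq \hat{\bbB}$ inverted) transports to $\text{BFGS}(\hat{\bbB},\bbs,\bbr)^{-1} \preceq \bbA^{-1}$, giving the lower bound $\bbA \preceq \text{BFGS}(\hat{\bbB},\bbs,\bbr)$; and the relation $\bbH \succeq \frac{1}{\eta'}\bbA^{-1}$, together with the elementary fact $\bbV\bbA^{-1}\bbV^\top \preceq \bbA^{-1}$, transports to $\text{BFGS}(\hat{\bbB},\bbs,\bbr)^{-1} \succeq \frac{1}{\eta'}\bbA^{-1}$, giving the upper bound $\text{BFGS}(\hat{\bbB},\bbs,\bbr) \preceq \eta'\bbA$. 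Substituting $\bbA = \nabla^2 f(\bbx_+)$ yields \eqref{eq:lemma32}. The main obstacle is verifying this inverse-congruence identity carefully — in particular checking that the exact secant condition $\bbr = \bbA\bbs$ is precisely what makes the update center at $\bbA^{-1}$ — after which both one-sided bounds fall out of a single representation; this is exactly the content of Lemma~4.4 in \citep{rodomanov2021greedy}, which I would cite directly if a self-contained derivation is not wanted.
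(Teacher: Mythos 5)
Your proof is correct. Note first that the paper itself does not prove this lemma at all: it is imported verbatim as Lemma~4.4 of \citep{rodomanov2021greedy}, so there is no in-paper argument to compare against. Your reconstruction is a valid self-contained derivation and follows the standard route. The two self-concordance instantiations are legitimate uses of \eqref{eq:SelfConcordant} (taking $\bbx_1=\bbx_+$, $\bbx_2=\bby=\bbz=\bbx$ for $\nabla^2 f(\bbx_+)\preceq(1+C_M\phi)\nabla^2 f(\bbx)$, and $\bbx_1=\bbx$, $\bbx_2=\bbz=\bbx_+$, $\bby=\bbx$ for $\nabla^2 f(\bbx)\preceq(1+C_M\phi)\nabla^2 f(\bbx_+)$), and they correctly yield the sandwich $\bbA\preceq\hat{\bbB}\preceq\eta(1+C_M\phi)^2\bbA$ with $\bbA=\nabla^2 f(\bbx_+)$. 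The crux — that a BFGS step with exact secant pair $\bbr=\bbA\bbs$ preserves a two-sided sandwich around $\bbA$ — is also right, and your congruence identity checks out: with $\bbV=\bbI-\bbs\bbr^\top/(\bbs^\top\bbr)$ one has $\bbV\bbA^{-1}\bbV^\top=\bbA^{-1}-\bbs\bbs^\top/(\bbs^\top\bbr)$, which together with \eqref{eq:InverseBFGS} gives exactly $\text{BFGS}(\hat{\bbB},\bbs,\bbr)^{-1}-\bbA^{-1}=\bbV(\hat{\bbB}^{-1}-\bbA^{-1})\bbV^\top$; both one-sided bounds then follow as you describe (the upper one additionally using $\bbV\bbA^{-1}\bbV^\top\preceq\bbA^{-1}$, which holds since $\bbs^\top\bbr=\bbs^\top\bbA\bbs>0$). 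You correctly identify that the exact secant relation $\bbr=\nabla^2 f(\bbx_{+})\bbs$ from the greedy step is what centers the update at $\bbA^{-1}$; this hypothesis is implicit in the lemma statement but essential, and is worth flagging as you did. In short: correct proof, same underlying mechanism as the cited source, supplied where the paper only cites.
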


\begin{proof}[Proof of Proposition 2]
	From Lemma \ref{lemma:1}, we know that the iterates generated by LG-BFGS is equivalent to the iterates generated by greedy BFGS, if both perform greedy selection in the same subset $\{\bbe_i\}_{i=1}^\tau$ of memory size $\tau$. In this context, we can prove the linear convergence of the iterates generated by LG-BFGS by proving the linear convergence of the iterates generated by the corresponding greedy BFGS, alternatively. 	
	
	We start by defining the concise notation $\lambda_t = \lambda_f(\bbx_t)$, $\phi_t = \|\bbx_{t+1} - \bbx_t\|_{\nabla^2 f(\bbx_t)}$ and 
	\begin{align}\label{eq:etaRepresentation}
		\eta_t = e^{2C_M \sum_{k=0}^{t-1}\lambda_k}\frac{L}{\mu}
	\end{align}
	for convenience of expression. We use \textbf{induction} to prove the following statement
	\begin{align}\label{proof:prop2eq0}
		\nabla^2 f(\bbx_t) &\preceq \bbB_t \preceq \eta_t \nabla^2 f(\bbx_t), \\
		\label{proof:prop2eq05}\lambda_t &\le (1 - \frac{\mu}{2 L})^t \lambda_0
	\end{align}
	for any iteration $t \ge 0$. For the initial iteration $t=0$ with the initial condition, we have 
	\begin{align}\label{proof:prop2eq1}
		\nabla^2 f(\bbx_0) \preceq \bbB_0 \preceq \frac{L}{\mu}\nabla^2 f(\bbx_0) = \eta_0 \nabla^2 f(\bbx_0)
	\end{align}
	and 
	\begin{align}\label{proof:prop2eq2}
		\lambda_0 \le (1 - \frac{\mu}{2 L})^0 \lambda_0 = \lambda_0.
	\end{align}
	Thus, \eqref{proof:prop2eq0} and \eqref{proof:prop2eq05} hold for $t=0$.
	
	Assume that for iteration $t-1 \ge 0$, we have 
	\begin{align}\label{proof:prop2eq3}
		\nabla^2 f(\bbx_k) &\preceq \bbB_k \preceq \eta_k \nabla^2 f(\bbx_k),\\
		\label{proof:prop2eq35} \lambda_k &\le (1 - \frac{\mu}{2 L})^{k} \lambda_0
	\end{align}
	for all $0 \le k \le t-1$, and consider iteration $t$. By using Lemma \ref{lemma:2} with the condition \eqref{proof:prop2eq3}, we have
	\begin{align}\label{proof:prop2eq4}
		\lambda_t \le \Big(1+ \frac{\lambda_{t-1} C_M}{2}\Big) \frac{\eta_{t-1} -\left(1 - \frac{\lambda_{t-1} C_M}{2}\right) }{\eta_{t-1}} \lambda_{t-1}.
	\end{align}
	By using the fact $C_M \lambda_{t-1} \le C_M \lambda_0 \le 1$ from the initial condition and the inequality $1-x \ge e^{-2t}$ for any $0 \le x \le 1/2$, we have
	\begin{align}\label{proof:prop2eq5}
		\frac{1 - \frac{\lambda_{t-1} C_M}{2}}{\eta_{t-1}} \ge \frac{e^{-\lambda_{t-1} C_M}}{\eta_{t-1}}. 
	\end{align}
	By substituting the representation of $\eta_{t-1}$ into \eqref{proof:prop2eq5}, we get
	\begin{align}\label{proof:prop2eq6}
		\frac{1 - \frac{\lambda_{t-1} C_M}{2}}{\eta_{t-1}} \ge e^{-C_M \lambda_{t-1} - 2 C_M \sum_{k=0}^{t-2} \lambda_k} \frac{\mu}{L} \ge e^{- 2 C_M \sum_{k=0}^{t-1} \lambda_k} \frac{\mu}{L}.
	\end{align}
	The term $2 C_M \sum_{k=0}^{t-1}\lambda_k$ in \eqref{proof:prop2eq6} can be bounded as
	\begin{align}\label{proof:prop2eq7}
		2 C_M \sum_{k=0}^{t-1} \lambda_i \le 2 C_M \sum_{k=0}^{t-1} (1-\frac{\mu}{2L})^k \lambda_0 \le \frac{4L}{\mu}C_M \lambda_0 \le \ln \frac{3}{2}
	\end{align}
	where the condition \eqref{proof:prop2eq35} is used in the second inequality and the initial condition is used in the last inequality. By substituting \eqref{proof:prop2eq7} into \eqref{proof:prop2eq6}, we have
	\begin{align}\label{proof:prop2eq8}
		\frac{1 - \frac{\lambda_{t-1} C_M}{2}}{\eta_{t-1}} \ge \frac{2\mu}{3L}.
	\end{align}
	From the condition \eqref{proof:prop2eq35} and the initial condition, we get
	\begin{align}\label{proof:prop2eq9}
		\frac{\lambda_{t-1} C_M}{2} \le \frac{\lambda_{0} C_M}{2} \le \frac{\ln \frac{3}{2} \ \mu }{8 L} \le \frac{\mu}{16 L}
	\end{align}
	where the inequality $\ln (1 + x) \le x$ for any $x \ge 0$ is used in the last inequality. By substituting \eqref{proof:prop2eq9} and \eqref{proof:prop2eq8} into \eqref{proof:prop2eq4}, we have
	\begin{align}\label{proof:prop2eq10}
		\lambda_t \le \Big(1+ \frac{\mu}{16 L}\Big) \Big( 1 - \frac{2\mu}{3L} \Big) \lambda_{t-1} \le \Big( 1 - \frac{\mu}{2L} \Big) \lambda_{t-1} \le \Big( 1 - \frac{\mu}{2L} \Big)^t \lambda_0
	\end{align}
 where the condition \eqref{proof:prop2eq35} is used in the last inequality. By using Lemma \ref{lemma:3} with the condition \eqref{proof:prop2eq3}, we have
	\begin{align}\label{proof:prop2eq11}
		&\nabla^2 f(\bbx_{t}) \preceq \bbB_{t} \preceq (1+\phi_{t-1} C_M)^2 \eta_{t-1} \nabla^2 f(\bbx_{t}).
	\end{align}
	By using the result $\phi_{t-1} \le \lambda_{t-1}$ from Lemma \ref{lemma:2} and the inequality $(1+x) \le e^{2x}$, we get
	\begin{align}\label{proof:prop2eq12}
		\bbB_{t} &\preceq (1+\lambda_{t-1} C_M)^2 \eta_{t-1} \nabla^2 f(\bbx_{t}) \preceq e^{2C_M \lambda_{t-1}} \eta_{t-1} \nabla^2 f(\bbx_{t+1}).
	\end{align}
	By further substituting the representation of $\eta_{t-1}$ into \eqref{proof:prop2eq12}, we have
	\begin{align}\label{proof:prop2eq13}
		\nabla^2 f(\bbx_{t}) \preceq \bbB_{t} \preceq e^{2 C_M \sum_{k=0}^{t-1}\lambda_k}\frac{L}{\mu}\nabla^2 f(\bbx_{t}) = \eta_{t} \nabla^2 f(\bbx_{t}). 
	\end{align}
	By combining \eqref{proof:prop2eq1}, \eqref{proof:prop2eq2}, \eqref{proof:prop2eq10} and \eqref{proof:prop2eq13}, we prove \eqref{proof:prop2eq0} and \eqref{proof:prop2eq05} by induction, which completes the proof.
\end{proof}

\section{Proof of Proposition 3}

We need the following lemma to complete the proof.

\begin{lemma}[Lemma 2.4 in \citep{rodomanov2021greedy}]\label{lemma:4}
	Consider two positive definite matrices $\bbA \preceq \bbD$. For any vector $\bbs \in \mathbb{R}^d$, it holds that
	\begin{align}
		\sigma(\bbA, \bbD) - \sigma(\bbA, {\rm BFGS}(\bbD, \bbs, \bbA \bbs)) \ge \frac{\bbs^\top (\bbD - \bbA) \bbs}{\bbs^\top \bbA \bbs}
	\end{align}
     where ${\rm BFGS}(\bbB, \bbs, \bbA \bbs)$ is the BFGS update on $\bbB$ with the curvature pair $\{\bbs, \bbA\bbs\}$.
\end{lemma}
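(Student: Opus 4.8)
The plan is to evaluate the trace gap $\sigma(\bbA,\bbD)-\sigma(\bbA,\mathrm{BFGS}(\bbD,\bbs,\bbA\bbs))$ in closed form and then reduce the claimed inequality to a single Cauchy--Schwarz estimate. First I would instantiate the BFGS formula \eqref{eq:BFGS} with Hessian approximation $\bbD$ and curvature pair $\{\bbs,\bbA\bbs\}$; since $\bbA,\bbD$ are symmetric and $(\bbA\bbs)^\top\bbs=\bbs^\top\bbA\bbs$, the secant term $\bbr\bbr^\top/(\bbr^\top\bbs)$ with $\bbr=\bbA\bbs$ becomes $\bbA\bbs\bbs^\top\bbA/(\bbs^\top\bbA\bbs)$, so that $\bbD_+:=\mathrm{BFGS}(\bbD,\bbs,\bbA\bbs)=\bbD-\tfrac{\bbD\bbs\bbs^\top\bbD}{\bbs^\top\bbD\bbs}+\tfrac{\bbA\bbs\bbs^\top\bbA}{\bbs^\top\bbA\bbs}$. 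Because $\sigma(\bbA,\bbM)=\mathrm{Tr}(\bbA^{-1}\bbM)-d$ is affine in $\bbM$, the gap collapses to $\mathrm{Tr}\!\big(\bbA^{-1}(\bbD-\bbD_+)\big)$ with the dimension offsets $d$ cancelling automatically, and $\bbD-\bbD_+=\tfrac{\bbD\bbs\bbs^\top\bbD}{\bbs^\top\bbD\bbs}-\tfrac{\bbA\bbs\bbs^\top\bbA}{\bbs^\top\bbA\bbs}$ is a difference of two rank-one terms.

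Next I would trace out each rank-one piece using $\mathrm{Tr}(\bbA^{-1}\bbu\bbv^\top)=\bbv^\top\bbA^{-1}\bbu$. The curvature term contributes $\mathrm{Tr}\!\big(\bbA^{-1}\tfrac{\bbA\bbs\bbs^\top\bbA}{\bbs^\top\bbA\bbs}\big)=\tfrac{\bbs^\top\bbA\bbs}{\bbs^\top\bbA\bbs}=1$, while the curvature-removal term contributes $\tfrac{\bbs^\top\bbD\bbA^{-1}\bbD\bbs}{\bbs^\top\bbD\bbs}$. Hence the gap equals exactly $\tfrac{\bbs^\top\bbD\bbA^{-1}\bbD\bbs}{\bbs^\top\bbD\bbs}-1$. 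On the other hand the right-hand side of the claim rewrites as $\tfrac{\bbs^\top(\bbD-\bbA)\bbs}{\bbs^\top\bbA\bbs}=\tfrac{\bbs^\top\bbD\bbs}{\bbs^\top\bbA\bbs}-1$, so the lemma reduces to the scalar inequality $\tfrac{\bbs^\top\bbD\bbA^{-1}\bbD\bbs}{\bbs^\top\bbD\bbs}\ge\tfrac{\bbs^\top\bbD\bbs}{\bbs^\top\bbA\bbs}$, i.e.\ to $(\bbs^\top\bbD\bbA^{-1}\bbD\bbs)(\bbs^\top\bbA\bbs)\ge(\bbs^\top\bbD\bbs)^2$.

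The crux --- although it is short --- is recognizing this last inequality as Cauchy--Schwarz in the $\bbA^{-1}$ inner product, or equivalently setting $\bbu=\bbA^{-1/2}\bbD\bbs$ and $\bbv=\bbA^{1/2}\bbs$, both well defined since $\bbA\succ0$. Then $\|\bbu\|^2=\bbs^\top\bbD\bbA^{-1}\bbD\bbs$, $\|\bbv\|^2=\bbs^\top\bbA\bbs$, and $\bbu^\top\bbv=\bbs^\top\bbD\bbs$, so $(\bbu^\top\bbv)^2\le\|\bbu\|^2\|\bbv\|^2$ is precisely the required bound, with equality iff $\bbD\bbs\parallel\bbA\bbs$. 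I do not anticipate a genuine obstacle: positivity of all denominators follows from $\bbA,\bbD\succ0$ and $\bbs\neq0$ (and $\bbs=0$ makes both sides vanish), so the only point needing care is the sign bookkeeping in forming $\bbD-\bbD_+$, so that the removal term survives with the positive sign and the curvature term yields the clean $-1$.
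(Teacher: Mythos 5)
Your proof is correct. The paper itself never proves this lemma---it is imported verbatim by citation from \citep{rodomanov2021greedy}---and your argument (instantiate \eqref{eq:BFGS} with $\bbr=\bbA\bbs$, use linearity of the trace so the gap collapses to $\frac{\bbs^\top\bbD\bbA^{-1}\bbD\bbs}{\bbs^\top\bbD\bbs}-1$, then close with Cauchy--Schwarz in the $\bbA^{-1}$ inner product) is exactly the proof given in that reference, so you have supplied the standard argument the paper treats as a black box. One small bonus worth noting: your derivation never uses $\bbA\preceq\bbD$, so the inequality holds for arbitrary positive definite $\bbA,\bbD$; that hypothesis only serves to make the right-hand side a nonnegative progress measure.
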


\begin{proof}[Proof of Proposition 3]
	From Lemma \ref{lemma:3}, we know that $\nabla^2 f(\bbx_+) \preceq \hat{\bbB}$. Let $\bbB_+ = \text{\rm BFGS}(\hat{\bbB}, \bbs, \bbr)$ be the updated Hessian approximation matrix, where $\{\bbs, \bbr\}$ are the curvature pair selected greedily from the subset $\{\bbe_i\}_{i=1}^\tau$, i.e.,
	\begin{align}\label{proof:prop3eq1}
		\bbs = \argmax_{\bbs \in \{\bbe_1,\ldots,\bbe_\tau\}} \frac{\bbs^\top \hat{\bbB} \bbs}{\bbs^\top \nabla^2 f(\bbx_+) \bbs}.
	\end{align}
	Denote by $\sigma_{\bbx_+}(\bbB_+)$, $\sigma_{\bbx_+}(\hat{\bbB})$ and $\sigma(\bbB)$ the concise notation of $\sigma(\nabla^2 f(\bbx_+), \bbB_+)$, $\sigma(\nabla^2 f(\bbx_+), \hat{\bbB})$ and $\sigma(\nabla^2 f(\bbx), \bbB)$. By using Lemma \ref{lemma:4} with $\bbA = \nabla^2 f(\bbx_+)$ and $\bbD = \hat{\bbB}$, we have
	\begin{align}\label{proof:prop3eq2}
		\sigma_{\bbx_+}(\hat{\bbB}) - \sigma_{\bbx_+}(\bbB_+) \ge \frac{\bbs^\top\big(\hat{\bbB} - \nabla^2 f(\bbx_+) \big) \bbs}{\bbs^\top \nabla^2 f(\bbx_+)\bbs}.
	\end{align}
	By substituting \eqref{proof:prop3eq1} into \eqref{proof:prop3eq2}, we have
	\begin{align}\label{proof:prop3eq3}
		\sigma_{\bbx_+}(\hat{\bbB}) - \sigma_{\bbx_+}(\bbB_+) \ge \max_{1 \le i \le \tau} \frac{\bbe_i^\top \big(\hat{\bbB} - \nabla^2 f(\bbx_+) \big) \bbe_i}{\bbe_i^\top \nabla^2 f(\bbx_+)\bbe_i}. 
	\end{align}
	Let $\bbE = \hat{\bbB} - \nabla^2 f(\bbx_+)$ be the approximation error matrix. From Assumption 1, we have
	\begin{align}\label{proof:prop3eq4}
		\mu \bbI \preceq \nabla^2 f(\bbx_+) \preceq L \bbI
	\end{align}
	where $\bbI$ is the identity matrix. Substituting \eqref{proof:prop3eq4} into \eqref{proof:prop3eq3} yields
	\begin{align}\label{proof:prop3eq5}
		\sigma_{\bbx_+}(\hat{\bbB}) - \sigma_{\bbx_+}(\bbB_+) \ge \frac{1}{L}\max_{1 \le i \le \tau} \bbe_i^\top \bbE \bbe_i. 
	\end{align}
	Let $\beta(\bbe_i)$ be the relative condition number of the basis vector $\bbe_i$ w.r.t. $\bbE$ for $i=1,\ldots,\tau$. From the definition of the relative condition number [Def. 1], we have
	\begin{align}\label{proof:prop3eq6}
		\bbe_i^\top \bbE \bbe_i = \frac{1}{\beta(\bbe_i)}\max_{1 \le i \le d} \bbe_i^\top \bbE \bbe_i.
	\end{align}
	By substituting \eqref{proof:prop3eq6} into \eqref{proof:prop3eq5}, we have
	\begin{align}\label{proof:prop3eq7}
		\sigma_{\bbx_+}(\hat{\bbB}) \!-\! \sigma_{\bbx_+}(\bbB_+) &\ge\! \frac{1}{L}\!\max_{1 \le i \le \tau}\! \Big(\frac{1}{\beta(\bbe_i)}\max_{1 \le i \le d} \bbe_i^\top \bbE \bbe_i \Big) \\ 
		&= \frac{1}{L \min_{1 \le i \le \tau} \beta(\bbe_i)}\max_{1 \le i \le d} \bbe_i^\top \bbE \bbe_i = \frac{1}{L \beta_\tau}\max_{1 \le i \le d} \bbe_i^\top \bbE \bbe_i \nonumber
	\end{align}
	where $\beta_\tau$ is the minimal relative condition number of the subset $\{\bbe_i\}_{i=1}^\tau$ w.r.t. $\bbE$. Since $\max_{1 \le i \le d} \bbe_i^\top \bbE \bbe_i \ge \bbe_i^\top \bbE \bbe_i$ for all $i=1,\ldots,d$, we get
	\begin{align}\label{proof:prop3eq8}
		\sigma_{\bbx_+}(\hat{\bbB}) \!-\! \sigma_{\bbx_+}(\bbB_+) \ge \frac{1}{L \beta_\tau d}\sum_{i=1}^d \bbe_i^\top \bbE \bbe_i = \frac{1}{L \beta_\tau d}\sum_{i=1}^d {\rm Tr} (\bbe_i \bbe_i^\top, \bbE)
	\end{align}
	where $\rm Tr(\cdot, \cdot)$ represents the trace operation. From the linearity of the trace operation, we get
	\begin{align}\label{proof:prop3eq9}
		\sigma_{\bbx_+}(\hat{\bbB}) \!-\! \sigma_{\bbx_+}(\bbB_+) \!\ge\! \frac{1}{\beta_\tau L d} {\rm Tr} \big(\sum_{i=1}^d\! \bbe_i \bbe_i^\top\!, \bbE\big) \!=\! \frac{1}{\beta_\tau L d} {\rm Tr} (\bbI, \bbE) \!\ge\! \frac{\mu}{\beta_\tau L d} {\rm Tr} (\nabla^2 f(\bbx_+)^{-1}\!\!, \bbE)
	\end{align}
	where the condition \eqref{proof:prop3eq4} is used in the last inequality. From the definition $\sigma_{\bbx_+}(\hat{\bbB}) = {\rm Tr} (\nabla^2 f(\bbx_+)^{-1}\!, \bbE)$, we have
	\begin{align}\label{proof:prop3eq10}
		\sigma_{\bbx_+}(\bbB_+) \le (1 - \frac{\mu}{\beta_\tau L d}) \sigma_{\bbx_+}(\hat{\bbB}).
	\end{align}
	We then characterize the relationship between $\sigma_{\bbx_+}(\hat{\bbB})$ and $\sigma_{\bbx}(\bbB)$. We can represent $\sigma_{\bbx_+}(\hat{\bbB})$ by definition as
	\begin{align}\label{proof:prop3eq11}
		\sigma_{\bbx_+}\!(\hat{\bbB}) = {\rm Tr} \big(\nabla^2 f(\bbx_+)^{-1}\hat{\bbB}\big) - d = (1 + \phi C_M) {\rm Tr} \big(\nabla^2 f(\bbx_+)^{-1} \bbB\big) - d
	\end{align}
	where $\hat{\bbB} = (1 + \phi C_M) \bbB$ is used in the last equality. Since $1 + \phi C_M \ge 1$, we can upper bound \eqref{proof:prop3eq11} as
	\begin{align}\label{proof:prop3eq12}
		\sigma_{\bbx_+}\!(\hat{\bbB}) \le (1 + \phi C_M)^2{\rm Tr} \big(\nabla^2 f(\bbx_+)^{-1} \bbB\big) - d = (1 + \phi C_M)^2(\sigma_{\bbx}(\bbB) + d) - d.
	\end{align}
	Expanding the terms in \eqref{proof:prop3eq12} yields
	\begin{align}\label{proof:prop3eq13}
		\sigma_{\bbx_+}\!(\hat{\bbB}) &\le (1 + \phi C_M)^2\sigma_{\bbx}(\bbB) + d\big((1 + \phi C_M)^2 -1\big)\\
		&= (1 + \phi C_M)^2\sigma_{\bbx}(\bbB) + 2 d \phi C_M \Big( 1 + \frac{\phi C_M}{2} \Big) \nonumber\\
		&\le (1 + \phi C_M)^2\Big(\sigma_{\bbx}(\bbB) + \frac{2d \phi C_M}{1 + \phi C_M} \Big). \nonumber
	\end{align}
	By substituting \eqref{proof:prop3eq13} into \eqref{proof:prop3eq10}, we complete the proof
	\begin{align}\label{proof:prop3eq14}
		\sigma_{\bbx_+}\!(\bbB_+) \le \Big(1 - \frac{\mu}{\beta_\tau d L}\Big)(1 + \phi C_M)^2\Big(\sigma_{\bbx}(\bbB) + \frac{2d\phi C_M}{1+\phi C_M} \Big).
	\end{align}
\end{proof}

\section{Proof of Theorem 1}

From Lemma \ref{lemma:1}, we know that the iterates generated by LG-BFGS is equivalent to the iterates generated by greedy BFGS, if both perform greedy selection in the same subset $\{\bbe_i\}_{i=1}^\tau$ of memory size $\tau$. In this context, we can prove the superlinear convergence of the iterates generated by LG-BFGS by proving the superlinear convergence of the iterates generated by the corresponding greedy BFGS, alternatively. 	

Denote by $\lambda_t$ and $\sigma_t$ the concise notation of $\lambda_f(\bbx_t)$ and $\sigma(\nabla^2 f(\bbx_t), \bbB_t)$. We start by noting that 
\begin{align}\label{proof:thm1eq1}
	\sigma_t = {\rm Tr}(\nabla^2 f(\bbx_t)^{-1} \bbB_t) - d = {\rm Tr}\big(\nabla^2 f(\bbx_t)^{-1} (\bbB_t - \nabla^2 f(\bbx_t) \big)
\end{align}
where $\bbB_t - \nabla^2 f(\bbx_t)$ is positive semidefinite from Proposition 2 [cf. \eqref{proof:prop2eq0}]. Since the maximal eigenvalue of $\nabla^2 f(\bbx_t)^{-1} \big(\bbB_t - \nabla^2 f(\bbx_t)\big)$ is bounded by the trace of $\nabla^2 f(\bbx_t)^{-1} (\bbB_t - \nabla^2 f(\bbx_t)$, i.e., the sum of eigenvalues of $\nabla^2 f(\bbx_t)^{-1} (\bbB_t - \nabla^2 f(\bbx_t)$, we have
\begin{align}\label{proof:thm1eq2}
	\nabla^2 f(\bbx_t)^{-1} \big(\bbB_t - \nabla^2 f(\bbx_t)\big) \preceq {\rm Tr}\big(\nabla^2 f(\bbx_t)^{-1} (\bbB_t - \nabla^2 f(\bbx_t) \big) \bbI.
\end{align}
By multiplying positive definite matrix $\nabla^2 f(\bbx_t)$ on both sides of \eqref{proof:thm1eq2}, we have
\begin{align}\label{proof:thm1eq3}
	\bbB_t \preceq \Big( 1+ {\rm Tr}\big(\nabla^2 f(\bbx_t)^{-1} (\bbB_t - \nabla^2 f(\bbx_t) \big)\Big) \nabla^2 f(\bbx_t) = ( 1+ \sigma_t) \nabla^2 f(\bbx_t)
\end{align}
and
\begin{align}\label{proof:thm1eq4}
	\nabla^2 f(\bbx_t) \preceq \bbB_t \preceq (1+ \sigma_t)\nabla^2 f(\bbx_t).
\end{align} 
By using Lemma \ref{lemma:2} with the condition \eqref{proof:thm1eq4}, we have
\begin{align}\label{proof:thm1eq5}
	\lambda_{t+1} &\le \Big(1+ \frac{\lambda_t C_M}{2}\Big) \frac{\sigma_t + \frac{\lambda_t C_M}{2} }{1 + \sigma_t} \lambda_t\le \Big(1+ \frac{\lambda_t C_M}{2}\Big) \Big(\sigma_t + 2 d C_M \lambda_t\Big) \lambda_t.
\end{align}

Consider the term $\sigma_t + 2 d C_M \lambda_t$ in the bound of \eqref{proof:thm1eq5}. We use \textbf{induction} to prove the following statement 
\begin{align}\label{proof:thm1eq6}
	\sigma_t + 2 d C_M \lambda_t \le \Phi_t
\end{align}
for any iteration $t \ge 0$, where
\begin{align}\label{proof:thm1eq7}
	\Phi_t := \prod_{k=1}^{t}\big(1\!-\!\frac{\mu}{\beta_{k, \tau} d L}\big) e^{2(2d + 1) C_M \sum_{k=0}^{t-1}\lambda_k}\frac{dL}{\mu}
\end{align}
For the initial iteration $t=0$, it holds that
\begin{equation}\label{proof:thm1eq8}
	\begin{split}
		\sigma_0 + 2 d C_M \lambda_0 &= {\rm Tr}(\nabla^2 f(\bbx_0)^{-1}\bbB_0) - d + 2 d C_M \lambda_0 \\
		& \le \frac{L}{\mu}{\rm Tr}\big(\nabla^2 f(\bbx_0)^{-1}\nabla^2 f(\bbx_0)\big) - d + 2 d C_M \lambda_0 = \Big(\frac{L}{\mu} - 1\Big) d + 2 d C_M \lambda_0 
	\end{split}
\end{equation}
where the initial condition of $\bbB_0$ is used in the second inequality. By substituting the initial condition of $\lambda_0$ into \eqref{proof:thm1eq8}, we get
\begin{equation}\label{proof:thm1eq85}
	\begin{split}
		\sigma_0 + 2 d C_M \lambda_0 \le \Big(\frac{L}{\mu} - 1\Big) d + \frac{d \ln 2}{2(2d+1)} \le \frac{d L}{\mu}.
	\end{split}
\end{equation}
Thus, \eqref{proof:thm1eq6} holds for $t=0$. Assume that \eqref{proof:thm1eq6} holds for iteration $t \ge 0$, i.e.,
\begin{align}\label{proof:thm1eq9}
	\sigma_{t} + 2 d C_M \lambda_t \le \Phi_t
\end{align}
and consider iteration $t+1$. By substituting \eqref{proof:thm1eq9} into \eqref{proof:thm1eq5} and using the inequality $1+x \le e^x$, we have
\begin{align}\label{proof:thm1eq10}
	\lambda_{t+1} &\le \Big(1+ \frac{\lambda_t C_M}{2}\Big) \Phi_t \lambda_t \le e^{\frac{\lambda_t C_M}{2}} \Phi_t \lambda_t \le e^{2 \lambda_t C_M} \Phi_t \lambda_t.
\end{align}
By using Lemma \ref{lemma:2} that $\phi_t \le \lambda_t$ and Proposition 3, we have 
\begin{align}\label{proof:thm1eq11}
	\sigma_{t+1} &\le \Big(1 - \frac{\mu}{\beta_{t+1,\tau} d L}\Big)(1 + \phi_t C_M)^2\Big(\sigma_t + \frac{2d\phi_t C_M}{1+\phi_t C_M} \Big) \\
	& \le \Big(1 - \frac{\mu}{\beta_{t+1,\tau} d L}\Big)(1 + \lambda_t C_M)^2\Big(\sigma_t + \frac{2d\phi_t C_M}{1+\phi_t C_M} \Big) \nonumber \\
	& \le \Big(1 - \frac{\mu}{\beta_{t+1,\tau} d L}\Big)(1 + \lambda_t C_M)^2\Big(\sigma_t + 2d\phi_t C_M \Big) \le \Big(1 - \frac{\mu}{\beta_{t+1,\tau} d L}\Big)e^{2 C_M \lambda_t} \Phi_t \nonumber
\end{align}
where $\beta_{t+1,\tau}$ is the minimal condition number of the subset $\{\bbe_i\}_{i=1}^\tau$ at iteration $t+1$ and the inequality $(1+x)^2 \le e^{2x}$ is used in the last inequality. Since $1 - \mu/(\beta_{t+1,\tau} d L) \ge 1/2$ with $d \ge 2$, combining \eqref{proof:thm1eq10} and \eqref{proof:thm1eq11} yields
\begin{align}\label{proof:thm1eq12}
	\sigma_{t+1} + 2 d C_M \lambda_{t+1}&\le (1 - \frac{\mu}{\beta_{t+1, \tau} d L})e^{2 C_M \lambda_t} \Phi_t + 2 d C_M e^{2 C_M \lambda_t} \Phi_t \lambda_t \\
	& \le (1 - \frac{\mu}{\beta_{t+1, \tau} d L}) e^{2 C_M \lambda_t} \Phi_t +  (1 - \frac{\mu}{\beta_{t+1, \tau} d L}) 4 d C_M e^{2 C_M \lambda_t} \Phi_t \lambda_t \nonumber\\
	& = (1 - \frac{\mu}{\beta_{t+1, \tau} d L}) e^{2 C_M \lambda_t} (1 + 4 d C_M \lambda_t) \Phi_t. \nonumber
\end{align}
By using the inequality $1+x \le e^x$ and substituting the representation of $\Phi_t$ into \eqref{proof:thm1eq12}, we get
\begin{align}\label{proof:thm1eq13}
	\sigma_{t+1} + 2 d C_M \lambda_{t+1} \le (1 - \frac{\mu}{\beta_{t+1, \tau} d L}) e^{2 C_M (2d + 1) \lambda_t}\Phi_t = \Phi_{t+1}. 
\end{align}
Thus, \eqref{proof:thm1eq6} holds for $t+1$. By combining \eqref{proof:thm1eq85}, \eqref{proof:thm1eq9} and \eqref{proof:thm1eq13}, we prove \eqref{proof:thm1eq6} by induction.

By substituting \eqref{proof:thm1eq6} and the representation of $\Phi_t$ into \eqref{proof:thm1eq10}, we have
\begin{align}\label{proof:thm1eq14}
	\lambda_{t+1} &\le e^{2 \lambda_t C_M} \Phi_t \lambda_t \le e^{2 (2d + 1) \lambda_t C_M} \Phi_t \lambda_t = \frac{1}{\big(1\!-\!\frac{\mu}{\beta_{t+1, \tau} d L}\big)}\Phi_{t+1} \lambda_t.
\end{align}
By using Proposition 2, we get
\begin{align}\label{proof:thm1eq15}
	\Phi_{t+1} &= \prod_{k=1}^{t+1}\big(1\!-\!\frac{\mu}{\beta_{k, \tau} d L}\big) e^{2(2d + 1) C_M \sum_{k=0}^{t}\lambda_k}\frac{dL}{\mu} \\
	&\le \prod_{k=1}^{t+1}\big(1\!-\!\frac{\mu}{\beta_{k, \tau} d L}\big) e^{2(2d + 1) C_M \sum_{k=0}^{t}\big(1-\frac{\mu}{2L}\big)^k\lambda_0}\frac{dL}{\mu} \nonumber 
\end{align}
From the fact that $\sum_{k=0}^{t}\big(1-\frac{\mu}{2L}\big)^k \le 2L/\mu$ and the initial condition, we have
\begin{align}\label{proof:thm1eq16}
	\Phi_{t+1}  \le \prod_{k=1}^{t+1}\big(1\!-\!\frac{\mu}{\beta_{k, \tau} d L}\big) e^{\ln 2}\frac{dL}{\mu} = \prod_{k=1}^{t+1}\big(1\!-\!\frac{\mu}{\beta_{k, \tau} d L}\big) \frac{2 dL}{\mu}. 
\end{align}
Substituting \eqref{proof:thm1eq16} into \eqref{proof:thm1eq14} yields
\begin{align}\label{proof:thm1eq17}
	\lambda_{t+1} &\le \prod_{k=1}^{t}\big(1\!-\!\frac{\mu}{\beta_{k, \tau} d L}\big) \frac{2 dL}{\mu} \lambda_t.
\end{align}
Let $t_0$ be such that
\begin{align}
	\prod_{k=1}^{t_0}\big(1\!-\!\frac{\mu}{\beta_{k, \tau} d L}\big) \frac{2 dL}{\mu} \le 1
\end{align}
and we have
\begin{align}\label{proof:thm1eq18}
	\lambda_{t+t_0+1} &\le \prod_{k=1}^{t+t_0}\big(1\!-\!\frac{\mu}{\beta_{k, \tau} d L}\big) \frac{2 dL}{\mu} \lambda_{t+t_0} \le \prod_{k=t_0+1}^{t+t_0}\big(1\!-\!\frac{\mu}{\beta_{k, \tau} d L}\big) \lambda_{t+t_0}
\end{align}
for any $t \ge 0$. By using \eqref{proof:thm1eq18} recursively, we get
\begin{align}\label{proof:thm1eq19}
	\lambda_{t+t_0+1} &\le \prod_{k=t_0+1}^{t+t_0}\big(1\!-\!\frac{\mu}{\beta_{k, \tau} d L}\big) \lambda_{t+t_0} \le \cdots \le \prod_{k=t_0+1}^{t+t_0}\big(1\!-\!\frac{\mu}{\beta_{k, \tau} d L}\big)^{t+t_0+1 - k} \lambda_{t_0}. 
\end{align}
By further using Proposition 2, we complete the proof
\begin{align}\label{proof:superlineareq20}
	\lambda_{t+t_0+1} \le \prod_{k=t_0+1}^{t+t_0}\big(1\!-\!\frac{\mu}{\beta_{k, \tau} d L}\big)^{t+t_0+1 - k} \Big(1\!-\!\frac{\mu}{2L}\Big)^{t_0}  \lambda_{0}.
\end{align}

\section{Proof of Theorem 2}

We start by noting that for any vector $\bbe_i$ and matrix $\bbE$, we have
\begin{align}\label{proof:coro1eq1}
	\lambda_1 \le \bbe_i^\top \bbE \bbe_i \le \lambda_d
\end{align}
where $\lambda_1$ and $\lambda_d$ are the minimal and maximal eigenvalues of $\bbE$. From Definition 1, the relative condition number of $\bbe_i$ w.r.t. $\bbE$ is bounded as
\begin{align}\label{proof:coro1eq2}
	\beta(\bbe_i) = \frac{\max_{1 \le k \le d} \bbe_k^\top \bbE \bbe_k}{\bbe_i^\top \bbE \bbe_i} \le \frac{\lambda_d}{\lambda_1} = \beta
\end{align}
where $\beta$ is the condition number of $\bbE$. By using \eqref{proof:coro1eq2} together with the corollary condition, we have 
\begin{align}\label{proof:coro1eq3}
	\beta_{t, \tau} \le C_\beta
\end{align}
or
\begin{align}\label{proof:coro1eq4}
	\beta_{t, \tau} = \min_{1\le i \le \tau} \beta(\bbe_i) \le \beta_t \le C_\beta
\end{align}
where $\beta_t$ is the condition number of the approximation error matrix at iteration $t$. By further using \eqref{proof:coro1eq3} or \eqref{proof:coro1eq4} in the result of Theorem 1, we have
\begin{align}\label{proof:coro1eq5}
	\lambda_{t+t_0+1} \le \big(1\!-\!\frac{\mu}{C_\beta d L}\big)^{\frac{t(t+1)}{2}} \Big(1\!-\!\frac{\mu}{2L}\Big)^{t_0}  \lambda_{0}
\end{align}
which completes the proof.

\section{Bound on Condition Number $\beta_t$}

We establish an upper bound on the condition number $\beta_{t}$ of the error matrix $\hat{\bbB}_t - \nabla^2 f(\bbx_{t+1})$ with a minor modification in the correction strategy in Section 3.1. Specifically, we consider the correction strategy on the Hessian approximation $\bbB_t$ as
\begin{align}\label{eq:modifiedCorrection}
    \hat{\bbB}_t \!=\! \big(1\!+\! (\phi_t C_M + \delta_t)\big)\bbB_t
\end{align}
where $\delta_t = q^t 
\delta_0 > 0$ with $\delta_0$ a positive constant and $0<q<1$ a contraction factor. Since $(1\!+\! \phi_t C_M)\bbB_t \succeq \nabla^2 f(\bbx_{t+1})$ from \eqref{eq:lemma31} and $\bbB_t \succeq \nabla^2 f(\bbx_{t})$ from \eqref{eq:lemma32} in Appendix \ref{appendixB}, we obtain
\begin{align}\label{eq:modifiedCorrection1}
    \hat{\bbB}_t - \nabla^2 f(\bbx_{t+1}) \succeq \delta_t \bbB_t \succeq \delta_t \nabla^2 f(\bbx_{t}).
\end{align}
By using Assumption 1 with $L \bbI \succeq \nabla^2 f(\bbx_{t}) \succeq \mu \bbI$ in \eqref{eq:modifiedCorrection1}, we get
\begin{align}\label{eq:lower}
    \hat{\bbB}_t - \nabla^2 f(\bbx_{t+1}) \succeq \delta_t \mu \bbI.
\end{align}
From \eqref{proof:prop2eq0} in Appendix \ref{appendixB}, we have 
\begin{align}\label{eq:modifiedCorrection2}
    \bbB_t \preceq \eta_t \nabla^2 f(\bbx_{t})
\end{align}
where $\eta_t$ is now defined as
\begin{align}\label{eq:modifiedeta}
    \eta_t = e^{2C_M \sum_{k=0}^{t-1}\lambda_k + 2 \sum_{k=0}^{t-1} \delta_t}\frac{L}{\mu}
\end{align}
with the modified correction strategy [cf. \eqref{eq:modifiedCorrection}]. Since both $\{\lambda_t\}_t$ and $\{\delta_t\}_t$ are decreasing geometric sequences, $\eta_t$ is upper bounded by a constant $C_\eta$. By using this fact in \eqref{eq:modifiedCorrection2} and the latter in \eqref{eq:modifiedCorrection}, we have
\begin{align}\label{eq:upper}
    \hat{\bbB}_t \preceq \big(1\!+\! (\phi_t C_M + \delta_t)\big) C_\eta \nabla^2 f(\bbx_{t}) \preceq \big(1\!+\! (\lambda_0 C_M + \delta_0)\big) C_\eta L \bbI
\end{align}
where $\lambda_t \le \lambda_0$, $\delta_t \le \delta_0$ and $\mu \bbI \preceq \nabla^2 f(\bbx_{t}) \preceq L \bbI$ are used in the second inequality. By using \eqref{eq:lower} and \eqref{eq:upper}, we can bound the condition number $\beta_t$ of $\hat{\bbB}_t - \nabla^2 f(\bbx_{t+1})$ as
\begin{align}\label{eq:upperBound}
     \beta_t \le \frac{\big(2\!+\! (\lambda_0 C_M + \delta_0)\big) C_\eta L}{q^t 
     \delta_0 \mu} = C_{t,\beta}
\end{align}
at iteration $t$, where $\lambda_0$ and $\delta_0$ are initial constants. We remark that the upper bound in \eqref{eq:upperBound} is the worst-case analysis because it holds for the condition number $\beta_t$, i.e., the minimal relative condition number $\beta_{t,1}$ with memory size $\tau = 1$ [Def. 1]. Therefore, it is important to note that this bound may not be tight and the actual value of $\beta_{t,\tau}$ could be smaller.

\begin{table}
\caption{Details of datasets: svmguide3, connect-4, protein and mnist.}
\label{tab:dataset}
\centering
\begin{tabular}{llllll}
	\toprule
	Dataset & Number of samples $N$ & Feature dimension $d$ & Regularization parameters $\mu$ \\
	\midrule
	Svmguide3 & ~~~~~~~~~~~~1243 & ~~~~~~~~~~~~21 & ~~~~~~~~~~~~~~~~$10^{-4}$ \\
	Connect-4 & ~~~~~~~~~~~~67557 & ~~~~~~~~~~~~126 & ~~~~~~~~~~~~~~~~$10^{-4}$ \\
	Protein & ~~~~~~~~~~~~17766 & ~~~~~~~~~~~~357 & ~~~~~~~~~~~~~~~~$10^{-4}$ \\
	Mnist & ~~~~~~~~~~~~60000 & ~~~~~~~~~~~~780 & ~~~~~~~~~~~~~~~~$10^{-6}$ \\
	\bottomrule
\end{tabular}
\end{table}

\begin{figure}[t]
\centering
\begin{subfigure}{0.42\linewidth}
	\includegraphics[width=\linewidth]{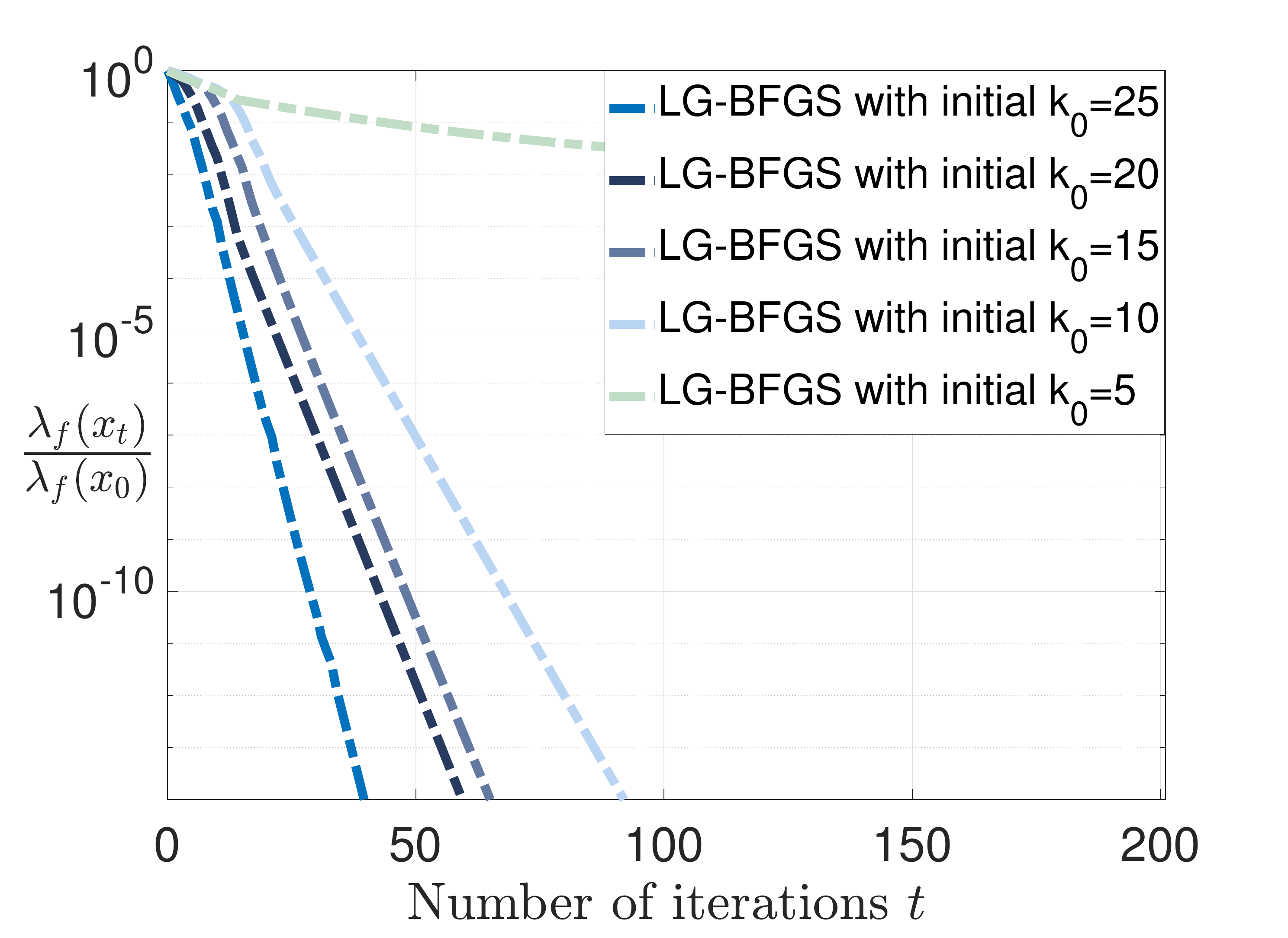}
	\caption{Svmguide3 dataset}
\end{subfigure}
\begin{subfigure}{0.42\linewidth}
	\includegraphics[width=\linewidth]{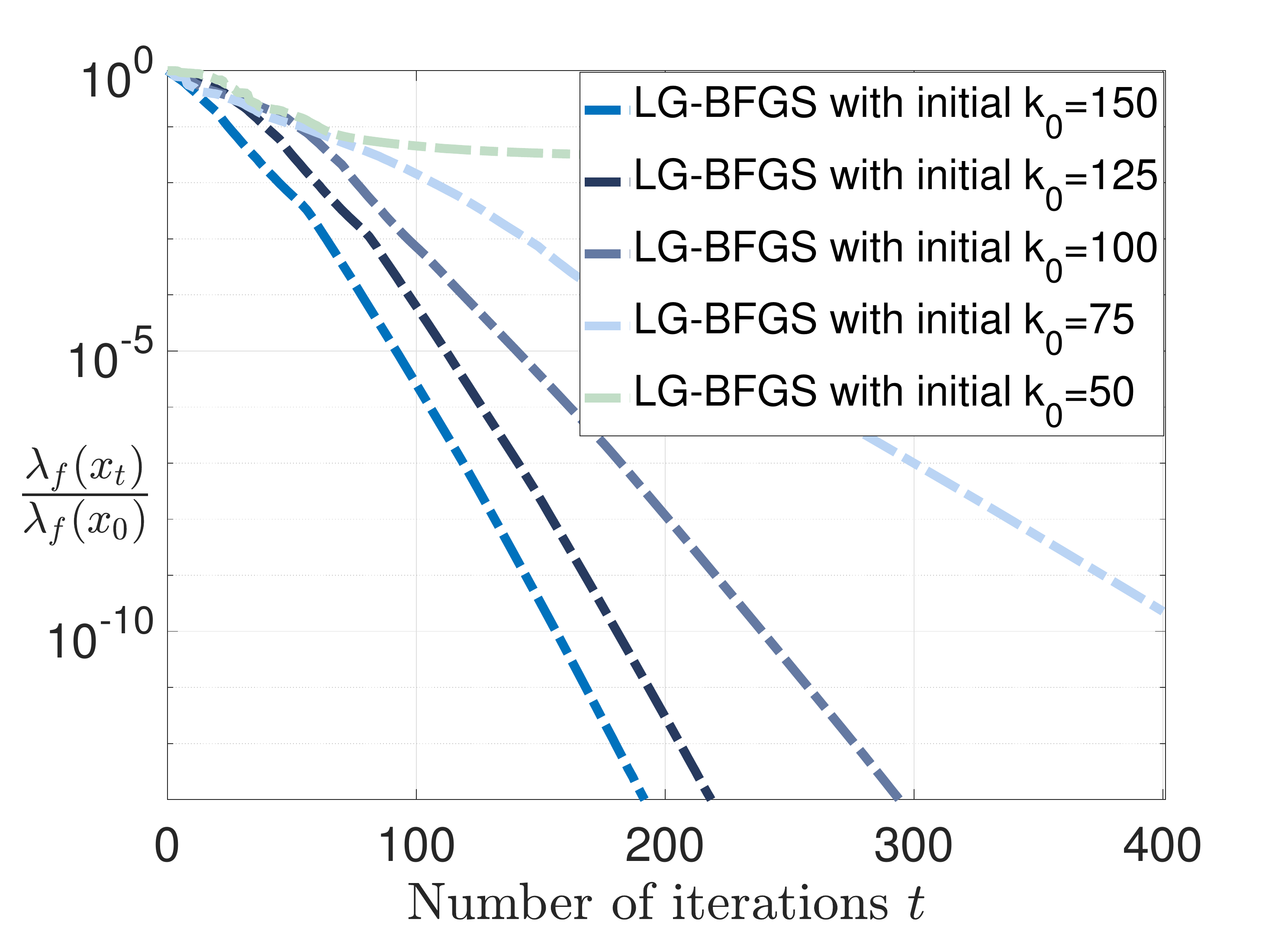}
	\caption{Connect-4 dataset}
\end{subfigure}
\begin{subfigure}{0.42\linewidth}
	\includegraphics[width=\linewidth]{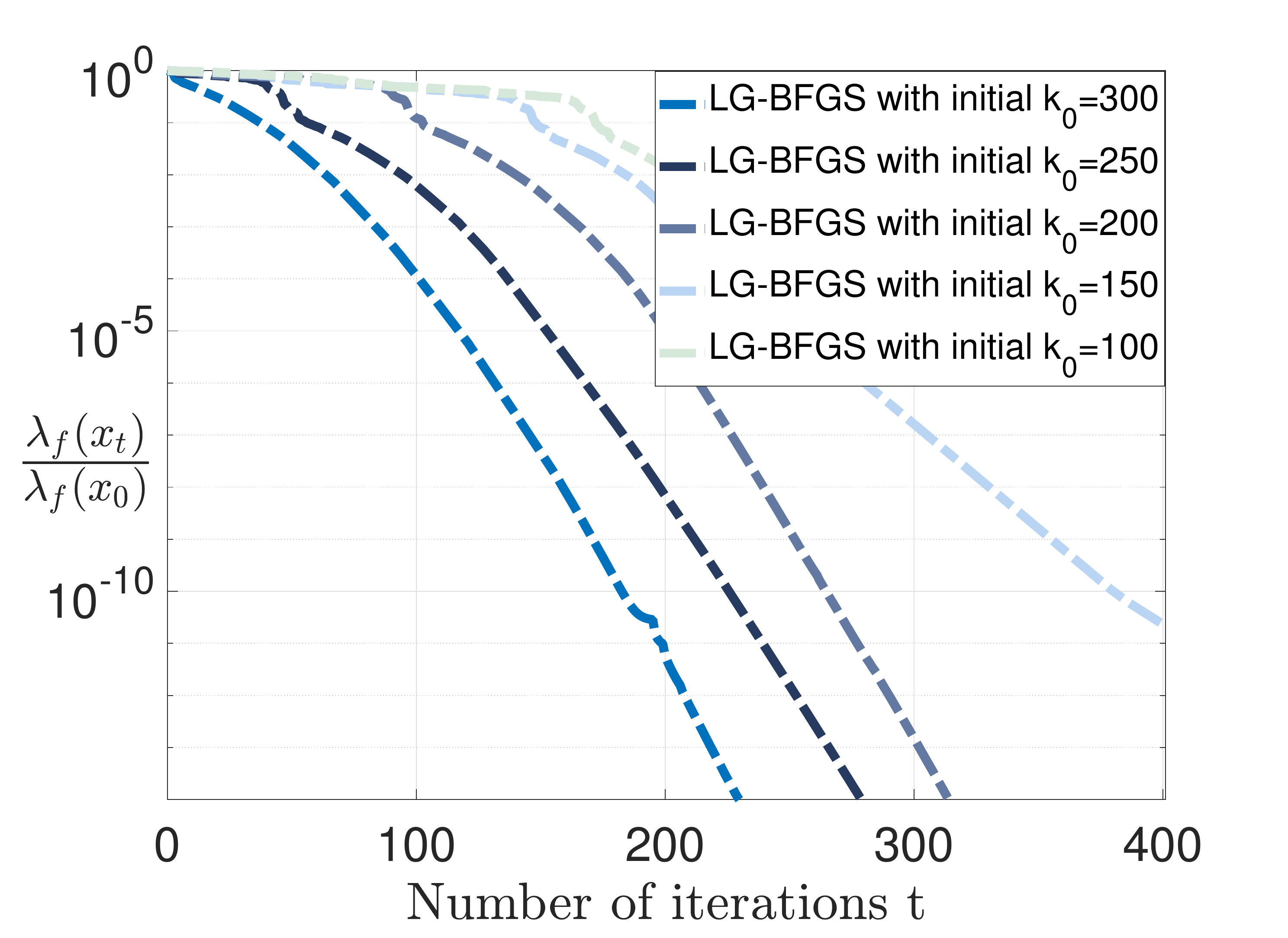}
	\caption{Protein dataset}
\end{subfigure}
\begin{subfigure}{0.42\linewidth}
	\includegraphics[width=\linewidth]{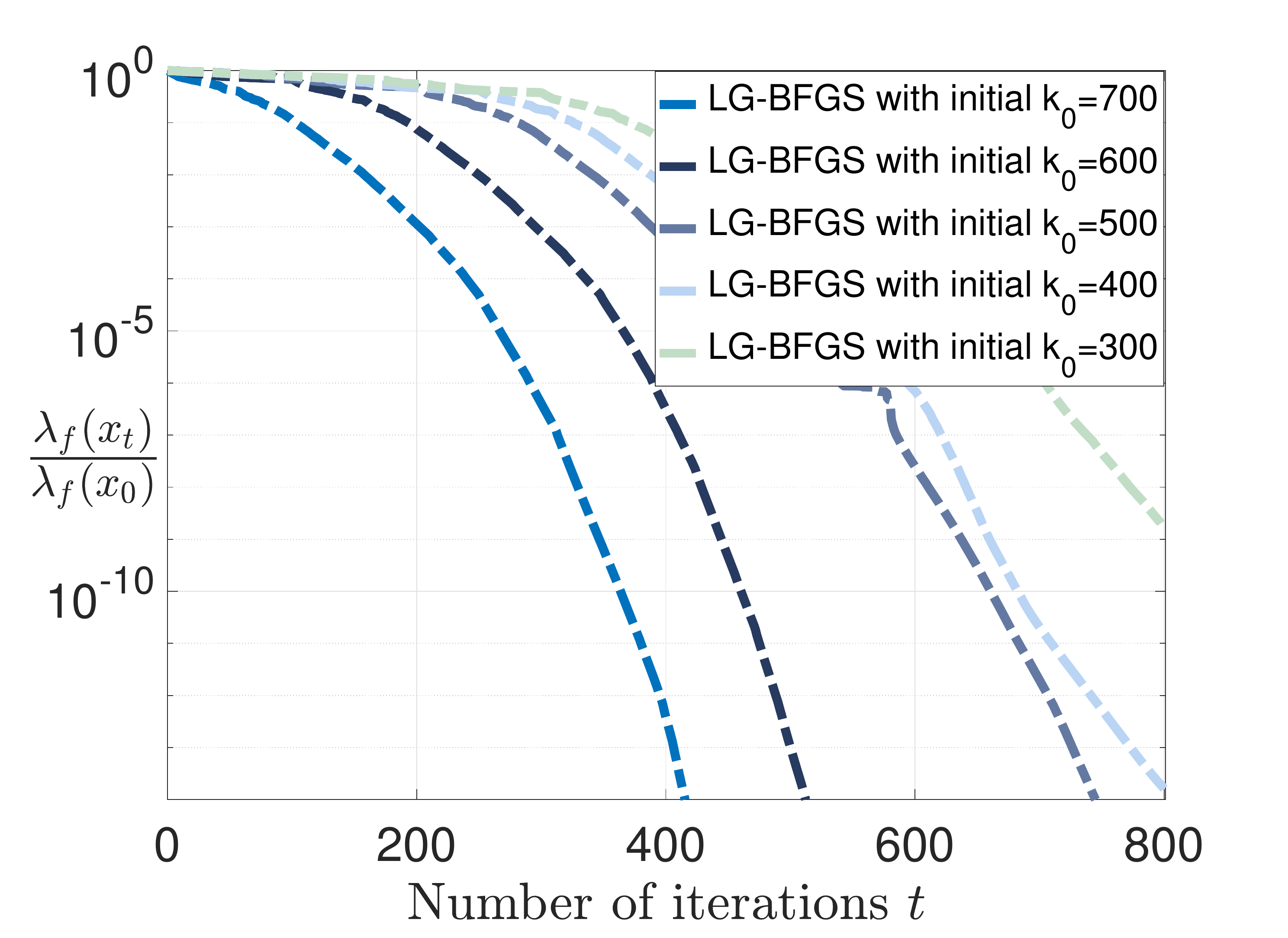}
	\caption{Mnist dataset}
\end{subfigure}
\caption{Performance of LG-BFGS with different initialization on four datasets.}
\label{fig:2}
\end{figure}

By following similar steps as in the proofs of Proposition 2 - Theorem 2, we can establish an explicit convergence rate of LG-BFGS as
\begin{equation}\label{eq:modifiedCorrection3}
		\lambda_f(\bbx_{t+t_0+1}) \le \prod_{u=t_0+1}^{t+t_0}\Big(1\!-\!\frac{\mu}{C_{u,\beta}dL}\Big)^{t+t_0+1-u} 
  \Big(1\!-\!\frac{\mu}{2L}\Big)^{t_0} \lambda_f(\bbx_{0})
\end{equation}
where the modified correction strategy may require a slightly more accurate initialization to derive this rate. Since this upper bound $C_{t,\beta}$ is not constant but increases with iteration $t$ [cf. \eqref{eq:upperBound}], the convergence rate in \eqref{eq:modifiedCorrection3} is slower than that in Theorem 2. Specifically, we can represent $C_{t, \beta}$ in \eqref{eq:upperBound} as the form of
\begin{align}\label{eq:modifiedCorrection4}
    C_{t,\beta} = C_\beta q^{-t}
\end{align}
where 
$C_\beta$ is a constant. By substituting \eqref{eq:modifiedCorrection4} into \eqref{eq:modifiedCorrection3}, we get
\begin{equation}\label{eq:modifiedCorrection5}
		\lambda_f(\bbx_{t+t_0+1}) \le \prod_{u=t_0+1}^{t+t_0}\Big(1\!-\!\frac{\mu}{C_{\beta}dL}q^u\Big)^{t+t_0+1-u} 
  \Big(1\!-\!\frac{\mu}{2L}\Big)^{t_0} \lambda_f(\bbx_{0}).
\end{equation}
We can approximate \eqref{eq:modifiedCorrection5} as
\begin{align}\label{eq:modifiedCorrection6}
    &\prod_{u=t_0+1}^{t+t_0}\Big(1\!-\!\frac{\mu}{C_{\beta}dL}q^u\Big)^{t+t_0+1-u} 
  \Big(1\!-\!\frac{\mu}{2L}\Big)^{t_0} \lambda_f(\bbx_{0}) \\
  &\approx e^{-\sum_{u=t_0+1}^{t+t_0} \frac{\mu}{C_\beta d L} (t+t_0+1-u) q^u} \Big(1\!-\!\frac{\mu}{2L}\Big)^{t_0} \lambda_f(\bbx_{0}) \nonumber \\
    & = e^{-\frac{q^{t_0+1}\mu}{C_\beta d L} \sum_{u=0}^{t-1} (t-u) q^u} \Big(1\!-\!\frac{\mu}{2L}\Big)^{t_0} \lambda_f(\bbx_{0}) \le e^{- C t} \Big(1\!-\!\frac{\mu}{2L}\Big)^{t_0} \lambda_f(\bbx_{0}) \nonumber
\end{align}
where $C=q^{t_0+1}\mu/(C_\beta d L)$ is a constant. By combining \eqref{eq:modifiedCorrection6} and the result in Proposition 2, we have
\begin{equation}\label{eq:modifiedCorrection7}
		\lambda_f(\bbx_{t+t_0+1}) \le \min \Big\{ e^{- C t} \Big(1-\frac{\mu}{2L}\Big)^{t_0} \lambda_f(\bbx_{0}), \Big(1-\frac{\mu}{2L}\Big)^{t+t_0+1} \lambda_f(\bbx_{0}) \Big\}.
\end{equation}
This can be considered as an improved linear rate 
depending on specific problem settings.

\section{Additional Experiments} 

\begin{figure}[t]
\centering
\begin{subfigure}{0.42\linewidth}
	\includegraphics[width=\linewidth]{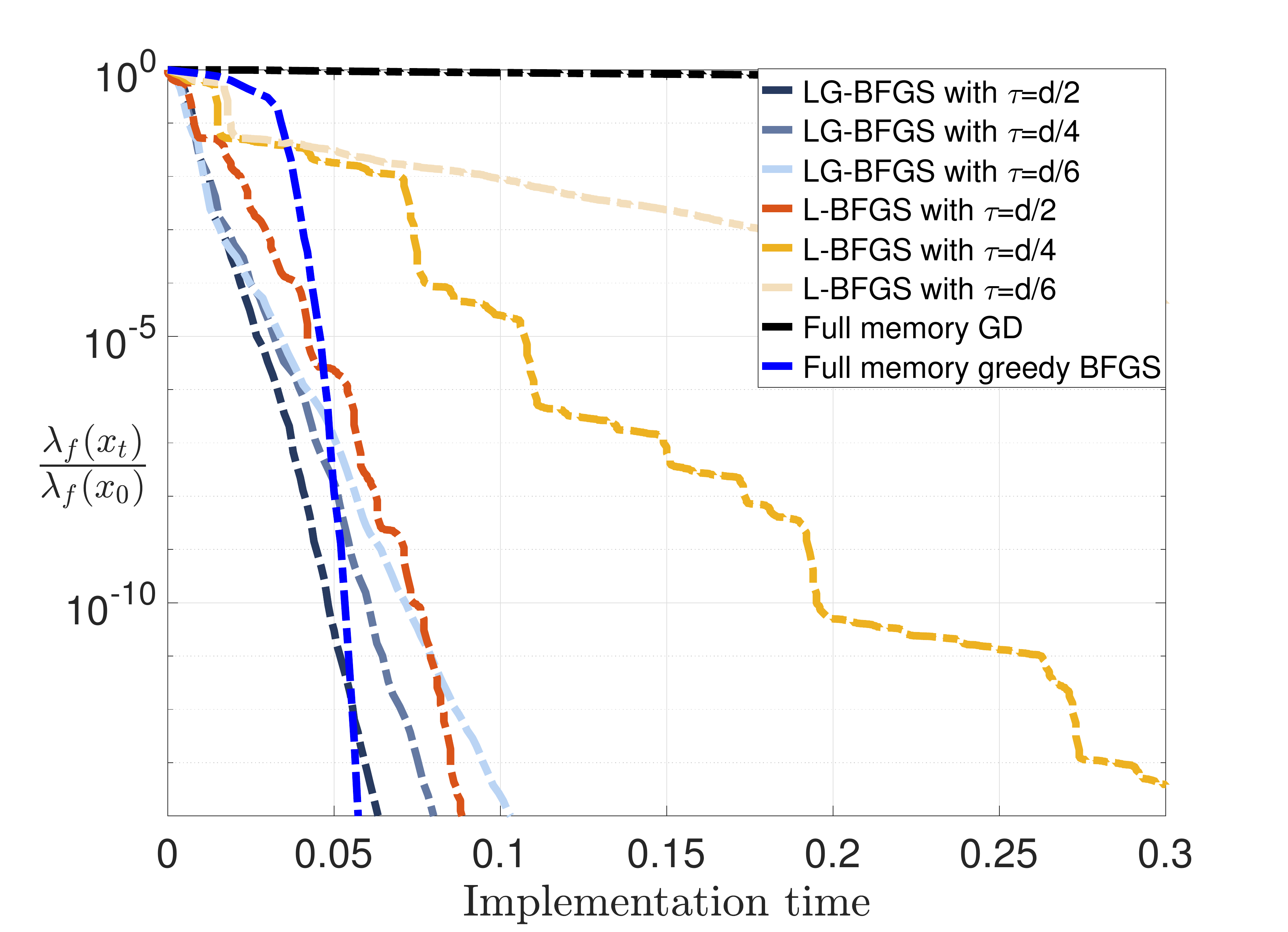}
	\caption{Svmguide3 dataset}
\end{subfigure}
\begin{subfigure}{0.42\linewidth}
	\includegraphics[width=\linewidth]{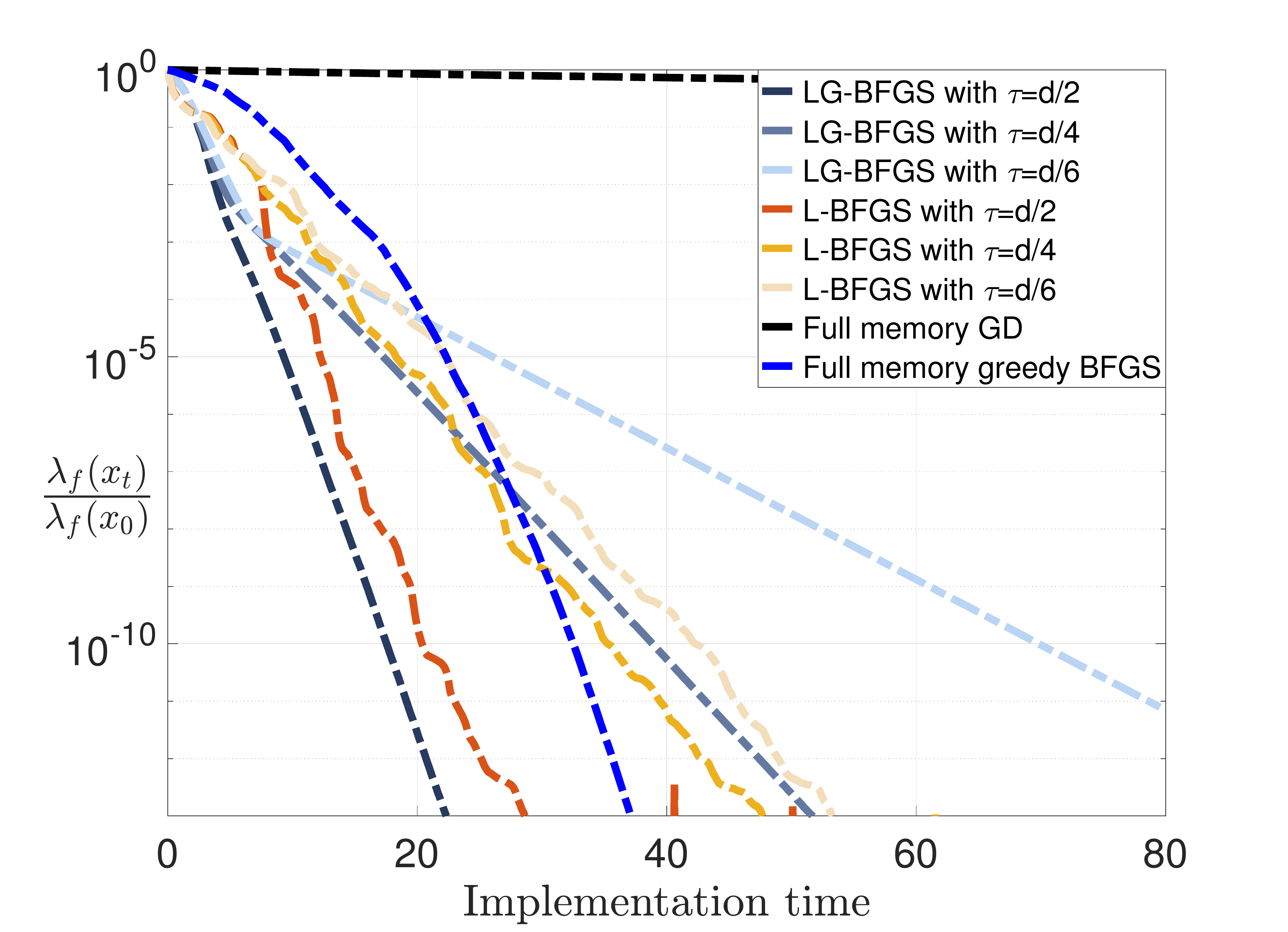}
	\caption{Connect-4 dataset}
\end{subfigure}
\begin{subfigure}{0.42\linewidth}
	\includegraphics[width=\linewidth]{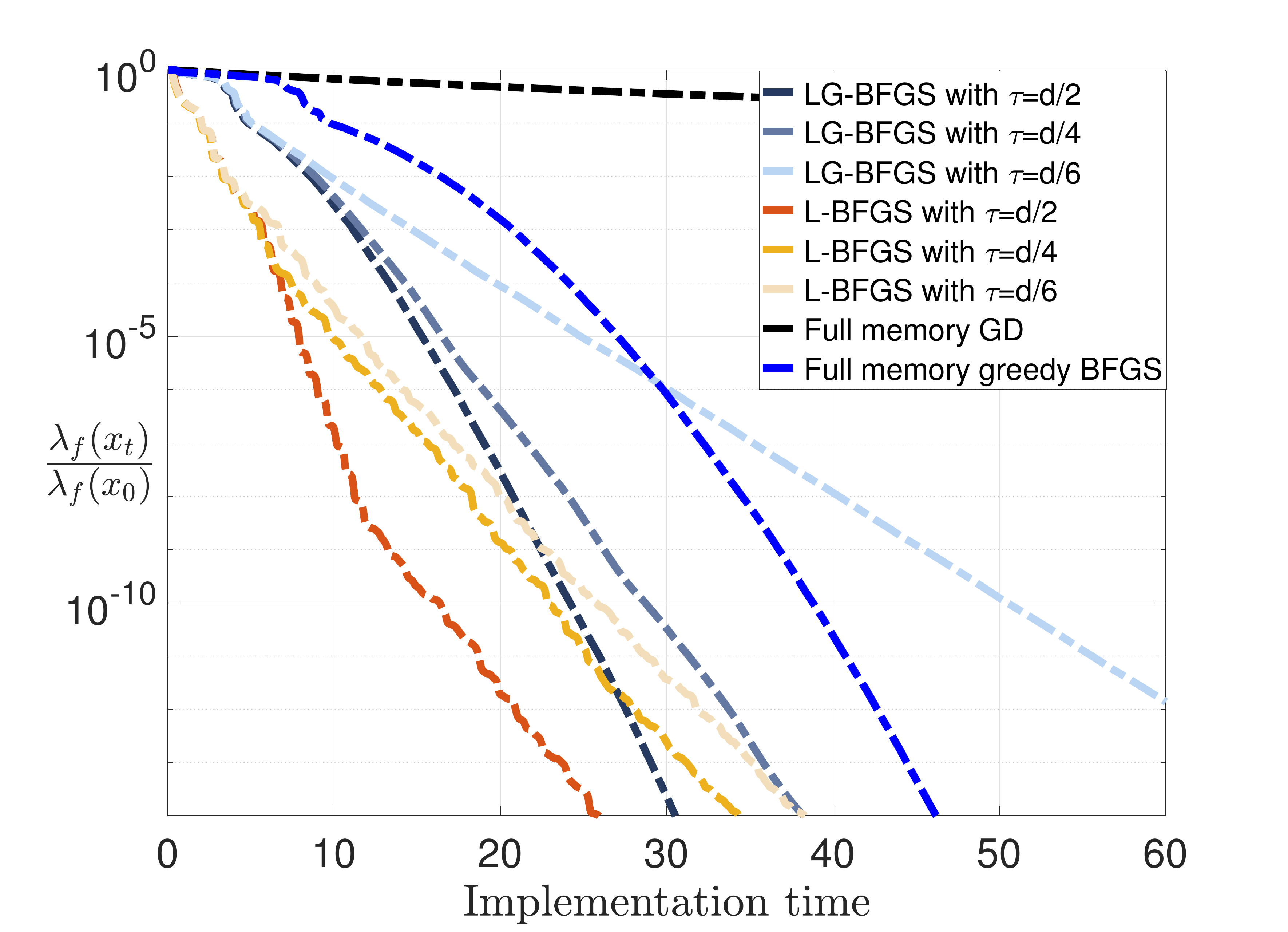}
	\caption{Protein dataset}
\end{subfigure}
\begin{subfigure}{0.42\linewidth}
	\includegraphics[width=\linewidth]{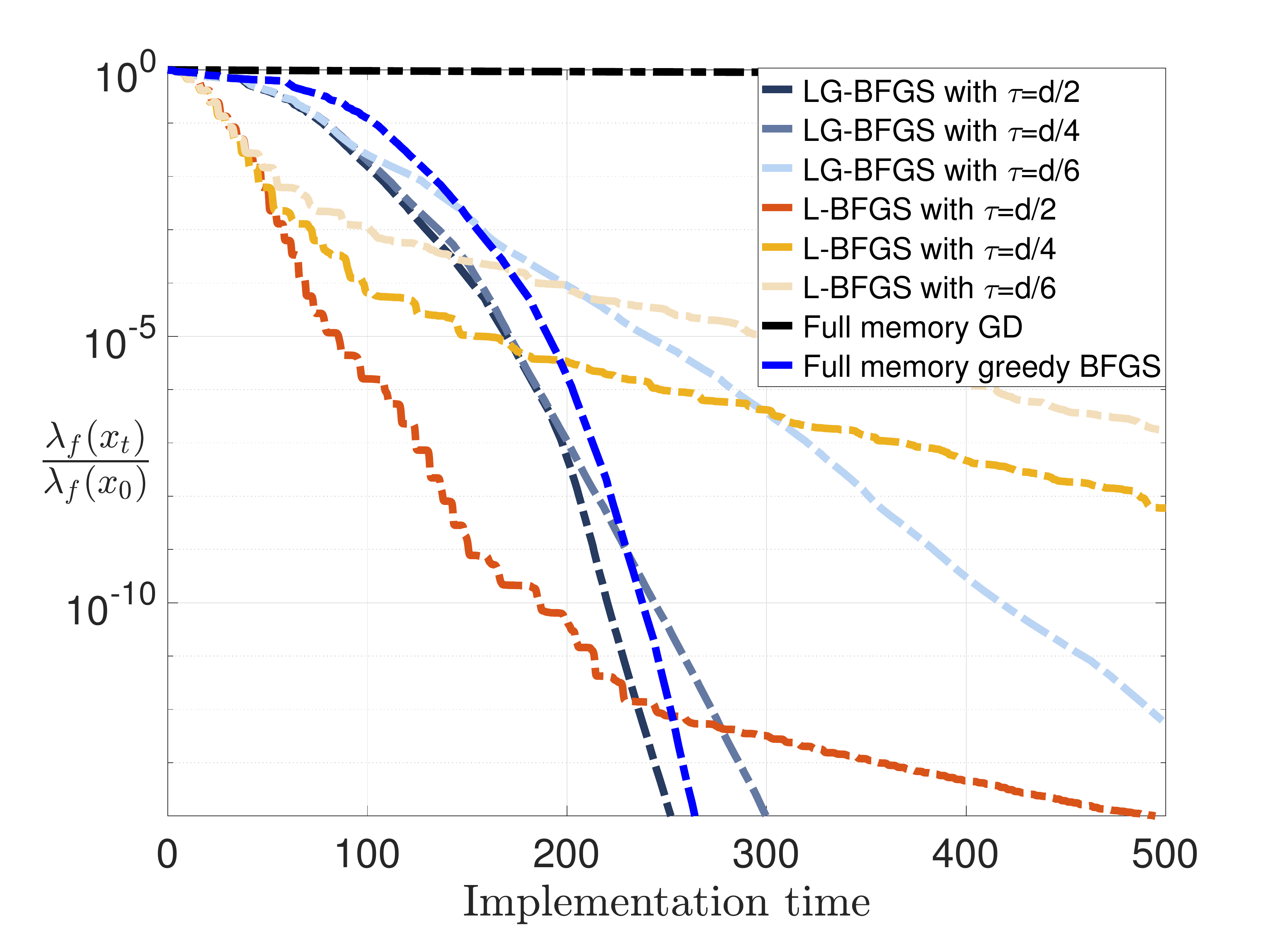}
	\caption{Mnist dataset}
\end{subfigure}
\caption{Performance of LG-BFGS, L-BFGS, and greedy BFGS over implementation time on four datasets. We consider different memory sizes for LG-BFGS and L-BFGS.}
\label{fig:3}
\end{figure} 

We consider four datasets: svmguide3, connect-4, protein and mnist for classification problem, details of which are summarized in Table \ref{tab:dataset}. With the local nature of superlinear convergence results for quasi-Newton methods, we construct a setup with a warm start for all methods, i.e., the initialization is close to the solution by performing greedy BFGS for $k_0$ iterations. This has the practical effect of reducing the superlinear phase triggering time -- see \citep{jin2022sharpened} for further details. Also worth mentioning is that we found it is better not to apply the correction strategy in LG-BFGS and greedy BFGS methods in practice following \citep{rodomanov2021greedy,lin2021greedy,jin2022sharpened}, i.e., simply set $\tilde{\bbr}_u = \bbr_u$ in step 2 of Algorithm 1 for the displacement step of LG-BFGS and $\hat{\bbB}_t = \bbB_t$ in the Hessian approximation update of greedy BFGS.

Fig. \ref{fig:2} evaluates LG-BFGS with different initialization. We see that the performance of LG-BFGS increases with the improvement of initialization in all experiments. This relationship is expected because (i) the superlinear convergence of LG-BFGS is a local result; and (ii) the subset $\{\bbe_i\}_{i=1}^\tau$ being selected from good initialization roughly ensures the update progress of the Hessian approximation associated with the sparse subspace, i.e., the minimal relative condition number $\beta_\tau$ w.r.t. the approximation error matrix in (17) is small. These aspects manifest in the improved 
convergence 
of LG-BFGS, which corroborate our theoretical findings in Section 4.

Fig. \ref{fig:3} shows the convergence of LG-BFGS, L-BFGS, greedy BFGS and GD as a function of implementation time. For greedy BFGS, it has the fastest convergence rate (per iteration) but requires the most computational cost, 
which slows its convergence in datasets of connect-4, protein and mnist. For L-BFGS, it requires the lowest computational cost but has the slowest convergence rate, which exhibits bad performance with small memory sizes in datasets of svmguide3 and mnist. LG-BFGS strikes a balance between convergence rate of greedy BFGS and computational cost of L-BFGS, i.e., it requires less computational cost than the former and obtains a faster convergence rate than the latter, corresponding to our discussions in Section 5. A final comment is that LG-BFGS and L-BFGS require less storage memory $\ccalO(\tau d)$ than that required by greedy BFGS $\ccalO(d^2)$. 

\end{document}